\newtheorem{prop}{Proposition}
\newtheorem{thm}[prop]{Theorem}
\newtheorem{cor}[prop]{Corollary}
\newtheorem{conj}[prop]{Conjecture}
\newtheorem{lem}[prop]{Lemma}
\theoremstyle{definition}
\newtheorem{que}[prop]{Question}
\newtheorem{defn}[prop]{Definition}
\newtheorem{rem}[prop]{\it Remark}
\numberwithin{equation}{section}
\newcommand{\bC}{\mathbb{C}}
\newcommand{\bR}{\mathbb{R}}
\newcommand{\bA}{\mathbb{A}}
\newcommand{\bQ}{\mathbb{Q}}
\newcommand{\bZ}{\mathbb{Z}}
\newcommand{\bN}{\mathbb{N}}
\newcommand{\bk}{\mathbbm{k}}
\newcommand{\bK}{\mathbb{K}}
\newcommand{\cX}{\mathcal{X}}
\newcommand{\cY}{\mathcal{Y}}
\newcommand{\cO}{\mathcal{O}}
\newcommand{\cL}{\mathcal{L}}
\newcommand{\cI}{\mathcal{I}}
\newcommand{\cJ}{\mathcal{J}}
\newcommand{\cE}{\mathcal{E}}
\newcommand{\cD}{\mathcal{D}}
\newcommand{\cH}{\mathcal{H}}
\newcommand{\cS}{\mathcal{S}}
\newcommand{\cZ}{\mathcal{Z}}
\newcommand{\fa}{\mathfrak{a}}
\newcommand{\fb}{\mathfrak{b}}
\newcommand{\fc}{\mathfrak{c}}
\newcommand{\fm}{\mathfrak{m}}
\newcommand{\Spec}{\mathrm{Spec}}
\newcommand{\Supp}{\mathrm{Supp}}
\newcommand{\lct}{\mathrm{lct}}
\newcommand{\hvol}{\widehat{\mathrm{vol}}}
\newcommand{\ord}{\mathrm{ord}}
\newcommand{\Val}{\mathrm{Val}}
\newcommand{\e}{\mathrm{e}}
\newcommand{\vol}{\mathrm{vol}}
\newcommand{\depth}{\mathrm{depth}}
\newcommand{\rom}[1]{\lowercase\expandafter{\romannumeral #1\relax}}
\DeclareMathOperator{\Jac}{Jac}
\newcommand{\ab}{\mathfrak{a}_\bullet}
\newcommand{\cXt}{{\cX_t}}
\newcommand{\cXgt}{{\cX_{\gt}}}
\newcommand{\gt}{\overline{t}}
\newcommand{\cDt}{\cD_t}
\newcommand{\cDgt}{\cD_{\gt}}
\newcommand{\st}{\sigma(t)}
\newcommand{\sgt}{\sigma(\gt)}
\DeclareMathOperator{\hs}{e}
\DeclareMathOperator{\sing}{sing}
\DeclareMathOperator{\nvol}{\widehat{vol}}
\DeclareMathOperator{\HS}{H}
\DeclareMathOperator{\gr}{gr}
\DeclareMathOperator{\Mod}{Mod}
\DeclareMathOperator{\Hilb}{Hilb}
\begin{document}

\title[normalized volume of a singularity is lower semicontinuous]
{The normalized volume of a singularity is lower semicontinuous}
\author{Harold Blum}

\address{Department of Mathematics\\
  University of Michigan\\
  Ann Arbor, MI 48109--1043\\
  USA}
\email{blum@umich.edu}

\author{Yuchen Liu}
\address{Department of Mathematics, Yale University, New Haven, CT 06511, USA.}
\email{yuchen.liu@yale.edu}

\date{\today}

\begin{abstract}
We show that in any $\bQ$-Gorenstein flat family
of klt singularities, normalized volumes
are lower semicontinuous with respect to the Zariski topology.
A quick consequence is that smooth points 
have the largest normalized volume among 
all klt singularities. Using an alternative
characterization of K-semistability developed
by Li, Liu and Xu, we show that K-semistability
is a very generic or empty condition in any $\bQ$-Gorenstein
flat family of log Fano pairs.
\end{abstract}

\maketitle

\section{Introduction}

Given an $n$-dimensional complex klt singularity $(x\in (X,D))$, Chi Li \cite{Li15a} 
introduced the \emph{normalized volume} function on the space $\Val_{x,X}$ of real valuations of $\bC(X)$ centered at $x$. 
More precisely, for any such valuation $v$, its normalized volume
is defined as $\hvol_{x,(X,D)}(v):=A_{X,D}(v)^n\vol(v)$, where $A_{X,D}(v)$
is the log discrepancy of $v$ with respect to $(X,D)$ according to \cite{JM12, BdFFU15}, and 
$\vol(v)$ is the volume of $v$ according to \cite{ELS03}. 
Then we can define the \emph{normalized volume of a klt singularity}
$(x\in (X,D))$ by 
\[
 \hvol(x,X,D):=\min_{v\in\Val_{x,X}}\hvol_{x,(X,D)}(v)
\]
where the existence of minimizer of $\hvol$ was
shown recently in \cite{Blu16}. We also denote
$\hvol(x,X):=\hvol(x,X,0)$.

The normalized volume of a klt singularity
$x\in (X,D)$ carries interesting information of its geometry 
and topology. It was shown by the second author and Xu 
that $\hvol(x,X,D)\leq n^n$ and equality holds
if and only if $(x\in X\setminus \Supp(D))$ is
smooth (see \cite[Theorem A.4]{LiuX17} or Theorem \ref{maxhvol}).
By \cite{Xu14} the local algebraic fundamental group $\hat{\pi}_1^{\mathrm{loc}}(X,x)$
of a klt singularity $x\in X$ is always finite.
Moreover, assuming the conjectural finite degree
formula of normalized volumes \cite[Conjecture 4.1]{LiuX17}, the size of $\hat{\pi}_1^{\mathrm{loc}}(X,x)$ is bounded from above by 
$n^n/\hvol(x,X)$ (see Remark \ref{r_localpi1}).
If $X$ is a Gromov-Hausdorff
limit of K\"ahler-Einstein Fano 
manifolds, then Li and Xu \cite{LX17} showed that $\hvol(x,X)=n^n\cdot\Theta(x,X)$ 
where $\Theta(x,X)$ is the volume density 
of a closed point $x\in X$ (see \cite{HS16, SS17} for 
background materials).
\medskip

In this article, it is shown that the normalized volume of a singularity  is lower semicontinuous in families.

\begin{thm}\label{mainthm}
Let $\pi:(\cX,\cD)\to T$ together with a section $\sigma: T\to \cX$ be a
 $\bQ$-Gorenstein flat family of complex klt singularities over a normal variety $T$.
Then the function $t\mapsto\hvol(\sigma(t),\cX_t,\cD_t)$
on $T(\bC)$ is lower semicontinuous with respect
to the Zariski topology.
\end{thm}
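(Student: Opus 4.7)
The plan is to reduce the problem to a specialization along a smooth curve and then apply C.~Liu's ideal-theoretic characterization of $\hvol$. By constructibility, lower semicontinuity of the function $t\mapsto \hvol(\sigma(t),\cX_t,\cD_t)$ on $T$ is equivalent to its stability under specialization, which in turn reduces to the case where $T$ is a smooth affine curve with closed point $0$ and generic point $\eta$; the goal becomes
\[
\hvol(\sigma(0),\cX_0,\cD_0)\le \hvol(\sigma(\eta),\cX_\eta,\cD_\eta).
\]

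Next I invoke Liu's formula $\hvol(x,X,D)=\inf_{\fa}\lct(X,D;\fa)^n\cdot\hs(\fa)$, the infimum taken over $\fm_x$-primary ideals $\fa$. For any $\varepsilon>0$, I pick $\fa_\eta\subset \cO_{\cX_\eta,\sigma(\eta)}$ realizing this infimum up to $\varepsilon$. After shrinking $T$, I spread $\fa_\eta$ out to a coherent ideal $\tilde\fa\subset \cO_\cX$ with $\cO_\cX/\tilde\fa$ flat over $T$ --- for example the schematic closure of $V(\fa_\eta)\subset \cX_\eta$ in $\cX$ --- and set $\fa_0:=\tilde\fa\cdot \cO_{\cX_0}$, which is automatically $\fm_{\sigma(0)}$-primary because set-theoretically $V(\tilde\fa)\subset \sigma(T)$. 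Lower semicontinuity of log canonical thresholds in families (Musta\c{t}\u{a}, Demailly--Koll\'ar, and others) gives
\[
\lct(\cX_0,\cD_0;\fa_0)\le \lct(\cX_\eta,\cD_\eta;\fa_\eta).
\]
If I can further establish the multiplicity comparison $\hs(\fa_0)\le \hs(\fa_\eta)$, then
\[
\hvol(\cX_0)\le \lct(\fa_0)^n\hs(\fa_0)\le \lct(\fa_\eta)^n\hs(\fa_\eta)<\hvol(\cX_\eta)+\varepsilon,
\]
and sending $\varepsilon\to 0$ finishes the proof.

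The main obstacle is exactly this multiplicity comparison. Hilbert--Samuel multiplicity is \emph{upper} semicontinuous in a flat family, so a priori one only obtains $\hs(\fa_0)\ge \hs(\fa_\eta)$, the opposite direction from what is needed. Genuine equality $\hs(\fa_0)=\hs(\fa_\eta)$ --- equivalently, constancy in $t$ of the Hilbert--Samuel function $m\mapsto \ell(\cO_{\cX_t}/\fa_t^m)$ --- follows from $T$-flatness of \emph{every} power $\tilde\fa^m$, which is not automatic from flatness of $\tilde\fa$ alone but \emph{is} automatic when $\tilde\fa$ is generated by a regular sequence (via the Koszul complex together with the identification of Rees algebras of complete intersections). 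The strategy is therefore to restrict Liu's infimum to complete intersection ideals: since $\cO_{\cX_\eta,\sigma(\eta)}$ and $\cO_{\cX_0,\sigma(0)}$ are Cohen--Macaulay local rings with infinite residue fields, every $\fm$-primary ideal admits a minimal reduction generated by a system of parameters, and minimal reductions preserve both $\hs$ (by definition of reduction) and $\lct$ (since they preserve the integral closure). Choosing $\fa_\eta$ to be a minimal reduction from the outset, I would then pick $\tilde\fa$ as a family of regular sequences on $\cX$ near $\sigma(0)$ --- lifting generators so that their restrictions to $\cX_0$ remain a system of parameters, and invoking Cohen--Macaulayness of $\cO_{\cX,\sigma(0)}$ to conclude that these lifts form a regular sequence on $\cX$ --- which supplies the identity $\hs(\fa_0)=\hs(\fa_\eta)$ and, combined with the lct inequality above, closes the argument.
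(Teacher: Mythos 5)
There are two genuine gaps, and the second is precisely the obstruction that the paper's entire machinery is built to circumvent.

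First, your opening reduction is circular: lower semicontinuity of $t\mapsto\hvol(\sigma(t),\cX_t,\cD_t)$ in the Zariski topology is equivalent to ``stability under specialization along curves'' only if the function is already known to be constructible. Constructibility of the normalized volume is exactly Conjecture \ref{mainconj} of the paper and is open; the paper's Theorem \ref{openk}(3) is stated conditionally on it for this very reason. Without constructibility you cannot reduce to a DVR base, and indeed the paper's unconditional proof never does: it works with the auxiliary functions $\widehat{\ell_{c,k}}$, which \emph{are} constructible (via Hilbert schemes), and then passes to a limit in $k$.

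Second, and more seriously, the complete-intersection fix does not deliver $\hs(\fa_0)\le\hs(\fa_\eta)$. Write $R=\cO_{\cX,\sigma(0)}$, let $\fp$ be the prime of $\sigma(T)$, and let $t$ be a uniformizer. If you lift a minimal reduction $(f_1,\dots,f_n)$ of $\fa_\eta$ to elements $\tilde f_i\in R$ with $(\tilde f_\bullet,t)$ a system of parameters, then $R/(\tilde f_\bullet)$ is indeed a one-dimensional Cohen--Macaulay ring, flat over $\cO_{T,0}$, and $\hs(\fa_0)=\ell(R/(\tilde f_\bullet,t))$. But $V(\tilde f_\bullet)$ is in general strictly larger than $\sigma(T)$: Krull's theorem only bounds the codimension of its components, and unless (a suitable $\fp$-primary thickening of) $\fa_\eta$ has analytic spread $n$ in $R$, any choice of $n$ elements cuts out $\sigma(T)$ together with extra one-dimensional components, all of which meet $\cX_0$ in points that can be $\sigma(0)$ itself; equivalently, points of $V(\tilde f_\bullet)_\eta$ other than $\sigma(\eta)$ collide with $\sigma(0)$ in the limit. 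Flatness of all powers then gives
$\ell(\cO_{\cX_0,\sigma(0)}/\fa_0^m)=\ell(\cO_{\cX_\eta,\sigma(\eta)}/\fa_\eta^m)+\sum_q\ell(\cO_{\cX_\eta,q}/\tilde\fa_q^m)$,
the sum running over those extra points $q$, so $\hs(\fa_0)\ge\hs(\fa_\eta)$ with strict inequality in general---still the wrong direction. Your two requirements, ``$V(\tilde\fa)\subset\sigma(T)$ set-theoretically'' (which forces $\tilde\fa$ to be the schematic closure, whose multiplicity jumps up) and ``$\tilde\fa$ generated by a length-$n$ regular sequence restricting to the chosen reduction'' (whose colength jumps up), cannot in general be met simultaneously. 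This is exactly why the paper abandons $\hs(\fa)$ in favor of the colength $\ell(R/\fa)$, which \emph{is} locally constant on the Hilbert scheme, and then must prove the nontrivial comparison $n!\,\ell(R/\fa)\ge(1-\epsilon)\hs(\fa)$ for ideals sandwiched as $\fm^k\subset\fa\subset\fm^{\lceil\delta k\rceil}$ (Lemma \ref{colengthmult}, via Okounkov bodies), together with uniform Izumi and properness estimates in families to control which ideals matter. Your proposal correctly isolates the obstacle but the proposed repair does not overcome it.
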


One quick consequence of Theorem \ref{mainthm} is that smooth points
have the largest normalized volumes among all klt singularities
(see Theorem \ref{maxhvol} or \cite[Theorem A.4]{LiuX17}). Another natural consequence is that if $X$ is a Gromov-Hausdorff limit of K\"ahler-Einstein Fano manifolds, then the volume density function $x\mapsto\Theta(x,X)$ on $X(\bC)$ is lower semicontinuous in the Zariski topology, which is stronger than being lower semicontinuous in the Euclidean topology mentioned in \cite{SS17} (see Corollary \ref{cor:sctheta}).

We also state the following
natural conjecture on constructibility of normalized volumes of klt 
singularities (see also \cite[Conjecture 4.11]{Xu17}).

\begin{conj}\label{mainconj}
Let $\pi:(\cX,\cD)\to T$ together with a section $\sigma: T\to \cX$ be a
 $\bQ$-Gorenstein flat family of complex klt singularities over a normal
 variety $T$. Then the function $t\mapsto\hvol(\sigma(t),\cX_{t},\cD_t)$
 on $T(\bC)$ is constructible.
\end{conj}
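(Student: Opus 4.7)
The plan is to combine the lower semicontinuity result (Theorem \ref{mainthm}) with the theory of minimizers of the normalized volume function. By Noetherian induction on $T$, it suffices to produce a dense open subset $U\subseteq T$ on which $t\mapsto \hvol(\sigma(t),\cX_t,\cD_t)$ is constant; induction then handles each irreducible component of the complement. Note that lower semicontinuity alone does not suffice for constructibility, since a priori $\hvol$ could take infinitely many values as $t$ ranges over $T$; constructibility is equivalent to the additional statement that $\hvol$ takes only finitely many values.

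To locate $U$, I would proceed via the Stable Degeneration Conjecture. Over the generic point $\eta\in T$, a minimizer $v_\eta\in\Val_{\sigma(\eta),\cX_\eta}$ of $\hvol_{\sigma(\eta),(\cX_\eta,\cD_\eta)}$ exists by \cite{Blu16}. Conjecturally (cf.\ \cite{Li15a,LiuX17}), this minimizer may be chosen quasi-monomial with finitely generated associated graded ring, and the degeneration $\Spec\gr_{v_\eta}\cO_{\cX_\eta,\sigma(\eta)}$ is then a K-semistable Fano cone singularity of the same normalized volume. After shrinking $T$, and possibly passing to a finite cover, one would spread $v_\eta$ out to a compatible family $\{v_t\}_{t\in U}$ of quasi-monomial valuations on $\cX|_U\to U$, together with a simultaneous degeneration to a flat family of K-semistable Fano cones over $U$. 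Since the normalized volume is an invariant of the K-semistable cone and is constant in flat families of such cones with fixed Reeb data, this would force $\hvol$ to be constant on $U$.

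The main obstacle is twofold. First, the Stable Degeneration Conjecture itself is only known in restricted settings, so the existence of the quasi-monomial minimizer with finitely generated graded ring and K-semistable Fano cone degeneration is conjectural. Second, even granting the pointwise statement, one needs a \emph{uniform} version: a boundedness input ensuring that the K-semistable Fano cone degenerations that arise across $T$ form a bounded family. This reduces to boundedness of $n$-dimensional K-semistable Fano cone singularities with normalized volume bounded below, an open problem in general though known in various special cases (e.g.\ by work on boundedness of K-semistable log Fano pairs of fixed dimension and volume).

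A more elementary alternative route would bypass the minimizer and use the characterization of $\hvol$ as an infimum over $\fm$-primary ideals of a product of a log canonical threshold and a multiplicity. If one could show that this infimum is attained, or approximated to arbitrary precision, by ideals whose colength (or, better, whose Hilbert function) is bounded uniformly in the family, then constructibility would follow from the known constructibility of log canonical thresholds and Hilbert--Samuel multiplicities of ideal sheaves parametrized by a Hilbert scheme. The difficulty here is again a uniformity statement: there is no obvious a priori bound on the colength of near-minimizing ideals, and producing one appears to require input of essentially the same strength as boundedness of the K-semistable Fano cone degenerations above.
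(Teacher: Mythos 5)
The statement you are asked to prove is stated in the paper as Conjecture \ref{mainconj}; the paper itself offers no proof of it, only the observation (in a remark following the proofs of the main theorems) that it would follow from Theorem \ref{mainthm} together with an ACC-type statement for normalized volumes in bounded families --- that is, precisely the finiteness of values that you correctly identify as the missing ingredient beyond lower semicontinuity. So there is no proof in the paper to compare yours against, and any purported proof should be viewed with suspicion.

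Your proposal, as you yourself acknowledge at two separate points, is not a proof: both routes terminate in appeals to statements that were open at the time this paper was written. The first route requires the Stable Degeneration Conjecture (existence of a quasi-monomial minimizer with finitely generated associated graded ring degenerating to a K-semistable Fano cone), plus a spreading-out and boundedness statement for the resulting family of Fano cone degenerations; neither is established here, and the paper only invokes the SDC speculatively (in Section \ref{sec_fieldext}, to discuss behaviour under base change to non-algebraically-closed fields). The second route requires a uniform bound on the colengths (or Hilbert functions) of near-minimizing ideals across the family; the machinery of Sections \ref{sec:unifapp}--\ref{sec:hvolcol} gives uniform approximation of $\hvol$ \emph{from above} by the normalized colength functions $\widehat{\ell_{c,k}}$, which is exactly what yields lower semicontinuity, but it does not provide the matching uniform control from below at a fixed finite level $k$ that constructibility would demand. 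Your reduction via Noetherian induction to constancy on a dense open subset is the right skeleton, and your diagnosis of where the difficulty lies is accurate; but as written the argument establishes nothing unconditionally, which is consistent with the statement being left as a conjecture in the paper.
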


\medskip

Verifying the Zariski openness of K-semistability is 
an important step in the construction of an algebraic moduli space
of K-polystable $\bQ$-Fano varieties. In a smooth family of Fano manifolds,
Odaka \cite{Oda13} and Donaldson \cite{Don15} showed that the locus of fibers admitting K\"ahler-Einstein metrics (or equivalently, being K-polystable) with
discrete automorphism groups is Zariski open.  This was
generalized by Li, Wang and Xu \cite{LWX14} where they proved the
Zariski openness of K-semistability in a $\bQ$-Gorenstein
flat families of smoothable $\bQ$-Fano varieties
in their construction of the proper moduli space of smoothable
K-polystable $\bQ$-Fano varieties (see \cite{SSY16, Oda15} for
related results).
A common feature is that analytic methods were used essentially in proving these results.

Using the alternative characterization of K-semistability
by the affine cone construction 
developed by Li, the second author, and Xu in \cite{Li15b, LL16, LX16}, 
we apply Theorem \ref{mainthm} to prove the following result on weak openness
of K-semistability. Unlike the results described in the previous paragraph, our result is proved using purely algebraic methods and hence can be applied to $\bQ$-Fano families with non-smoothable fibers (or more generally, families of log Fano pairs).

\begin{thm}\label{openk}
Let $\varphi:(\cY,\cE)\to T$ be a $\bQ$-Gorenstein flat 
family of complex log Fano pairs over a normal base $T$.
If $(\cY_o,\cE_o)$ is log K-semistable 
for some closed point $o\in T$, then the following statements hold:
\begin{enumerate}
 \item There exists an intersection $U$ of countably many Zariski
 open neighborhoods of $o$, such that $(\cY_t,\cE_t)$ is
 log K-semistable for any closed point $t\in U$.
 In particular, $(\cY_t,\cE_t)$ is log K-semistable for a very general 
 closed point $t\in T$.
 \item Denote by $\eta$ the generic point of $T$, then
 the geometric generic fiber $(\cY_{\bar{\eta}},\cE_{\bar{\eta}})$
 is log K-semistable.
 \item Assume Conjecture \ref{mainconj} is true, then such $U$
 from (1) can be chosen as a genuine Zariski open neighborhood of $o$.
\end{enumerate}
\end{thm}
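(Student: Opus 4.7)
The plan is to reduce log K-semistability of $(\cY_t,\cE_t)$ to the normalized volume of an associated cone singularity, and then apply Theorem~\ref{mainthm}. Choose $r\in\bQ_{>0}$ so that $\cL:=-r(K_{\cY/T}+\cE)$ is a relatively very ample Cartier divisor, and form the relative affine cone $\cX:=\Spec_T\bigl(\bigoplus_{m\ge 0}\varphi_*\cL^{\otimes m}\bigr)\to T$ with vertex section $\sigma:T\to\cX$ and boundary $\cD$ induced from $\cE$. Since each fibre $(\cY_t,\cE_t)$ is a log Fano klt pair, the cone $(\cX_t,\cD_t)$ is klt at $\sigma(t)$ and the whole family is $\bQ$-Gorenstein flat, so Theorem~\ref{mainthm} applies. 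By the affine-cone characterization of K-semistability developed in \cite{Li15b,LL16,LX16}, there is a numerical constant $V$, depending only on $r$ and on the intersection number $(-K_{\cY_t}-\cE_t)^n$ (hence constant on $T$ by flatness), such that
\[
f(t)\ :=\ \hvol(\sigma(t),\cX_t,\cD_t)\ \le\ V,
\]
with equality if and only if $(\cY_t,\cE_t)$ is log K-semistable.

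Theorem~\ref{mainthm} now says $f$ is Zariski lower semicontinuous on $T(\bC)$, and by hypothesis $f(o)=V$. For each integer $m\ge 1$ the set $U_m:=\{t\in T(\bC):f(t)>V-1/m\}$ is therefore a Zariski open neighbourhood of $o$, and any closed $t$ in $U:=\bigcap_m U_m$ satisfies $V\le f(t)\le V$, i.e.\ $f(t)=V$, so $(\cY_t,\cE_t)$ is log K-semistable. Since $U$ is a countable intersection of dense Zariski opens, it contains every very general closed point, which proves~(1). Under Conjecture~\ref{mainconj} the function $f$ is furthermore constructible, hence takes only finitely many values on $T$, so $\{f=V\}=U_m$ as soon as $m$ exceeds the reciprocal of the gap between $V$ and the next attained value of $f$. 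This $U_m$ is a genuine Zariski open neighbourhood of $o$, giving~(3).

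For~(2) note that the generic point $\eta$ lies in each $U_m$. Suppose for contradiction that $(\cY_{\bar\eta},\cE_{\bar\eta})$ is not log K-semistable. Then the cone $(\cX_{\bar\eta},\cD_{\bar\eta})$ admits a valuation $\bar v$ at its vertex with $\hvol(\bar v)\le V-\varepsilon$ for some $\varepsilon>0$. Such $\bar v$ is defined over some finitely generated subextension $K'/K(T)$ of $\overline{K(T)}$, so it descends to a valuation on $\cX_{T'}$ for a dominant generically finite map $T'\to T$; spreading this valuation out over a non-empty open $T''\subseteq T'$ and specializing to a closed point $t'\in T''$ lying over some $t\in U$ produces a valuation on $(\cX_t,\cD_t,\sigma(t))$ of normalized volume at most $V-\varepsilon$, contradicting the K-semistability of $(\cY_t,\cE_t)$ established in~(1).

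The main technical obstacle is precisely this spread-out step in~(2): one must check that the minimizing valuation $\bar v$ on the geometric generic cone can be realized as the limit of a relative family of valuations and that normalized volume is preserved under the base change $\overline{K(T)}/K(T)$ and under specialization to a complex closed fibre without losing the strict inequality. Parts~(1) and~(3) are, by contrast, essentially formal consequences of the Li--Liu--Xu cone characterization combined with Theorem~\ref{mainthm}.
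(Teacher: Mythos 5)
Parts (1) and (3) of your proposal are correct and follow the paper's route exactly: form the relative affine cone with its vertex section, invoke the Li--Liu--Xu cone characterization (Theorem~\ref{ksscone}) to translate log K-semistability into the equality $\hvol(\sigma(t),\cX_t,\cD_t)=r^{-1}(-K_{\cY_t}-\cE_t)^{n-1}$ with constant right-hand side, and then intersect the open sets $\{f>V-1/m\}$ supplied by lower semicontinuity; constructibility under Conjecture~\ref{mainconj} upgrades this to a genuine open set.

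Part (2), however, has a genuine gap, and you have in effect flagged it yourself. Your plan is to take a destabilizing valuation $\bar v$ on the geometric generic cone, descend it to a finitely generated extension, spread it out, and specialize to a closed point of $U$ while preserving the bound $\hvol(\bar v)\le V-\varepsilon$. Neither step is available off the shelf: a real valuation of arbitrary rational rank does not obviously fit into a relative family over an open subset of $T'$, and the assertion that normalized volume does not increase under specialization is essentially the content of the hard direction of Theorem~\ref{mainthm} itself --- it is exactly the semicontinuity one is trying to prove, not an input one may assume. (A repair along your lines would replace the valuation by an $\fm$-primary ideal via Theorem~\ref{eqhvol}; ideals do spread out, $\lct$ is constructible by Proposition~\ref{lctsemicont}, and the Hilbert--Samuel function is generically constant by Proposition~\ref{prop:hs}, so $\lct(\fa_t)^n\e(\fa_t)$ is generically equal to its value at $\eta$; one then intersects the resulting dense open with $U$, using that $\bC$ is uncountable.) The paper avoids the issue entirely by proving the stronger Theorem~\ref{weaksc}: lower semicontinuity of $t\mapsto\hvol(\sigma(\gt),\cX_{\gt},\cD_{\gt})$ at \emph{all} scheme-theoretic points of $T$, over an arbitrary field of characteristic zero, together with invariance of $\hvol$ under algebraically closed field extension (Proposition~\ref{fieldext}). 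The generic point $\eta$ then lies in every Zariski open neighborhood of $o$, hence in $U$, and (2) is immediate --- this is recorded in the paper as the statement that the geometrically K-semistable locus is stable under generalization (Theorem~\ref{openkgen}(2)). Since you only invoke Theorem~\ref{mainthm} as stated for closed points of a complex variety, your argument cannot reach the generic point without the unproven specialization step.
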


The following corollary generalizes \cite[Theorem 4]{Li17} and follows easily from Theorem \ref{openk}. Note that a similar result for Fano cones is proved by Li and Xu independently in \cite[Proposition 2.36]{LX17}.
\begin{cor}\label{specialdeg}
 Suppose a complex log Fano pair $(Y,E)$ specially degenerates to a log K-semistable
 log Fano pair $(Y_0,E_0)$, then $(Y,E)$ is also log K-semistable.
\end{cor}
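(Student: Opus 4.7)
\medskip

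\textbf{Proof proposal.} The plan is to reduce Corollary \ref{specialdeg} directly to Theorem \ref{openk}(1). Unwinding the definition of special degeneration, we obtain a $\bQ$-Gorenstein flat, $\bG_m$-equivariant family $\varphi : (\cY, \cE) \to \bA^1$ such that $(\cY_0, \cE_0) \cong (Y_0, E_0)$ and $(\cY_t, \cE_t) \cong (Y, E)$ for every $t \in \bA^1 \setminus \{0\}$ via the $\bG_m$-action. The base $\bA^1$ is a smooth, hence normal, variety, so $\varphi$ satisfies the hypotheses of Theorem \ref{openk} with closed point $o = 0$, where the fiber is log K-semistable by assumption.

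Applying Theorem \ref{openk}(1), there exists a subset $U \subset \bA^1$, expressible as a countable intersection of Zariski open neighborhoods of $0$, such that $(\cY_t, \cE_t)$ is log K-semistable for every closed point $t \in U$. Since every nonempty Zariski open subset of the affine line $\bA^1$ has finite complement, the set $U$ is the complement of a countable subset of $\bA^1$, and in particular contains a closed point $t^\ast \neq 0$. By the $\bG_m$-equivariance of the family, there is an isomorphism $(\cY_{t^\ast}, \cE_{t^\ast}) \cong (Y, E)$, and log K-semistability is preserved under isomorphism, so $(Y, E)$ is log K-semistable as desired.

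There is essentially no further obstacle: the only point worth checking is that $U$ meets $\bA^1 \setminus \{0\}$, which is immediate from the fact that a countable intersection of cofinite subsets of $\bA^1$ is the complement of a countable set and hence uncountable. One could alternatively invoke Theorem \ref{openk}(2) and argue via the geometric generic fiber, but this requires checking that log K-semistability descends from a base field extension, whereas the approach above avoids any such consideration.
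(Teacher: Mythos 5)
Your proof is correct and follows essentially the same route as the paper, which deduces the corollary from the more general statement that in a family isotrivial over an open set $U$, K-semistability of a special fiber propagates to the very general fiber and hence (by isotriviality) to all fibers over $U$; your use of the $\bG_m$-action on the test configuration plays exactly the role of isotriviality there. The only implicit point, which you correctly handle, is that "very general" is nonempty because $\bC$ is uncountable.
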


Our strategy to prove Theorem \ref{mainthm} is to study invariants
of ideals instead of invariants of valuations. From Liu's characterization
of normalized volume by normalized multiplicities of ideals (see
\cite[Theorem 27]{Liu16} or Theorem \ref{eqhvol}), we know
\[
\hvol(\sigma(t),\cX_t,\cD_t)=\inf_{\fa}\lct(\cX_t,\cD_t;\fa)^n\cdot\e(\fa)\]
where the infimum is taken over all ideals $\fa \subset \cO_{\cX_t}$ cosupported at
$\sigma(t)$. These ideals are parametrized by a relative 
Hilbert scheme of $\cX/T$ with countably many components. Clearly $\fa\mapsto\lct(\cX_t,\cD_t;\fa)$ is lower semicontinuous
on the Hilbert scheme,  but $\fa\mapsto\e(\fa)$ may only be upper semicontinuous. Thus, it is unclear  what semicontinuity properties $\fa \mapsto \lct(\fa)^n\cdot\e(\fa)$ may have.

To fix this issue, we introduce the \emph{normalized colength of singularities}
$\widehat{\ell_{c,k}}(\sigma(t),\cX_t,\cD_t)$
by taking the infimum of $\lct(\cX_t,\cD_t;\fa)^n\cdot\ell(\cO_{\sigma(t),\cX_t}/\fa)$
for ideals $\fa$ satisfying $\fa\supset \fm_{\sigma(t)}^k$ and $\ell(\cO_{\sigma(t),\cX_t}/\fa)\geq ck^n$. The normalized colength function
behaves better in families since the colength function $\fa\mapsto\ell(\cO_{\sigma(t),\cX_t}/\fa)$
is always locally constant in the Hilbert scheme, so  $\fa\mapsto\lct(\cX_t,\cD_t;\fa)^n\cdot\ell(\cO_{\sigma(t),\cX_t}/\fa)$
is constructibly lower semicontinuous on the Hilbert scheme. 
Thus, the properness of Hilbert schemes implies that $t\mapsto
\widehat{\ell_{c,k}}(\sigma(t),\cX_t,\cD_t)$ is constructibly lower
semicontinuous on $T$.
Then we prove a key equality between the asymptotic normalized 
colength $\widehat{\ell_{c,\infty}}(\sigma(t),\cX_t,\cD_t)$ 
and the normalized volume $\hvol(\sigma(t),\cX_t,\cD_t)$ when $c$ is small (see Theorem \ref{hvolcolength})  using
local Newton-Okounkov bodies following \cite{Cut13, KK14} (see
Lemma \ref{colengthmult}) and convex geometry (see Appendix \ref{app:lattice}).
Then by establishing a uniform approximation of volumes by colengths
(see Theorem \ref{thm:volcol}) and generalizing Li's Izumi and properness estimates
\cite{Li15a} to families (see Theorems \ref{thm:izumifamily} and \ref{thm:properness}),
we show that the normalized colengh functions uniformly
approximate the normalized volume function from above (see Theorem \ref{thm:convncol}).
Putting these ingredients together, we get the proof of Theorem \ref{mainthm}.
\medskip

This paper is organized as follows. In Section \ref{prelim}, we give
the preliminaries including notations, normalized volumes of singularities, and $\bQ$-Gorenstein flat
families of klt pairs. In Section \ref{hatl},
we introduce the concept of normalized colengths of singularities.
We show in Theorem \ref{hvolcolength} that the normalized volume
of a klt singularity is the same as its asymptotic normalized colength.
The proof of Theorem \ref{hvolcolength} uses a comparison
of colengths and multiplicities established in Lemma \ref{colengthmult}. In Section \ref{sec_fieldext},
we study the normalized volumes and normalized colength after
algebraically closed field extensions. 
In Section \ref{sec:unifapp}, we establish a uniform approximation
of volume of a valuation by colengths of its valuation ideals.
In Section \ref{sec:izumi}, we generalize Li's Izumi and properness estimates
to families. The results from Sections \ref{sec:unifapp} and \ref{sec:izumi}
enable us to prove the uniform approximation of normalized volumes
by normalized colengths from above in families (see Section \ref{sec:hvolcol}).
The proofs of main theorems
are presented in Section \ref{sec_proofs}. We give applications
of our main theorems in Section \ref{sec_appl}. Theorem \ref{maxhvol}
generalizes the inequality part of \cite[Theorem A.4]{LiuX17}.
We show that the volume density function on a Gromov-Hausdorff limit of K\"ahler-Einstein manifolds is lower semicontinuous in the Zariski topology (see Corollary \ref{cor:sctheta}).
We give an effective upper bound on the degree of finite quasi-\'etale maps 
over klt singularities on Gromov-Hausdorff limits of K\"ahler-Einstein
Fano manifolds (see Theorem \ref{ghlimit}). In Appendix \ref{app:lattice}
we provide certain convex geometric results on lattice points counting that are needed
in proving Lemma \ref{colengthmult}. In Appendix \ref{app:HS}, we 
provide results on constructbility of Hilbert--Samuel funtions that
are needed in proving uniform approximation results in Section \ref{sec:unifapp}.
\medskip

\noindent \emph{Postscript}:
After this document was first posted on the arXiv, 
the authors went on to show that the global log canonical threshold and the stability threshold are lower semicontinuous in families of polarized varieties \cite{BL18}. 
The results in \emph{loc. cit.} may be viewed as global analogues of Theorem 1 and their proofs are similar in spirit (though, the technical details are quite different).

\subsection*{Acknowledgements} The first author would like to thank his advisor Mircea Musta\c{t}\u{a} for numerous useful discussions and his constant support. In addition, he would like to thank Mattias Jonsson, Ilya Smirnov, and Tommaso de Fernex for many useful conversations.  The second author would like to thank  Chi Li and Chenyang Xu for fruitful discussions.  

We wish to
thank J\'anos Koll\'ar, Linquan Ma, Sam Payne,
Xiaowei Wang, Ziquan Zhuang, and the anonymous referees for helpful
comments on this document. We are also grateful to Ruixiang Zhang for his help
on Proposition \ref{convexgeo}.

\section{Preliminaries}\label{prelim}
\subsection{Notations}
In this paper, all varieties are assumed to be irreducible, reduced, and defined over a (not necessarily 
algebraically closed) field $\bk$ of characteristic $0$. 
For a variety $T$ over $\bk$, we denote the residue field of
any scheme-theoretic point $t\in T$ by $\kappa(t)$.
Given a morphism $\pi:\cX\to T$ between varieties over $\bk$, we write $\cX_t:=\cX\times_{T}\Spec(\kappa(t))$ for the scheme theoretic fiber over $t\in T$. We also denote
the geometric fiber of $\pi$ over $t\in T$ by $\cX_{\gt}:=\cX\times_{T}\Spec(\overline{\kappa(t)})$.
Suppose $X$ is a variety over $\bk$ and $x\in X$ is a $\bk$-rational point. Then for any
field extension $\bK/\bk$, we denote $(x_{\bK}, X_{\bK}):=(x,X)\times_{\Spec(\bk)}\Spec(\bK)$.

Let $X$ be a normal variety over $\bk$ and $D$ be an effective $\bQ$-divisor on $X$. We say that $(X,D)$ is a \emph{Kawamata log terminal (klt) pair} if $(K_X+D)$ is $\bQ$-Cartier and  $K_Y-f^*(K_X+D)$ has coefficients $>-1$ on some log resolution $f:Y\to (X,D)$. A klt pair $(X,D)$ is called \emph{a log Fano pair} if in addition $X$ is proper and $-(K_X+D)$ is ample. A klt pair $(X,D)$ together with a closed point $x\in X$ is called a \emph{klt singularity} $(x\in (X,D))$.

Let $(X,D)$ be a klt pair. For an ideal sheaf $\fa$ on $X$, we define
the \emph{log canonical threshold of $\fa$ with respect to $(X,D)$} by 
\[
\lct(X,D;\fa):=\inf_E\frac{A_{X,D}(\ord_E)}{\ord_E(\fa)},
\]
where the infimum is taken over all prime divisors $E$ on a log resolution $f:Y\to (X,D)$. We will often use the notation $\lct(\fa)$ to abbreviate $\lct(X,D;\fa)$ once the klt pair $(X,D)$ is specified.
If $\fa$ is co-supported at a single closed point $x\in X$, we define the \emph{Hilbert--Samuel multiplicity} of $\fa$ as
\[
\e(\fa):=\lim_{m\to\infty}\frac{\ell(\cO_{x,X}/\fa^m)}{m^n/n!}
\]
where $n:=\dim(X)$ and $\ell(\cO_{x,X}/\fa^m)$ denotes the length of $\cO_{x,X}/\fa^m$ as an $\cO_{x,X}$-module.

\subsection{Valuations}
Let $X$ be a variety defined over a field $\bk$ and $x\in X$ closed point. By a valuation of the function field $K(X)$, we mean a valuation $v\colon K(X)^\times \to \bR$ that is trivial on $\bk$. 
By convention, we set $v(0):=+\infty$. 
Such a valuation $v$ has \emph{center} $x$ if  $v$ is $\ge 0$ on $\cO_{x,X}$ and $>0$ on the maximal ideal of $\cO_{x,X}$. We write $\Val_{x,X}$ for the set of valuations of $K(X)$ with center $x$.

To any valuation $v\in \Val_{x,X}$ and $m\in \bZ_{>0}$ there is an associated \emph{valuation ideal}
defined locally by  $\fa_m(v) := \{f \in \cO_X \, \vert \, v(f) \ge m \}$. Note that $\fa_m(v)$ is $\fm_{x}$-primary for each $m \in \bZ_{>0}$. 
For an ideal $\fa\subset \cO_X$ and $v\in \Val_{x,X}$,  we set 
\[
v(\fa) := \min\{v(f)\, \vert\, f\in \fa \cdot \cO_{x,X} \}\in [0, +\infty].\]

\subsection{Normalized volumes of singularities}
 Let $\bk$ be an algebraically closed field of characteristic $0$. For
 an $n$-dimensional klt singularity $x\in (X,D)$ over $\bk$, C. Li \cite{Li15a} introduced the normalized volume function $\hvol_{x,(X,D)}:\Val_{x,X}\to \bR_{>0}\cup\{+\infty\}$.   
 Recall that for $v\in \Val_{x,X}$, 
\[
\nvol_{x,(X,D)}(v) := 
\begin{cases}
 A_{X,D}(v)^n\cdot \vol(v)  & \text{ if } A_{X,D}(v)< +\infty \\
 +\infty & \text{ if } A_{X,D}(v) = +\infty
 \end{cases},\]
where $A_{X,D}(v)$ and $\vol(v)$ denote the \emph{log discrepancy} and \emph{volume} of $v$. 
  As defined in  \cite{ELS03}, the \emph{volume} of $v$ is given by
\[
\vol(v):=  \limsup_{m \to \infty} 
\frac{ \ell(\cO_{x,X} / \fa_m(v) ) }{ m^n /n!}. 
\]
By \cite{ELS03,Mus02,LM09,Cut13},
\[
\vol(v)=   \lim_{m \to \infty} 
\frac{ \hs( \fa_m(v) ) }{ m^n }. \]
The \emph{log discrepancy} of $v$, denoted $A_{X,D}(v)$, is defined in \cite{JM12,BdFFU15} (and \cite{LL16} for the case of klt pairs). 

 The \emph{normalized volume} (also known as \emph{local volume}) of the singularity
 $x\in (X,D)$ is given by \[
 \hvol(x,X,D):=\inf_{v\in\Val_{x,X}}\hvol_{x,(X,D)}(v).
 \]
 When $\bk$ is uncountable, the above infimum is a minimum \cite{Blu16}.

 The following characterization of normalized volumes using log canonical thresholds and multiplicities of ideals is crucial in our study. Note that the right hand side of \eqref{lcte} was studied by de Fernex, Ein and Musta\c{t}\u{a} \cite{dFEM04}
 when $x\in X$ is smooth and $D=0$.
 
 \begin{thm}[{\cite[Theorem 27]{Liu16}}]\label{eqhvol}
 With the above notation, we have
 \begin{equation}\label{lcte}
 \hvol(x, X, D)=\inf_{\fa\colon \fm_{x}\textrm{-primary}}\lct(X,D;\fa)^n\cdot\e(\fa).
 \end{equation}
 
 \end{thm}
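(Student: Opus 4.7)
The plan is to establish the equality via two matched inequalities: for each $\fm_x$-primary ideal $\fa$ I would produce a valuation $v$ realizing $\hvol_{x,(X,D)}(v) \le \lct(\fa)^n \cdot \e(\fa)$, and conversely for each valuation $v$ with finite log discrepancy I would approximate $\hvol_{x,(X,D)}(v)$ from above using its valuation ideals $\fa_m(v)$. The two inputs linking valuations and ideals are (i) the existence of a divisorial log canonical place computing $\lct(\fa)$, and (ii) the asymptotic identity $\vol(v) = \lim_m \e(\fa_m(v))/m^n$ cited in the excerpt from \cite{ELS03, Mus02, LM09, Cut13}.

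For $\hvol(x,X,D) \le \inf_\fa \lct(\fa)^n \cdot \e(\fa)$, I fix an $\fm_x$-primary ideal $\fa$ and log-resolve the pair $(X, D + \fa^{\lct(\fa)})$. Pick an exceptional prime $E$ with log discrepancy zero for the boosted pair; setting $v := \ord_E$ one has $A_{X,D}(v) = \lct(\fa) \cdot v(\fa)$. Since $A_{X,D}$ and $\vol$ are homogeneous of degrees $1$ and $-n$ in $v$, rescaling $v$ by $1/v(\fa)$ leaves $\hvol_{x,(X,D)}(v)$ unchanged, so I may assume $v(\fa) = 1$, giving $A_{X,D}(v) = \lct(\fa)$ and $\fa \subseteq \fa_1(v)$. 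Multiplying this inclusion $m$ times yields $\fa^m \subseteq \fa_m(v)$, hence $\ell(\cO_{x,X}/\fa^m) \ge \ell(\cO_{x,X}/\fa_m(v))$; dividing by $m^n/n!$ and sending $m \to \infty$ gives $\e(\fa) \ge \vol(v)$, so
$$\lct(\fa)^n \cdot \e(\fa) \;\ge\; A_{X,D}(v)^n \cdot \vol(v) \;=\; \hvol_{x,(X,D)}(v) \;\ge\; \hvol(x,X,D).$$

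For the reverse inequality, I fix $v \in \Val_{x,X}$; if $A_{X,D}(v) = +\infty$ there is nothing to prove, so assume it is finite. Test with $\fa = \fa_m(v)$: since $v(\fa_m(v)) \ge m$, the general valuative estimate $\lct(X,D;\fb) \le A_{X,D}(w)/w(\fb)$ from \cite{JM12, BdFFU15} applied to $w = v$ and $\fb = \fa_m(v)$ gives $\lct(\fa_m(v)) \le A_{X,D}(v)/m$, whence
$$\lct(\fa_m(v))^n \cdot \e(\fa_m(v)) \;\le\; A_{X,D}(v)^n \cdot \frac{\e(\fa_m(v))}{m^n}.$$
Letting $m \to \infty$ and invoking the asymptotic identity above, the right-hand side tends to $A_{X,D}(v)^n \cdot \vol(v) = \hvol_{x,(X,D)}(v)$, so $\inf_\fa \lct(\fa)^n \cdot \e(\fa) \le \hvol_{x,(X,D)}(v)$; taking the infimum over $v$ finishes the argument.

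The main obstacle is the asymptotic multiplicity formula $\e(\fa_m(v))/m^n \to \vol(v)$ for an arbitrary valuation with finite log discrepancy rather than only a divisorial or quasi-monomial one. For quasi-monomial $v$ this is a Fujita-type approximation for graded sequences of $\fm_x$-primary ideals in the spirit of Cutkosky, and the passage to general $v \in \Val_{x,X}$ with $A_{X,D}(v) < +\infty$ requires combining that case with the Jonsson--Musta\c{t}\u{a} approximation of valuations by quasi-monomial ones together with the lower semicontinuity of $A_{X,D}$. Everything else in the argument is a homogeneity-plus-comparison manipulation anchored by the single inclusion $\fa \subseteq \fa_1(v)$ whenever $v(\fa) = 1$.
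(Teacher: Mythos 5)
Your argument is correct and is essentially the proof of \cite[Theorem 27]{Liu16}, which this paper cites without reproving: one inequality via a divisorial valuation computing $\lct(\fa)$ (rescaled so $v(\fa)=1$) together with the inclusion $\fa^m\subseteq\fa_m(v)$, the other by testing against the valuation ideals $\fa_m(v)$ and the identity $\vol(v)=\lim_m \e(\fa_m(v))/m^n$. Your closing worry about that identity is unnecessary: it is stated in the preliminaries for arbitrary $v\in\Val_{x,X}$ and follows from Cutkosky's theorem on graded families of $\fm_x$-primary ideals in an analytically irreducible local domain, with no reduction to quasi-monomial valuations required.
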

 
 The following theorem provides an alternative characterization
 of K-semistability using the affine cone construction. Here we
 state the most general form, and special cases can be found
 in \cite{Li15b, LL16}.
 
\begin{thm}[{\cite[Proposition 4.6]{LX16}}]\label{ksscone}
Let $(Y,E)$ be a log Fano pair of dimension $(n-1)$ over an algebraically closed field $\bk$ of characteristic $0$. For $r\in\bN$ satisfying
$L:=-r(K_Y+E)$ is Cartier, the affine cone $X=C(Y,L)$ is
defined by $X:=\Spec\oplus_{m\geq 0}H^0(Y,L^{\otimes m})$. Let $D$ be the $\bQ$-divisor on $X$ corresponding to $E$. Denote by $x$ the cone vertex of $X$. Then 
\[
\hvol(x,X,D)\leq r^{-1}(-K_Y-E)^{n-1},
\]
and the equality holds if and only if $(Y,E)$ is log K-semistable.
\end{thm}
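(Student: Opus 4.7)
The inequality will follow from exhibiting one natural valuation. Let $v_0\in\Val_{x,X}$ be the canonical toric valuation induced by the cone grading (equivalently, $v_0=\ord_Y$ where $Y$ is the exceptional divisor of the vertex blowup $\widetilde{X}=\mathrm{Tot}(L^{-1})\to X$). Counting global sections gives
\[
\vol(v_0)=\lim_m\frac{\ell(R/\fa_m(v_0))}{m^n/n!}=L^{n-1}=r^{n-1}(-K_Y-E)^{n-1}.
\]
Adjunction along $Y\subset\widetilde{X}$, using $Y|_Y=-L$ together with $L=-r(K_Y+E)$, gives $A_{X,D}(v_0)=1/r$. Multiplying yields $\hvol_{x,(X,D)}(v_0)=r^{-1}(-K_Y-E)^{n-1}$, which is the desired upper bound.

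For the equivalence I would combine Theorem \ref{eqhvol} with an equivariance reduction. The cone carries a $\bG_m$-action fixing $x$, and for any $\fm_x$-primary ideal $\fa$ the initial ideal $\init(\fa)$ with respect to this grading is $\bG_m$-invariant and arises from a flat $\bG_m$-equivariant degeneration of $\fa$. By constancy of Hilbert--Samuel multiplicity along flat families and lower semicontinuity of log canonical thresholds, $\e(\init(\fa))=\e(\fa)$ and $\lct(X,D;\init(\fa))\le\lct(X,D;\fa)$. Hence the infimum in Liu's formula $\hvol(x,X,D)=\inf_\fa\lct(X,D;\fa)^n\e(\fa)$ may be restricted to $\bG_m$-invariant ideals, namely graded ideals $\fa=\bigoplus_m\fa_m\subset R:=\bigoplus_m H^0(Y,mL)$ with $\fa_m=H^0(Y,mL)$ for $m\gg0$. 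Such ideals are in bijection with finitely generated decreasing $\bZ$-filtrations of $R$, which via the Rees construction produce test configurations $(\cY,\cE;\cL)\to\bA^1$ of $(Y,E;L)$.

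The crux is then a dictionary: express
\[
\lct(X,D;\fa)^n\cdot\e(\fa)-r^{-1}(-K_Y-E)^{n-1},
\]
for $\fa$ graded, as a positive multiple of the generalized Donaldson--Futaki invariant of the associated test configuration. The $\e(\fa)$ side is controlled by Okounkov-body-type computations on the cone (in the spirit of Lemma \ref{colengthmult}) that translate the Hilbert--Samuel function of a graded ideal into a twisted Hilbert series of the filtration on $R$. The $\lct$ side is controlled by restricting divisorial valuations on $X$ to the section ring and computing their log discrepancies via adjunction from $\widetilde{X}$, matching the numerator of the DF invariant. Together these identifications give the equivalence: $(Y,E)$ is log K-semistable iff every graded $\fa$ satisfies $\lct(X,D;\fa)^n\e(\fa)\ge r^{-1}(-K_Y-E)^{n-1}$, iff the upper bound from the first paragraph is attained.

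I expect the main obstacle to be the third paragraph: the precise identification of $\lct(X,D;\fa)^n\cdot\e(\fa)$ with a Donaldson--Futaki invariant (up to the expected normalizing constants) requires careful intersection-theoretic bookkeeping on the total space of the test configuration and a delicate comparison between colengths of graded ideals on the cone and Hilbert functions of filtrations on $R$. The equivariance reduction, though conceptually clean, also needs the lower semicontinuity of $\lct$ in flat families, which one has to either invoke or reprove in this graded setting. These are precisely the technical inputs of \cite{Li15b,LL16} that \cite[Proposition 4.6]{LX16} packages into the stated equivalence.
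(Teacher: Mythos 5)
First, a point of comparison: the paper does not prove this statement. Theorem \ref{ksscone} is imported verbatim from \cite[Proposition 4.6]{LX16}, so there is no internal argument to measure yours against; what matters is whether your sketch would stand on its own. Your first paragraph does: evaluating $\hvol$ at the canonical valuation $v_0=\ord_{Y_0}$ of the vertex blowup, the computation $\ell(R/\fa_m(v_0))=\sum_{k<m}h^0(Y,kL)$ gives $\vol(v_0)=L^{n-1}$, and the adjunction formula $A_{X,D}(v_0)=1/r$ is the standard cone computation (consistent with the test case $X=\bA^n$ over $(\bP^{n-1},\cO(1))$). That yields the inequality correctly.

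The equivalence half has a concrete gap in the equivariant reduction. You assert that $\e(\init(\fa))=\e(\fa)$ ``by constancy of Hilbert--Samuel multiplicity along flat families.'' What is constant along the Rees degeneration $\fa\rightsquigarrow\init(\fa)$ is the colength $\ell(R/\fa)$, not the multiplicity: since $\init(\fa)^k\subseteq\init(\fa^k)$, one only gets $\ell(R/\init(\fa)^k)\geq\ell(R/\init(\fa^k))=\ell(R/\fa^k)$, hence $\e(\init(\fa))\geq\e(\fa)$, and the inequality is strict in general. For instance $\fa=(x^2-y^2,xy)\subset\bk[x,y]$ is a complete intersection with $\e(\fa)=\ell(R/\fa)=4$, while its initial ideal for the weights $(1,2)$ is $(x^2,xy,y^3)$, of multiplicity $5$. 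Since $\lct(\init(\fa))\leq\lct(\fa)$ while $\e(\init(\fa))\geq\e(\fa)$, the product $\lct(\fa)^n\e(\fa)$ can move either way under degeneration, so you cannot restrict the infimum in Theorem \ref{eqhvol} to graded ideals this way. The standard repair is to degenerate graded \emph{sequences} of ideals (e.g.\ the valuation ideals $\ab(v)$): for a graded sequence, $\e(\ab)=\lim_m n!\,\ell(R/\fa_m)/m^n$ is computed from colengths, each of which \emph{is} preserved by $\init$, so $\e(\init(\ab))=\e(\ab)$ and $\lct(\init(\ab))\leq\lct(\ab)$; this is exactly how the equivariant reduction is carried out in \cite{Li15b,LL16}.

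Finally, your third paragraph correctly identifies the real content --- the dictionary between $\bG_m$-invariant graded data on the cone, test configurations of $(Y,E;L)$, and the (log) Donaldson--Futaki invariant --- but defers it wholesale to \cite{Li15b,LL16,LX16}. Since those are precisely the sources the paper cites for the statement, your proposal should be read as a roadmap reproducing the known strategy (with the fix above) rather than an independent proof of the hard direction.
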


\subsection{$\bQ$-Gorenstein flat families of klt pairs}
In this section, the field $\bk$ is not assumed to be algebraically closed.

\begin{defn}\hspace{.1 in}

\begin{enumerate}[label=(\alph*)]
\item Given a normal variety $T$, a \emph{$\bQ$-Gorenstein flat family of klt pairs over} $T$ consists of
a surjective flat morphism $\pi:\cX\to T$ from a variety $\cX$, and 
an effective $\bQ$-divisor $\cD$ on $\cX$ avoiding codimension $1$ singular points of $\cX$,
such that the following conditions hold:
\begin{itemize}
    \item All fibers $\cX_t$ are connected, normal and not contained in $\Supp(\cD)$;
    \item $K_{\cX/T}+\cD$ is $\bQ$-Cartier;
    \item $(\cX_t,\cD_t)$ is a klt pair for any $t\in T$.
\end{itemize}
\item A $\bQ$-Gorenstein flat family of klt pairs $\pi:(\cX,\cD)\to T$ together with a section $\sigma:T\to\cX$ is called a \emph{$\bQ$-Gorenstein flat family of klt singularities}. We denote
by $\sigma(\gt)$ the unique closed point of $\cX_{\gt}$ lying over $\sigma(t)\in\cX_t$.
\end{enumerate}
\end{defn}

\begin{prop} Let $\pi:(\cX,\cD)\to T$ be a $\bQ$-Gorenstein flat family of klt pairs over a normal variety $T$. The following hold. 
\begin{enumerate}
    \item There exists a closed subset $\cZ$ of $\cX$ 
    of codimension at least $2$, such that $\cZ_t$ has codimension at least $2$ in $\cX_t$ for every $t\in T$, and $\pi:\cX\setminus \cZ\to T$ is a smooth morphism.
    \item $\cX$ is normal.    
    \item For any morphism $f:T'\to T$ from a normal variety $T'$
    to $T$, the base change $\pi_{T'}:(\cX_{T'},\cD_{T'})=(\cX,\cD)
    \times_{T}T'\to T'$ is a $\bQ$-Gorenstein flat family of klt 
    pairs over $T'$, and $K_{\cX_{T'}/T'}+\cD_{T'}=g^*(K_{\cX/T}+\cD)$
    where $g:\cX_{T'}\to \cX$ is the base change of $f$.
\end{enumerate}
\end{prop}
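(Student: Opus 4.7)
The plan is to dispatch the three parts in sequence, each building on the previous; throughout I rely on Serre's criterion together with standard flat-base-change results from EGA.

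For part (1), I would set $\cZ:=\cX\setminus \cX^{\rm sm}_{/T}$, the complement of the relative smooth locus of $\pi$, which is closed by openness of smoothness for morphisms of finite type. Because $\pi$ is already flat, $\pi$ is smooth at a point $x$ if and only if the scheme-theoretic fiber $\cX_{\pi(x)}$ is smooth at $x$, so $\cZ\cap \cX_t$ equals the singular locus of $\cX_t$, which has codimension at least $2$ in $\cX_t$ by normality of $\cX_t$ (which follows from the klt hypothesis). To bootstrap this to a codimension bound on $\cZ$ inside $\cX$, I would take an irreducible component $W\subset\cZ$, set $S:=\overline{\pi(W)}$, and combine the generic fiber-dimension theorem with the flatness identity $\dim\cX_s=\dim\cX-\dim T$ to conclude $\dim W\leq \dim S+(\dim\cX-\dim T-2)\leq \dim\cX-2$.

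For part (2), I would invoke Serre's criterion $R_1+S_2$. Part (1) yields $R_1$, since the singular locus of $\cX$ is contained in the non-smooth locus $\cZ$ of $\pi$. For $S_2$, I would cite the standard fact that a flat morphism whose base and all fibers are $S_k$ has $S_k$ total space (EGA IV, 6.4.1); applied with $k=2$ this gives $S_2$ on $\cX$ because $T$ is normal and each fiber $\cX_t$ is normal.

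For part (3), flatness of $\pi_{T'}$ is automatic from base change, and $\cD_{T'}:=g^*\cD$ is well defined as a flat pullback of Weil divisors. The geometric fiber of $\pi_{T'}$ over any $t'\in T'$ coincides with the geometric fiber of $\pi$ over $f(t')\in T$, so the fibers of $\pi_{T'}$ remain normal and klt, and the argument of (2) applied to $\pi_{T'}$ gives normality of $\cX_{T'}$. For the canonical-class identity, the relative dualizing sheaf $\omega_{\cX/T}$ is a line bundle on $\cX\setminus\cZ$ and commutes with flat base change there, so $K_{\cX_{T'}/T'}+\cD_{T'}$ and $g^*(K_{\cX/T}+\cD)$ are $\bQ$-Cartier classes on the normal variety $\cX_{T'}$ that agree outside a subset of codimension $\geq 2$, hence they agree globally. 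The one non-formal ingredient, and the main point that deserves care, is that the klt condition on fibers is preserved under the possibly transcendental field extension $\kappa(f(t'))\subset\kappa(t')$; this is standard, because klt is a geometric condition and log discrepancies are invariant under algebraic and purely transcendental extensions of the base field, but it is the step that makes the base-change argument in (3) legitimate.
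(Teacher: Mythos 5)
Your proposal follows essentially the same route as the paper: $\cZ$ is the non-smooth locus of $\pi$ (the paper realizes it as the jumping locus of $\Omega_{\cX/T}$, which is the same set), the codimension bound on $\cZ$ comes from the fiberwise bound plus equidimensionality of $\pi$, normality is via Serre's criterion with the depth formula $\depth(\cO_{x,\cX})=\depth(\cO_{x,\cX_t})+\depth(\cO_{t,T})$ for flat morphisms, and the identity of canonical classes is obtained by checking it on the smooth locus and extending across codimension $2$.

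There is, however, one step that is wrong as stated: in part (2) you claim $(R_1)$ holds "since the singular locus of $\cX$ is contained in the non-smooth locus $\cZ$ of $\pi$." Smoothness of $\pi$ at $x$ forces $\cX$ to be regular at $x$ only when $T$ is regular at $\pi(x)$; since $T$ is merely assumed normal, points of $\pi^{-1}(T_{\mathrm{sing}})\setminus\cZ$ may well be singular points of $\cX$ (take $\cX=T\times\bA^1$ with $T$ a singular normal variety). The repair is exactly what the paper does: $\cX\setminus\bigl(\cZ\cup\pi^{-1}(T_{\mathrm{sing}})\bigr)$ is regular, and $\pi^{-1}(T_{\mathrm{sing}})$ also has codimension at least $2$ in $\cX$ because $T_{\mathrm{sing}}$ has codimension at least $2$ in the normal variety $T$ and the flat morphism $\pi$ is equidimensional; so $(R_1)$ still holds. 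Two smaller imprecisions in part (3): the geometric fiber of $\pi_{T'}$ over $t'$ is a further base change of the geometric fiber of $\pi$ over $f(t')$ (by $\overline{\kappa(f(t'))}\subset\overline{\kappa(t')}$) rather than literally the same scheme --- though, as you note, klt and normality are insensitive to such extensions in characteristic $0$ --- and before invoking Serre's criterion to conclude $\cX_{T'}$ is a normal \emph{variety} one should observe (as the paper does) that $\cX_{T'}$ is irreducible, which follows since $T'$ and all fibers of $\pi_{T'}$ are irreducible.
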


\begin{proof}
(1) Assume $\pi$ is of relative dimension $n$. Let 
$\cZ:=\{x\in \cX\mid \dim_{\kappa(x)}\Omega_{\cX/T}\otimes\kappa(x)>n\}$.
It is clear that $\cZ$ is Zariski closed. Since $\bk$ is of characteristic $0$,
$\cZ_t=\cZ\cap\cX_t$ is the singular locus of $\cX_t$. Hence $\mathrm{codim}_{\cX_t}\cZ_t\geq 2$
 because $\cX_t$ is normal.

(2) From (1) we know that $\cZ$ is of codimension at least $2$ in 
$\cX$, and $\cX\setminus \cZ$  is smooth over $T$. Thus
$\cX\setminus (\cZ\cup\pi^{-1}(T_{\mathrm{sing}}))$ is regular,
and $\cZ\cup\pi^{-1}(T_{\mathrm{sing}})$ has codimension at least 
$2$ in $\cX$. So $\cX$ satisfies property $(R_1)$. Since $\pi$ is flat,
 for any point $x\in\cX_t$ we have $\depth(\cO_{x,\cX})=\depth(\cO_{x,\cX_t})+\depth(\cO_{t,T})$
 by \cite[(21.C) Corollary 1]{Mat80}. Hence it is easy to
 see that $\cX$ satisfies property ($S_2$) since both $\cX_t$ and $T$
 are normal. Hence $\cX$ is normal.
 
(3) Let $\cZ_{T'}:=\cZ\times_T T'$, and note that $\cX_{T'}\setminus\cZ_{T'}$
is smooth over $T'$. Since the fibers of $\pi_{T'}$ and $T'$ are
irreducible, $\cX_{T'}$ is also irreducible. Thus
the same argument of (2) implies that $\cX_{T'}$ satisfies 
both ($R_1$) and ($S_2$), which means $\cX_{T'}$ is normal. Since
$\pi|_{\cX\setminus\cZ}$ is smooth, we know that $K_{\cX_{T'}/T'}+\cD_{T'}$
and $g^*(K_{\cX/T}+\cD)$ are $\bQ$-linearly equivalent after restriting
to $\cX_{T'}\setminus \cZ_{T'}$. Since $\cZ_{T'}$ is of 
codimension at least $2$ in $\cX_{T'}$, the $\bQ$-linear equivalence over
$\cX_{T'}\setminus\cZ_{T'}$ extends to $\cX_{T'}$. Thus we finish
the proof.
\end{proof}

\begin{defn}
\begin{enumerate}[label=(\alph*)]
 \item Let $Y$ be a normal projective variety. Let $E$
 be an effective $\bQ$-divisor on $Y$. We say that
 $(Y,E)$ is a \emph{log Fano pair} if $(Y,E)$ is a klt pair and 
 $-(K_Y+E)$ is $\bQ$-Cartier and ample. We say $Y$ is a 
 \emph{$\bQ$-Fano variety} if $(Y,0)$ is a log Fano pair.
 \item
 Let $T$ be a normal variety. A $\bQ$-Gorenstein  family
 of klt pairs $\varphi:(\cY,\cE)\to T$ is called a 
 \emph{$\bQ$-Gorenstein flat family of log Fano pairs} if $\varphi$ is
 proper and $-(K_{\cY/T}+\cE)$ is $\varphi$-ample.
\end{enumerate}

\end{defn}

The following proposition states a well known result on the behaviour of the log canonical threshold in families. 
See \cite[Corollary 1.10]{Amb16} for a similar statement. 
The proof is omitted because it follows from arguments similar to those in \cite{Amb16}.

\begin{prop}\label{lctsemicont}
Let $\pi:(\cX,\cD)\to T$ be a $\bQ$-Gorenstein flat family of klt pairs over a normal variety $T$. Let $\fa$ be an ideal sheaf of $\cX$. Then
\begin{enumerate}
\item The function $t\mapsto \lct(\cX_t,\cD_t;\fa_t)$ on $T$ is constructible;
\item If in addition $V(\fa)$ is proper over $T$, then the function $t\mapsto \lct(\cX_t,\cD_t;\fa_t)$ is lower semicontinuous with respect to the Zariski topology on $T$.
\end{enumerate}
\end{prop}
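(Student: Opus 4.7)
My plan follows the standard strategy for semicontinuity of log canonical thresholds in families, parallel to \cite{Amb16}.

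For (1), I will take a log resolution $f\colon \cY \to \cX$ of the pair $(\cX, \cD + V(\fa))$, writing $f^{-1}\fa \cdot \cO_\cY = \cO_\cY(-\sum_i b_i E_i)$. By generic smoothness in characteristic zero, there is a dense open subset $U \subset T$ such that for every $t \in U$, the restriction $f_t\colon \cY_t \to \cX_t$ is a log resolution of $(\cX_t, \cD_t + V(\fa_t))$, with each $E_{i,t}$ a prime divisor having the same log discrepancy relative to $(\cX_t, \cD_t)$ as $E_i$ has relative to $(\cX, \cD)$, and $\ord_{E_{i,t}}(\fa_t) = b_i$. Consequently,
\[
\lct(\cX_t, \cD_t; \fa_t) \;=\; \min_{i\colon b_i > 0} \frac{A_{\cX_t, \cD_t}(\ord_{E_{i,t}})}{b_i}
\]
is independent of $t \in U$. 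Applying the same analysis to the normalization of each irreducible component of $T \setminus U$ and using Noetherian induction yields constructibility on $T$.

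For (2), by (1) the set $S_c := \{t \in T \colon \lct(\cX_t, \cD_t; \fa_t) \leq c\}$ is constructible, so it suffices to show $S_c$ is stable under specialization for every $c \in \bQ_{>0}$. Given a specialization $t_0 \in \overline{\{t\}}$, I will pass to a smooth curve in $T$ through $t_0$ whose generic point maps to $t$ (using normality of $T$, via the normalization of $\overline{\{t\}}$), reducing to the case $T = \Spec R$ for a DVR $R$. I then argue the contrapositive: if $(\cX_0, \cD_0 + c\fa_0)$ is klt, then $(\cX_\eta, \cD_\eta + c\fa_\eta)$ is klt. Since $\cX_0 \subset \cX$ is a Cartier divisor (as $R$ is a DVR), inversion of adjunction applied to $(\cX, \cD + c\fa + \cX_0)$ shows that $(\cX, \cD + c\fa)$ is klt in an open neighborhood $V$ of $\cX_0$. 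The same inversion argument applied without the ideal term shows that the total space $(\cX, \cD)$ is klt on all of $\cX$, so the non-klt locus $W$ of $(\cX, \cD + c\fa)$ lies in the cosupport of $\fa$. By hypothesis $V(\fa) \to T$ is proper, so $W \to T$ is proper and $\pi(W) \subset T$ is closed. But $W \cap V = \emptyset$ gives $t_0 \notin \pi(W)$, and since $\Spec R$ has only two points, $\pi(W) = \emptyset$; hence $(\cX_\eta, \cD_\eta + c\fa_\eta)$ is klt.

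The main technical obstacle is the essential use of the properness assumption in the final step: without it, $\pi(W)$ need not be closed in $T$, and the specialization argument breaks down. A secondary point is verifying inversion of adjunction in the precise form needed (with an ideal term and rational coefficient $c$); this follows from the standard $\bQ$-divisor formulation by passing to a further blowup, and crucially uses that $\lct$ takes rational values so that it suffices to handle $c \in \bQ$.
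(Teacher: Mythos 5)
Your argument is correct and is essentially the proof the paper has in mind: the paper omits the argument and refers to Ambro's \cite{Amb16}, whose strategy is exactly yours (log resolution plus generic smoothness and Noetherian induction for constructibility; reduction to a DVR base and inversion of adjunction, with properness of $V(\fa)$ ensuring the non-klt locus has closed image, for semicontinuity). The only nitpick is the phrase ``smooth curve in $T$ through $t_0$ whose generic point maps to $t$'': when $\dim\overline{\{t\}}>1$ one should instead factor the specialization $t\rightsquigarrow t_0$ into immediate specializations and take the DVR obtained by localizing the normalization of $\overline{\{t\}}$ at a codimension-one point, but this is standard and does not affect the argument.
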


\section{Comparison of normalized volumes and normalized colengths}
\subsection{Normalized colengths of klt singularities}\label{hatl}
\begin{defn} Let  $x\in (X,D)$ be a klt singularity over an algebraically closed field $\bk$ of characteristic $0$.
Denote its local ring by $(R,\fm):=(\cO_{x,X},\fm_x)$. 
\begin{enumerate}[label=(\alph*)]
\item Given constants $c\in\bR_{>0}$ and $k\in\bN$, we define
the \emph{normalized colength of $x\in (X,D)$ with respect
to $c,k$} as 
\[
\widehat{\ell_{c,k}}(x,X,D):=n!\cdot\inf_{\substack{\fm^k\subset\fa\subset\fm\\\ell(R/\fa)\geq ck^n}}\lct(\fa)^n\cdot\ell(R/\fa).
\]
Note that the assumption $\fm^k\subset\fa\subset\fm$ implies $\fa$ is an $\fm$-primary ideal.
\item 
Given a constant $c\in\bR_{>0}$, we define the \emph{asymptotic normalized colength function
of $x\in (X,D)$ with respect to $c$} as 
\[
\widehat{\ell_{c,\infty}}(x,X,D):=\liminf_{k\to\infty}\widehat{\ell_{c,k}}(x,X,D).
\]
\end{enumerate}
\end{defn}
It is clear that $\widehat{\ell_{c,k}}$ is an increasing function in $c$. The main result in this section is the following theorem.

\begin{thm}\label{hvolcolength}
For any klt singularity $x\in (X,D)$ over an algebraically closed field $\bk$ of characteristic $0$, there exists
$c_0=c_0(x,X,D)>0$ such that 
\begin{equation}\label{eq_hvolcolength}
 \widehat{\ell_{c,\infty}}(x,X,D)=\hvol(x, X, D) \quad\textrm{ whenever }0< c\leq c_0.
\end{equation}
\end{thm}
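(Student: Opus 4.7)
The plan is to prove the two inequalities $\widehat{\ell_{c,\infty}}(x,X,D) \geq \hvol(x,X,D)$ for every $c>0$ and $\widehat{\ell_{c,\infty}}(x,X,D) \leq \hvol(x,X,D)$ for all sufficiently small $c$; the constant $c_0$ is then chosen so that the second inequality holds.

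For the \emph{lower bound} I would rely on Theorem \ref{eqhvol}, which writes $\hvol(x,X,D)$ as the infimum of $\lct(\fa)^n\cdot\e(\fa)$ over $\fm$-primary ideals, together with the promised Lemma \ref{colengthmult}. The naive Lech-type bound $\e(\fa)\leq n!\,\ell(R/\fa)\cdot\e(\fm)$ is off by a factor of $\e(\fm)$, and removing this factor is precisely the role of Lemma \ref{colengthmult}: in the regime where $\fm^k\subset\fa\subset\fm$ and $\ell(R/\fa)\geq ck^n$, the lemma should yield a comparison of the form $\e(\fa)\leq n!\,\ell(R/\fa)(1+\epsilon_k)$ with $\epsilon_k\to 0$ as $k\to\infty$. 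Combining this with $\lct(\fa)^n\e(\fa)\geq \hvol$ gives $n!\lct(\fa)^n\ell(R/\fa)\geq(1+\epsilon_k)^{-1}\hvol$; taking the infimum over admissible $\fa$ and then $\liminf$ over $k$ yields $\widehat{\ell_{c,\infty}}\geq \hvol$.

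For the \emph{upper bound} I would use valuation ideals as test objects. Take $v\in\Val_{x,X}$ with $A_{X,D}(v)<\infty$ and rescale so that $v(\fm)=1$; this leaves $\hvol_{x,(X,D)}(v)$ unchanged. The valuation ideals $\fa_p(v)$ then satisfy $\fm^p\subseteq\fa_p(v)\subseteq\fm$, $\lct(\fa_p(v))\leq A_{X,D}(v)/v(\fa_p(v))=A_{X,D}(v)/p$, and $n!\,\ell(R/\fa_p(v))/p^n\to\vol(v)$. For any $c<\vol(v)/n!$, the ideal $\fa_p(v)$ is admissible in $\widehat{\ell_{c,p}}$ for all $p$ large, and
\[
n!\lct(\fa_p(v))^n\ell(R/\fa_p(v))\leq n!\bigl(A_{X,D}(v)/p\bigr)^n\ell(R/\fa_p(v))\xrightarrow{p\to\infty} A_{X,D}(v)^n\vol(v)=\hvol_{x,(X,D)}(v).
\]
Hence $\widehat{\ell_{c,\infty}}(x,X,D)\leq \hvol_{x,(X,D)}(v)$ for every such $v$. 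To pass from a single valuation to the infimum $\hvol(x,X,D)$, I would invoke the existence of a minimizer $v^\star$ of $\hvol_{x,(X,D)}$ (\cite{Blu16}) and simply set $c_0:=\tfrac{1}{2}\vol(v^\star)/n!$ with $v^\star$ normalized by $v^\star(\fm)=1$; applying the construction above to $v^\star$ then yields $\widehat{\ell_{c,\infty}}\leq \hvol$ for $c\leq c_0$. Properness estimates (Theorem \ref{thm:properness}) give an alternative, uniform lower bound on $\vol$ over near-minimizers that achieves the same end without invoking \cite{Blu16}.

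The principal obstacle is the precise statement and proof of Lemma \ref{colengthmult}. One must match a finite colength $\ell(R/\fa)$ (a lattice-point count in a bounded region of a local Newton--Okounkov body following \cite{Cut13,KK14}) with the multiplicity $\e(\fa)$ (the volume of an associated asymptotic convex body), and do so with explicit error estimates rather than merely in the limit. The hypothesis $\ell(R/\fa)\geq ck^n$ is precisely the quantitative input guaranteeing that the Newton--Okounkov region is thick enough in $\bR^n$ for its lattice-point count to approximate its Euclidean volume to leading order; the convex-geometric results in Appendix \ref{app:lattice} are exactly what controls this lattice-point versus volume discrepancy uniformly.
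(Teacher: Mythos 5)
Your upper bound is essentially the paper's argument (the paper uses a minimizing sequence of valuations rather than the minimizer, and gets the uniform colength bound $\ell(R/\fa_k(v_i))\geq c_0k^n$ from Li's Izumi estimate rather than from $\vol(v^\star)$; your variant is fine where the minimizer exists, and your fallback via the properness estimate covers countable $\bk$). The gap is in the lower bound. You assert that Lemma \ref{colengthmult} yields $\e(\fa)\leq n!\,\ell(R/\fa)(1+\epsilon_k)$ for every ideal in the admissible range $\fm^k\subset\fa\subset\fm$, $\ell(R/\fa)\geq ck^n$, and you justify this by saying the colength hypothesis is ``precisely the quantitative input guaranteeing that the Newton--Okounkov region is thick enough.'' That is not what the lemma says and not how the mechanism works. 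The lemma's actual hypothesis is $\fm^k\subset\fa\subset\fm^{\lceil\delta k\rceil}$: what makes the lattice-point/volume comparison close is that $\fa$ is \emph{deep} (contained in a power of $\fm$ growing linearly in $k$), which forces the complementary region $\Delta\setminus\Delta_{\fa,k}$ to contain the fixed slab $\Delta'=C(\cS)\cap\xi_{<\delta}$ of positive volume, and it is this volume gap that absorbs the $O(k^n)$ lattice-counting errors from Proposition \ref{convexgeo}. A lower bound on the \emph{colength} is a lower bound on a lattice-point count, not on a volume, so invoking it to control the lattice-point-versus-volume discrepancy is circular as written. For a singular $R$ (where Lech's inequality has the genuine factor $\e(\fm)>n!\,\ell(R/\fm)$ already at $\fa=\fm$), shallow admissible ideals such as $(f^{\lceil\epsilon' k\rceil})+\fm^k$ with $\epsilon'$ small do exist and are not covered by the lemma.

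The paper closes exactly this case by a separate branch: a Skoda-type consequence of Li's Izumi estimate gives $\lct(\fa)\geq 1/(c_1\ord_\fm(\fa))$, so any $\fa$ with $\fa\not\subset\fm^{\lceil\delta k\rceil}$ and $\ell(R/\fa)\geq ck^n$ satisfies $n!\,\lct(\fa)^n\ell(R/\fa)\geq c/(c_1^n\delta^n)\cdot n! \geq\hvol(x,X,D)$ once $\delta$ is chosen small relative to $c$; the Okounkov-body lemma is then applied only to the remaining deep ideals. (Here, in contrast to your reading, the colength hypothesis is what powers the \emph{Skoda} branch, not the Okounkov branch, and notably the result holds for \emph{every} $c>0$ in this direction.) Your version of the comparison can in fact be rescued without Skoda by the trivial dichotomy: either $n!\,\ell(R/\fa)\geq\e(\fa)$ outright, or $\e(\fa)>n!\,\ell(R/\fa)\geq n!\,ck^n$, which itself supplies the volume gap $\vol(\Delta)-\vol(\Delta_{\fa,k})\gtrsim c$ needed to run the Okounkov estimate. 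But some such argument must be supplied; as written, the shallow ideals are unaccounted for and the lower bound is incomplete.
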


\begin{proof}
We first show the ``$\leq$'' direction.
Let us take a sequence of valuations $\{v_i\}_{i\in\bN}$ such that
$\lim_{i\to\infty}\hvol(v_i)=\hvol(x,X,D)$. We may rescale
$v_i$ so that $v_i(\fm)=1$ for any $i$. Since $\{\hvol(v_i)\}_{i\in\bN}$ are
bounded from above, by \cite[Theorem 1.1]{Li15a} we know that
there exists $C_1>0$ such that $A_{X,D}(v_i)\leq C_1$ for any 
$i\in\bN$. Then by Li's Izumi type inequality \cite[Theorem 3.1]{Li15a},
there exists $C_2>0$ such that $\ord_{\fm}(f)\leq v_i(f)\leq 
C_2\ord_{\fm}(f)$ for any $i\in\bN$ and any $f\in R$.
As a result, we have $\fm^{k}\subset\fa_{k}(v_i)\subset\fm^{\lceil
k/C_2\rceil}$ for any $i,k\in \bN$.
Thus $\ell(R/\fa_k(v_i))\geq\ell(R/\fm^{\lceil k/C_2\rceil})\sim
\frac{\e(\fm)}{n!C_2^{n}}k^n$. Let us take $c_0=\frac{\e(\fm)}{2n!C_2^n}$,
then for $k\gg 1$ we have $\ell(R/\fa_k(v_i))\geq c_0 k^n$
for any $i\in\bN$. Therefore, for any $i\in\bN$ we have
\[
 \widehat{\ell_{c_0,\infty}}(x,X,D)\leq n!\liminf_{k\to\infty}\lct(\fa_k(v_i))^n\ell(R/\fa_k(v_i))
 = \lct(\fa_\bullet(v_i))^n\vol(v_i)\leq \hvol(v_i).
\]
In the last inequality we use $\lct(\fa_\bullet(v_i))\leq A_{X,D}(v_i)$
as in the proof of \cite[Theorem 27]{Liu16}.
Thus $ \widehat{\ell_{c_0,\infty}}(x,X,D)\leq\lim_{i\to\infty}\hvol(v_i)=\hvol(x,X,D)$.
This finishes the proof of the ``$\leq$'' direction.

For the ``$\geq$'' direction, we will show that 
$\widehat{\ell_{c,\infty}}(x,X,D)\geq \hvol(x,X,D)$ for
any $c>0$. By a logarithmic version of the
Izumi type estimate \cite[Theorem 3.1]{Li15a},
there exists a constant $c_1=c_1(x,X,D)>0$ such that 
$v(f)\leq c_1 A_{X,D}(v)\ord_{\fm}(f)$ for any valuation
$v\in \Val_{x,X}$ and any function $f\in R$.
For any $\fm$-primary ideal $\fa$, there exists a
divisorial valuation $v_0\in\Val_{x,X}$ computing $\lct(\fa)$
by \cite[Lemma 26]{Liu16}. Hence we have the following Skoda
type estimate:
\begin{align*}
 \lct(\fa)=\frac{A_{X,D}(v_0)}{v_0(\fa)}\geq \frac{A_{X,D}(v_0)}
 {c_1 A_{X,D}(v_0)\ord_{\fm}(\fa)} =\frac{1}{c_1\ord_{\fm}(\fa)}.
\end{align*}
Let $0<\delta<1$ be a positive number. If
$\fa\not\subset\fm^{\lceil\delta k\rceil}$ and $\ell(R/\fa)\geq c k^n$, then 
\[
\lct(\fa)^n\cdot\ell(R/\fa)\geq \frac{ ck^n}
{c_1^n(\lceil\delta k\rceil-1)^n}\geq \frac{ c}{c_1^n\delta^n}.
\]
If we choose $\delta$ sufficiently small such that
$\delta^n\cdot c_1^n\hvol(x,X,D)\leq n! c$, then for any
$\fm$-primary ideal $\fa$ satisfying $\fm^k\subset\fa\not\subset
\fm^{\lceil\delta k\rceil}$ and $\ell(R/\fa)\geq ck^n$ we have
\[
 n!\cdot \lct(\fa)^n\cdot\ell(R/\fa)\geq \hvol(x,X,D).
\]
Thus it suffices to show
\[
\hvol(x,X,D)\leq n!\cdot\liminf_{k\to\infty}\inf_{\substack{\fm^k\subset\fa\subset\fm^{\lceil\delta k\rceil}\\\ell(R/\fa)\geq ck^n}}\lct^n(\fa)\ell(R/\fa).
\]
By Lemma \ref{colengthmult}, we know that for any $\epsilon>0$
there exists $k_0=k_0(\delta,\epsilon,(R,\fm))$ such that for any $k\geq k_0$ we have
\[
n!\cdot\inf_{\fm^k\subset\fa\subset\fm^{\lceil\delta k\rceil}
}\lct^n(\fa)\ell(R/\fa)\geq (1-\epsilon)\inf_{\fm^k\subset\fa\subset\fm^{\lceil\delta k\rceil}}
\lct(\fa)^n\e(\fa)\geq (1-\epsilon)\hvol(x,X,D).
\]
Hence the proof is finished.
\end{proof}

The following result on comparison between colengths 
and multiplicities is crucial in the proof of Theorem 
\ref{hvolcolength}. Note that Lemma \ref{colengthmult}
is a special case of Lech's inequality \cite[Theorem 3]{Lec60} when $R$ is a regular
local ring.

\begin{lem}\label{colengthmult}
Let $(R,\fm)$ be an $n$-dimensional analytically irreducible
Noetherian local domain. Assume that the residue field
$R/\fm$ is algebraically closed. Then for any positive 
numbers $\delta,\epsilon\in(0,1)$, 
there exists $k_0=k_0(\delta,\epsilon,(R,\fm))$ such that 
for any $k\geq k_0$ and any ideal 
$\fm^{k}\subset\fa\subset\fm^{\lceil\delta k\rceil}$, we have
\[
n!\cdot\ell(R/\fa)\geq (1-\epsilon)\e(\fa).
\]
\end{lem}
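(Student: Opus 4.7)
My plan is to use the Newton--Okounkov body machinery for local rings, developed by Cutkosky \cite{Cut13} and Kaveh--Khovanskii \cite{KK14}, to realize $\ell(R/\fa)$ as a lattice-point count in $\bZ^n$ and $\e(\fa)/n!$ as a volume of a closely related convex body in $\bR^n$. The desired inequality then becomes a quantitative lattice-versus-volume comparison, which is delivered by the convex-geometric estimate in Appendix~\ref{app:lattice} once one checks that the sandwich hypothesis forces the relevant region to have inradius of order $\delta k$.

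First I would fix a rank-$n$ valuation $\nu\colon R\setminus\{0\}\to\bZ^n$ (endowed with lexicographic order) centered at $\fm$; such a $\nu$ can be built by refining the $\fm$-adic order via a system of parameters. Set $\Gamma := \nu(R\setminus\{0\})$ and $\Gamma(\fb) := \nu(\fb\setminus\{0\})$ for any $\fm$-primary ideal $\fb$. Since $R$ is analytically irreducible and $R/\fm$ is algebraically closed, $\nu$ has one-dimensional leaves in the sense of \cite{Cut13}, so that
\[
\ell(R/\fb) \;=\; |\Gamma\setminus\Gamma(\fb)|.
\]
Applying this to the graded family $(\fa^m)_{m\geq 1}$ and passing to the limit $m\to\infty$, the Newton--Okounkov asymptotic theorem yields a closed convex body $\Delta(\fa)\subset\bR^n_{\geq 0}$, obtained as the limit of the rescaled sets $\tfrac{1}{m}(\Gamma\setminus\Gamma(\fa^m))$, with the property that
\[
\e(\fa) \;=\; n!\,\vol\bigl(\Delta(\fa)\bigr).
\]

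Next I would exploit the sandwich $\fm^k\subset\fa\subset\fm^{\lceil\delta k\rceil}$ to pin down the geometry uniformly in $\fa$. The inclusion $\fa\supset\fm^k$ yields $\Delta(\fa)\subset\Delta(\fm^k) = k\cdot\Delta(\fm)$, placing $\Delta(\fa)$ inside a $k$-dilate of a fixed convex body and likewise bounding $\Gamma\setminus\Gamma(\fa)$; dually, $\fa\subset\fm^{\lceil\delta k\rceil}$ yields $\Delta(\fa)\supset\Delta(\fm^{\lceil\delta k\rceil}) = \lceil\delta k\rceil\cdot\Delta(\fm)$, so $\Delta(\fa)$ contains a $(\delta k)$-dilate of $\Delta(\fm)$ and therefore has inradius at least of order $\delta k$. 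The same nested containments propagate to the lattice-theoretic side. Now I would invoke Proposition~\ref{convexgeo} from Appendix~\ref{app:lattice}: for any bounded convex body $K\subset\bR^n$ of inradius at least $r$, the count $|K\cap\bZ^n|$ approximates $\vol(K)$ within a factor $(1-\epsilon)$ once $r$ is large enough in terms of $\epsilon$ and $n$. Feeding the Okounkov body $\Delta(\fa)$ and the sets $\Gamma\setminus\Gamma(\fa^m)$ into this estimate produces
\[
n!\,\ell(R/\fa) \;\geq\; (1-\epsilon)\,n!\,\vol\bigl(\Delta(\fa)\bigr) \;=\; (1-\epsilon)\,\e(\fa)
\]
uniformly in $\fa$, provided $k\geq k_0(\delta,\epsilon,(R,\fm))$.

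The step I expect to be most delicate is the uniformity of the error over the infinite family of ideals satisfying the sandwich: the Okounkov body $\Delta(\fa)$ varies with $\fa$ and the set $\Gamma\setminus\Gamma(\fa)$ is in general non-convex, so one must ensure that the lattice-versus-volume estimate from Appendix~\ref{app:lattice} depends only on $\delta$, $\epsilon$, and the ambient data $(R,\fm,\nu)$, and not on $\fa$. This uniformity is precisely what forces the upper-bound half $\fa\subset\fm^{\lceil\delta k\rceil}$ of the sandwich into the hypothesis: without a uniform lower bound on the scale of the region, lattice counts and volumes can diverge by arbitrarily large multiplicative factors, so some sandwich condition is indispensable.
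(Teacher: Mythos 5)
Your proposal assembles the right toolbox (a good valuation with one-dimensional leaves, Okounkov bodies, the lattice-count estimate of Appendix \ref{app:lattice}, and the observation that the upper containment $\fa\subset\fm^{\lceil\delta k\rceil}$ forces $\e(\fa)\gtrsim \delta^n k^n$ so that errors can be absorbed multiplicatively), but the central step is asserted rather than proved, and the objects you feed into the convex-geometry estimate are not the ones it applies to. The region $\Delta(\fa)$ obtained as the limit of $\tfrac1m(\Gamma\setminus\Gamma(\fa^m))$ is in general only \emph{co-convex} (the complement, inside the cone generated by $\Gamma$, of the convex Okounkov region of the graded family $(\fa^m)$); it need not be a convex body, so neither Proposition \ref{convexgeo} nor an inradius-based lattice estimate applies to it. More seriously, the inequality you need, $\#(\Gamma\setminus\Gamma(\fa))\geq(1-\epsilon)\vol(\Delta(\fa))$, compares a \emph{single-level} count with an \emph{asymptotic} volume computed from all powers $\fa^m$; this is exactly the content of the lemma and does not follow from any statement about lattice points in one convex body. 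Your closing paragraph correctly flags this as the delicate point, but offers no mechanism to resolve it.

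The paper resolves it by splitting the count as a difference and treating the two pieces by different tools. Using the good valuation's two-sided comparison $\ord_\fm(f)\leq\xi(\nu(f))\leq r_0\ord_\fm(f)$, one truncates the value semigroup to the bounded region $\xi\leq 2r_0k$ (the inclusion $\nu(\fa)\supset\cS\cap\xi_{\geq r_0k}$ guarantees nothing relevant is lost), and writes $\ell(R/\fa)=\#\Gamma_k-\#\Gamma^{(k)}_{\fa,1}\geq\#\Gamma_k-\#(\Delta^{(k)}_{\fa}\cap\bZ^n)$. The first term is bounded below by $(1-\epsilon_1)k^n\vol(\Delta)$ using the semigroup asymptotics of \cite{LM09} (a convex-body estimate alone cannot do this, since $\Gamma$ does not fill out all lattice points of its cone); the second term is bounded above by $k^n(\vol(\Delta_{\fa,k})+\epsilon_2)$ by applying Proposition \ref{convexgeo} to the genuinely convex body $\Delta_{\fa,k}\subset\Delta$. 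The condition $\fa\subset\fm^{\lceil\delta k\rceil}$ then enters as $\Delta_{\fa,k}\subset\Delta\setminus\Delta'$ with $\Delta'=C(\cS)\cap\xi_{<\delta}$, so $\vol(\Delta)-\vol(\Delta_{\fa,k})\geq\vol(\Delta')>0$ uniformly, which is what lets the additive errors $\epsilon_1,\epsilon_2$ be converted into the multiplicative factor $(1-\epsilon)$. Without supplying this decomposition (or an equivalent superadditivity argument), your proof does not go through.
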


\begin{proof}
By \cite[7.8]{KK14} and \cite[Section 4]{Cut13}, $R$ admits a 
\emph{good} valuation $\nu:R\to\bZ^n$ for some total order on $\bZ^{n}$.
 Let $\cS:=\nu(R\setminus\{0\})\subset\bN^n$ and $C(\cS)$ be the closed convex hull of $\cS$. Then we know that
\begin{itemize}
\item $C(\cS)$ is a strongly convex cone;
\item There exists a linear functional $\xi:\bR^n\to\bR$ such that $C(\cS)\setminus\{0\}\subset\xi_{>0}$;
\item There exists $r_0\geq 1$ such that for any $f\in R\setminus\{0\}$, we have
\begin{equation}\label{eq_izumi}
\ord_{\fm}(f)\leq \xi(\nu(f))\leq r_0\ord_{\fm}(f).
\end{equation}
\end{itemize}

Suppose $\fa$ is an ideal satisfying $\fm^k\subset\fa\subset\fm^{\lceil\delta k\rceil}$. Then we have $\nu(\fm^k)\subset\nu(\fa)\subset\nu(\fm^{\lceil\delta k\rceil})$. By \eqref{eq_izumi}, we know that 
\[
\cS\cap\xi_{\geq r_0 k}\subset 
\nu(\fa)\subset\cS\cap\xi_{\geq \delta k}.
\]
Similarly, we have $\cS\cap\xi_{\geq r_0 ik}\subset 
\nu(\fa^i)\subset\cS\cap\xi_{\geq \delta ik}$ for any positive integer $i$.

Let us define a semigroup $\Gamma\subset\bN^{n+1}$ as follows:
\[
\Gamma:=\{(\alpha,m)\in\bN^n\times\bN\colon x\in\cS\cap\xi_{\leq 2r_0 m}\}.
\]
For any $m\in\bN$, denote by $\Gamma_m:=\{\alpha\in\bN^n\colon(\alpha,m)\in\Gamma\}$. It is easy to see $\Gamma$ satisfies \cite[(2.3-5)]{LM09}, thus \cite[Proposition 2.1]{LM09} implies
\[
\lim_{m\to\infty}\frac{\#\Gamma_m}{m^n}=\vol(\Delta),
\]
where $\Delta:=\Delta(\Gamma)$ is a convex body in $\bR^{n}$ 
defined in \cite[Section 2.1]{LM09}. It is easy to see that $\Delta= C(\cS)\cap\xi_{\leq 2r_0}$.

Let us define $\Gamma^{(k)}:=\{(\alpha,i)\in\bN^n\times\bN\colon(\alpha,ik)\in\Gamma\}$. Then we know that $\Delta^{(k)}:=\Delta(\Gamma^{(k)})=k\Delta$. For an ideal $\fa$ and $k\in\bN$ satisfying $\fm^k\subset\fa\subset\fm^{\lceil\delta k\rceil}$, we define 
\[
\Gamma_{\fa}^{(k)}:=\{(\alpha,i)\in\Gamma^{(k)}\colon\alpha\in\nu(\fa^i)\}.
\]
Then it is clear that $\Gamma_{\fa}^{(k)}$ also satisfies 
\cite[(2.3-5)]{LM09}. Since  $\nu(\fa^i)=(\cS\cap\xi_{> 2r_0ik})
\cup\Gamma_{\fa,i}^{(k)}$ and $R/\fm$ is algebraically 
closed, we have $\ell(R/\fa^i)=\#(\Gamma_i^{(k)}\setminus\Gamma_{\fa,i}^{(k)})$
because $\nu$ has one-dimensional leaves. Again by \cite[Proposition 2.11]{LM09}, we have
\[
n!\e(\fa)=\lim_{i\to\infty}\frac{\ell(R/\fa^i)}{i^n}=\lim_{i\to\infty}\frac{\#(\Gamma_i^{(k)}\setminus\Gamma_{\fa,i}^{(k)})}{i^n}
=\vol(\Delta^{(k)})-\vol(\Delta_{\fa}^{(k)}),
\]
where $\Delta_{\fa}^{(k)}:=\Delta(\Gamma_{\fa}^{(k)})$. Since $\Gamma_{\fa,i}^{(k)}\subset\nu(\fa^i)\subset\xi_{\geq\delta ik}$, we know that $\Delta_{\fa}^{(k)}\subset\xi_{\geq\delta k}$. Denote by $\Delta':=C(\cS)\cap\xi_{< \delta}$, then it is clear that $\Delta_{\fa}^{(k)}\subset k(\Delta\setminus\Delta')$.

On the other hand, 
\[
\ell(R/\fa)=\#(\Gamma_{1}^{(k)}\setminus\Gamma_{\fa,1}^{(k)})
\geq\#\Gamma_{k}-\#(\Delta_{\fa}^{(k)}\cap\bZ^n).
\]
Denote by $\Delta_{\fa,k}:=\frac{1}{k}\Delta_{\fa}^{(k)}$,
then $\Delta_{\fa,k}\subset\Delta\setminus\Delta'$. Since $\vol(\Delta_{\fa,k})\leq \vol(\Delta)-\vol(\Delta')$, there exists positive numbers $\epsilon_1,\epsilon_2$ depending only on $\Delta$ and $\Delta'$ such that 
\begin{equation}\label{ineq1}
\vol(\Delta_{\fa,k})\leq\vol(\Delta)-\vol(\Delta')\leq\left(1-\frac{\epsilon_1}{\epsilon}\right)\vol(\Delta) -\frac{\epsilon_2}{\epsilon}.
\end{equation}
Let us pick $k_0$ such that for any $k\geq k_0$ and any $\fm^k\subset\fa\subset\fm^{\lceil\delta k\rceil}$, we have
\[
\frac{\#\Gamma_k}{k^n}\geq (1-\epsilon_1)\vol(\Delta),\qquad
\frac{\#(\Delta_{\fa}^{(k)}\cap\bZ^n)}{k^n}\leq\vol(\Delta_{\fa,k})+\epsilon_2.
\]
Here the second inequality is guaranteed by applying Proposition \ref{convexgeo} to $\Delta_{\fa,k}$
as a sub convex body of a fixed convex body $\Delta$. Thus
\begin{align*}
    \frac{\ell(R/\fa)-(1-\epsilon)n!\e(\fa)}{k^n}&
    \geq \frac{\#\Gamma_k}{k^n}-\frac{\#(\Delta_{\fa}^{(k)}\cap\bZ^n)}{k^n}-(1-\epsilon)(\vol(\Delta)-\vol(\Delta_{\fa,k}))\\
   & \geq (1-\epsilon_1)\vol(\Delta)-\vol(\Delta_{\fa,k})-\epsilon_2-(1-\epsilon)(\vol(\Delta)-\vol(\Delta_{\fa,k}))\\
   & =(\epsilon-\epsilon_1)\vol(\Delta)-\epsilon(\Delta_{\fa,k})-\epsilon_2\\
   &\geq 0.
\end{align*}
Here the last inequality follows from \eqref{ineq1}. Hence we finish the proof.
\end{proof}

\subsection{Normalized volumes under field extensions}\label{sec_fieldext}
In the rest of this section, we use Hilbert schemes to describe
normalized volumes of singularities after a field extension $\bK/\bk$.
Let $(X,D)$ be a klt pair over $\bk$ and $x\in X$ be a 
$\bk$-rational point. Let $Z_k:=\Spec(\cO_{x,X}/\fm_{x,X}^k)$ denote the $k$-th thickening of $x$.  Consider the Hilbert scheme
$H_{k,d}:=\mathrm{Hilb}_{d}(Z_k/\bk)$. For any field extension
$\bK/\bk$ we know that $H_{k,d}(\bK)$ parametrizes ideal sheaves
$\fc$ of $X_\bK$ satisfying $\fc\supset\fm_{x_{\bK},X_{\bK}}^k$
and $\ell(\cO_{x_{\bK},X_{\bK}}/\fc)=d$. In particular,
any scheme-theoretic point $h\in H_{k,d}$ corresponds to an ideal $\fb$ of $\cO_{x_{\kappa(h)},X_{\kappa(h)}}$ satisfying those two conditions, and we denote by $h=[\fb]$.

\begin{prop}\label{fieldext}
Let $\bk$ be a field of characteristic $0$. Let $(X,D)$ be a klt pair over $\bk$. Let $x\in X$ be a $\bk$-rational point. Then 
\begin{enumerate}
    \item For any field extension $\bK/\bk$ with $\bK$ algebraically closed, we have 
    \[
    \widehat{\ell_{c,k}}(x_{\bK}, X_{\bK},D_{\bK})=
    n!\cdot\inf_{d\geq ck^n, ~ [\fb]\in H_{k,d}}
    d\cdot\lct(X_{\kappa([\fb])}, D_{\kappa([\fb])};\fb)^n.
    \]
    \item With the assumption of (1), we have $$\hvol(x_{\bK}, X_{\bK},D_{\bK})=\hvol(x_{\bar{\bk}}, X_{\bar{\bk}},D_{\bar{\bk}}).$$
\end{enumerate}
\end{prop}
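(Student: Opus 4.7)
The plan is to prove (1) by using the Hilbert schemes $H_{k,d}$ to translate the infimum over $\bK$-valued ideals into an infimum over scheme-theoretic points, and then deduce (2) formally from (1) together with Theorem~\ref{hvolcolength}.

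For (1), a $\bK$-valued ideal $\fa \subset \cO_{x_{\bK},X_{\bK}}$ with $\fm^k \subset \fa$ and $\ell(\cO_{x_\bK,X_\bK}/\fa) = d$ is precisely a $\bK$-point of $H_{k,d}$, which factors uniquely through a scheme-theoretic point $[\fb] \in H_{k,d}$ together with an embedding $\kappa([\fb]) \hookrightarrow \bK$. Flatness of the universal family gives $\fa = \fb \cdot \cO_{x_\bK,X_\bK}$ and $d = \ell(\cO_{x_{\kappa([\fb])}, X_{\kappa([\fb])}}/\fb)$, while invariance of log canonical thresholds under arbitrary field extensions in characteristic zero (base-change a log resolution of the triple) yields
\[
\lct(X_\bK, D_\bK; \fa) = \lct(X_{\kappa([\fb])}, D_{\kappa([\fb])}; \fb).
\]
Thus every $\bK$-valued ideal gives a scheme-theoretic point with matching $d$ and lct, which establishes the ``$\geq$'' direction of the formula.

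For ``$\leq$'', I would show that the infimum on the right is attained at (the image of) some $\bK$-point. The function $F \colon [\fb] \mapsto d\cdot\lct(X_{\kappa([\fb])}, D_{\kappa([\fb])}; \fb)^n$ on each $H_{k,d}$ is constructible: stratify $H_{k,d}$ by its irreducible components, normalize each such component to a normal variety $\tilde V$, pull back the universal ideal, and apply Proposition~\ref{lctsemicont}(1) to the $\bQ$-Gorenstein flat family $(X\times_\bk\tilde V, D\times_\bk\tilde V)\to \tilde V$; constructibility descends from $\tilde V$ to the component via Chevalley, using invariance of lct under the finite surjective map $\tilde V \to V$. Hence $F$ takes only finitely many values on $\bigsqcup_{d\ge ck^n} H_{k,d}$, so its infimum is realized on a nonempty constructible locus $C$. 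Any such $C$ contains a closed point of some $H_{k,d}$, whose residue field is finite over $\bk$ (as $H_{k,d}$ is of finite type over $\bk$) and therefore embeds into $\bar\bk \subset \bK$. The resulting $\bK$-point realizes the infimum, completing (1).

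Part (2) is then essentially formal. Since the right-hand side of (1) is defined purely in terms of the $\bk$-scheme $H_{k,d}$ and the values of a constructible function at its scheme-theoretic points, it is independent of the algebraically closed extension $\bK/\bk$; hence $\widehat{\ell_{c,k}}(x_\bK, X_\bK, D_\bK) = \widehat{\ell_{c,k}}(x_{\bar\bk}, X_{\bar\bk}, D_{\bar\bk})$. Taking $\liminf_{k\to\infty}$ extends this to $\widehat{\ell_{c,\infty}}$, and Theorem~\ref{hvolcolength} converts the equality into $\hvol(x_\bK, X_\bK, D_\bK) = \hvol(x_{\bar\bk}, X_{\bar\bk}, D_{\bar\bk})$ as soon as $c$ is chosen smaller than both of the (a priori different) constants $c_0$ produced there.

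\textbf{Main obstacle.} The chief technical wrinkle is the constructibility of $F$ on $H_{k,d}$: Proposition~\ref{lctsemicont}(1) requires a normal base, whereas the Hilbert scheme is typically reducible and non-reduced. The stratification-plus-normalization reduction is routine in spirit but must be carried out carefully, ensuring at each step that the pullback of the universal ideal remains controlled and that the base-change invariance of lct lets constructibility descend back to $H_{k,d}$.
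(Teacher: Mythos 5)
Your proposal is correct and follows essentially the same route as the paper: factor each $\bK$-ideal through a scheme-theoretic point of $H_{k,d}$ using invariance of the log canonical threshold under field extension, then use constructibility/semicontinuity of the lct function (Proposition~\ref{lctsemicont}) to reduce the infimum to closed points, which lift to $\bK$-points since their residue fields embed into $\bK$; part (2) then follows from Theorem~\ref{hvolcolength}. Your extra care on two points — normalizing the components of $H_{k,d}$ before invoking Proposition~\ref{lctsemicont}, and taking $c$ below both constants $c_0$ in part (2) — is warranted and consistent with how the paper handles the same issues in the proof of Theorem~\ref{weaksc}.
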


\begin{proof} (1) We first prove the ``$\geq$'' direction.
By definition, $\widehat{\ell}_{c,k}(x_{\bK}, X_{\bK}, D_{\bK})$ is the 
infimum of $n!\cdot\lct(X_{\bK},D_{\bK};\fc)^n\ell(\cO_{X_{\bK}}/\fc)$ where
$\fc$ is an ideal on $X_{\bK}$ satisfying $\fm_{x_{\bK}}^k\subset
\fc\subset\fm_{x_{\bK}}$ and $\ell(\cO_{X_{\bK}}/\fc)=:d\geq ck^n$. Hence $[\fc]$ represents a point in 
$H_{k,d}(\bK)$. Suppose $[\fc]$ is lying over a scheme-theoretic point $[\fb]\in H_{k,d}$, then it is clear that $(X_{\bK},D_{\bK},\fc)\cong (X_{\kappa([\fb])},D_{\kappa([\fb])},\fb)\times_{\Spec(\kappa([\fb]))}\Spec(\bK)$. Hence $\lct(X_{\bK},D_{\bK};\fc)=\lct(X_{\kappa([\fb])},D_{\kappa([\fb])};\fb)$ by \cite[Proposition 7.13]{JM12}, and the ``$\geq$'' direction is proved.

Next we prove the ``$\leq$'' direction. By Proposition \ref{lctsemicont}, we know that the function $[\fb]\mapsto\lct(X_{\kappa([\fb])},D_{\kappa([\fb])}; \fb)$ on $H_{k,d}$ is constructible and lower semicontinuous.
Denote by $H_{k,d}^{\mathrm{cl}}$ the set of closed points in $H_{k,d}$. Since the set of closed points are dense in any stratum of $H_{k,d}$ with respect to the $\lct$ function, we have the following equality:
\[
n!\cdot\inf_{d\geq ck^n, ~ [\fb]\in H_{k,d}}
    d\cdot\lct(X_{\kappa([\fb])}, D_{\kappa([\fb])};\fb)^n
    =n!\cdot\inf_{d\geq ck^n, ~ [\fb]\in H_{k,d}^{\mathrm{cl}}}
    d\cdot\lct(X_{\kappa([\fb])}, D_{\kappa([\fb])};\fb)^n
\]
Any $[\fb]\in H_{k,d}^{\mathrm{cl}}$ satisfies that $\kappa([\fb])$ is an algebraic extension of $\bk$. Since $\bK$ is algebraically closed, $\kappa([\fb])$ can be embedded into $\bK$ as a subfield. Hence there exists a point $[\fc]\in H_{k,d}(\bK)$ lying over $[\fb]$. Thus similar arguments implies that $\lct(X_{\bK},D_{\bK};\fc)=\lct(X_{\kappa([\fb])},D_{\kappa([\fb])};\fb)$, and the ``$\leq$'' direction is proved.
\medskip

(2) From (1) we know that $\widehat{\ell_{c,k}}(x_{\bK},X_{\bK},D_{\bK})=\widehat{\ell_{c,k}}(x_{\bar{\bk}},X_{\bar{\bk}},D_{\bar{\bk}})$ for any $c,k$. Hence it follows from Theorem \ref{hvolcolength}.
\end{proof}

The following corollary is well-known to experts.
We present a proof here using normalized volumes.

\begin{cor}
 Let $(Y,E)$ be a log Fano pair over
 a field $\bk$ of characteristic 
 $0$. The following are equivalent:
 \begin{enumerate}[label=(\roman*)]
  \item $(Y_{\bar{\bk}},E_{\bar{\bk}})$ is log K-semistable;
  \item $(Y_{\bK},E_{\bK})$ is log K-semistable
  for some field extension $\bK/\bk$ with $\bK=\overline{\bK}$;
  \item $(Y_{\bK},E_{\bK})$ is log K-semistable
  for any field extension $\bK/\bk$ with $\bK=\overline{\bK}$.
 \end{enumerate}
 We say that $(Y,E)$ is \emph{geometrically log K-semistable} if 
 one (or all) of these conditions holds.
\end{cor}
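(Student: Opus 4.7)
The plan is to combine the cone characterization of log K-semistability in Theorem \ref{ksscone} with the field-extension invariance of normalized volumes in Proposition \ref{fieldext}(2). The implications (iii) $\Rightarrow$ (ii) and (iii) $\Rightarrow$ (i) are trivial (take $\bK = \bar{\bk}$), so it suffices to show (i) $\Leftrightarrow$ (ii) $\Leftrightarrow$ (iii); this will follow once I know that the log K-semistability of $(Y_\bK, E_\bK)$ depends only on $\bk$ and not on the particular algebraically closed extension $\bK/\bk$.

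First I would choose $r \in \bN$ such that $L := -r(K_Y+E)$ is a Cartier divisor on $Y$, and form the affine cone $X := \Spec \bigoplus_{m \geq 0} H^0(Y, L^{\otimes m})$ over $\bk$, with $\bk$-rational cone vertex $x$ and $\bQ$-divisor $D$ corresponding to $E$. Because $Y$ is proper and $L$ is a line bundle, flat base change for cohomology yields $H^0(Y_\bK, L_\bK^{\otimes m}) \cong H^0(Y, L^{\otimes m}) \otimes_\bk \bK$ for every field extension $\bK/\bk$, so $(x_\bK \in (X_\bK, D_\bK))$ is canonically identified with the affine cone construction over $(Y_\bK, L_\bK, E_\bK)$, and in particular $x$ remains $\bk$-rational after base change.

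Now for any algebraically closed $\bK \supset \bk$, Theorem \ref{ksscone} asserts that $(Y_\bK, E_\bK)$ is log K-semistable if and only if
\[
\hvol(x_\bK, X_\bK, D_\bK) = r^{-1}(-K_{Y_\bK} - E_\bK)^{n-1}.
\]
The intersection number on the right is invariant under field extension and equals $r^{-1}(-K_Y - E)^{n-1}$, while Proposition \ref{fieldext}(2) identifies the left-hand side with $\hvol(x_{\bar{\bk}}, X_{\bar{\bk}}, D_{\bar{\bk}})$. Thus the displayed equality holds for some algebraically closed extension $\bK/\bk$ if and only if it holds for every such $\bK$, which gives (i) $\Leftrightarrow$ (ii) $\Leftrightarrow$ (iii).

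The only substantive verification is the compatibility of the cone construction with base change, together with the fact that $(X, D)$ is a klt pair over $\bk$ with $\bk$-rational singular point at $x$, so that Proposition \ref{fieldext} is applicable. Both are standard consequences of the cone construction underlying Theorem \ref{ksscone}, and no genuine obstacle is anticipated beyond these routine checks.
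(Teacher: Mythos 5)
Your proposal is correct and is essentially the paper's own argument: form the affine cone $X=C(Y,L)$ with $L=-r(K_Y+E)$, apply Theorem \ref{ksscone} over each algebraically closed extension, and conclude via the invariance of $\hvol$ under such extensions from Proposition \ref{fieldext}(2). The extra checks you flag (compatibility of the cone with base change, rationality of the vertex) are routine and implicitly assumed in the paper.
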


\begin{proof}
Let us take the affine cone $X=C(Y,L)$ with $L=-r(K_Y+E)$  Cartier. Let $D$ be the $\bQ$-divisor on $X$ corresponding to $E$. Denote by$x\in X$ the cone vertex of $X$. Let $\bK/\bk$ be a field extension with $\bK=\overline{\bK}$. 
Then  Theorem \ref{ksscone} implies that $(Y_{\bK},E_{\bK})$ is log K-semistable if and only if $\hvol(x_{\bK}, X_{\bK},D_{\bK})=r^{-1}(-K_Y-E)^{n-1}$. Hence the corollary is a consequence of Proposition \ref{fieldext} (2).
\end{proof}

We finish this section with a natural speculation. Suppose 
$x\in (X,D)$ is a klt singularity over a field $\bk$ of characteristic
zero that is not necessarily algebraically closed. The definition of
normalized volume of singularities extend verbatimly to
$x\in (X,D)$ which we also denote by $\hvol(x,X,D)$. Then
we expect $\hvol(x,X,D)=\hvol(x_{\bar{\bk}},X_{\bar{\bk}}, D_{\bar{\bk}})$,
i.e. normalized volumes are stable under base change to algebraic
closures. Such a speculation should be a consequence of the \emph{Stable
Degeneration Conjecture (SDC)} stated in \cite[Conjecture 7.1]{Li15a}
and \cite[Conjecture 1.2]{LX17} which roughly says that a $\hvol$-minimizing
valuation $v_{\min}$ over $x_{\bar{\bk}}\in (X_{\bar{\bk}}, D_{\bar{\bk}})$ is unique and quasi-monomial,
so $v_{\min}$ is invariant under the action of $\mathrm{Gal}(\bar{\bk}/\bk)$
and hence has the same normalized volume as its restriction
to $x\in (X,D)$.

\section{Uniform approximation of volumes by colengths}\label{sec:unifapp}

In this section, we prove the following result that gives an approximation of the volume of valuation by the colengths of its valuation ideals. 
The 
result  is a consequence of arguments in \cite[Section 3.4]{Blu16} (which in turn relies on ideas in \cite{ELS03})
and properties of the Hilbert--Samuel function.

\begin{thm}\label{thm:volcol}
Let $ \pi : (\cX, \cD) \to T$  together with a section $\sigma:T \to \cX$ be a $\bQ$-Gorenstein flat family of klt singularities. Set $n=\dim(\cX)-\dim(T)$.
For every $A\in \bR_{>0}$ and $\epsilon>0$, 
there exists a positive integer $N$ so that the following holds: 
If $t\in T$ and $v\in \Val_{\sgt, \cXgt}$ satisfies $v(\fm_{\sgt} )=1$ and $A_{\cXgt,\cDgt}(v)\leq A$, 
then
\[
\frac{ \ell ( \cO_{\sgt,\cXgt }/ \fa_{ m}(v))}{m^n/ n!} 
\leq 
\vol(v) + \epsilon 
\]
for all positive integers $m$ divisible by $N$.
\end{thm}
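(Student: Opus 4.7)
The plan is to adapt Blum's pointwise approximation \cite[Section 3.4]{Blu16} to the family setting, obtaining uniformity in $t$ via constructibility of the Hilbert--Samuel function (Appendix \ref{app:HS}). The essential pointwise content is an effective rate-of-convergence estimate of the form
\[
\frac{n!\,\ell(\cO_{\sgt,\cXgt}/\fa_m(v))}{m^n} \leq \vol(v) + \frac{C}{m},
\]
valid for all $m$ divisible by some $N_0$, where $C$ depends only on the log discrepancy bound $A$ and on the singularity $(\sgt \in \cXgt, \cDgt)$. Given such a bound with $C$ uniform over $t \in T$, the theorem is immediate: take $N$ to be any multiple of $N_0$ with $C/N < \epsilon$. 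The divisibility condition on $m$ arises from the subadditivity $\cJ(\fa_\bullet(v)^{kN_0}) \subset \cJ(\fa_\bullet(v)^{N_0})^k$ of asymptotic multiplier ideals, which is the sole source of arithmetic restriction in the multiplier-ideal argument.

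For the pointwise step over a single geometric fiber, I would follow Blum closely. The standard Izumi estimate \cite[Theorem 3.1]{Li15a} combined with $A_{\cXgt,\cDgt}(v) \leq A$ and the normalization $v(\fm_{\sgt}) = 1$ produces a two-sided sandwich $\fm_{\sgt}^m \subset \fa_m(v) \subset \fm_{\sgt}^{\lceil m/K \rceil}$ for an Izumi constant $K = K(A, \cXgt, \cDgt)$. Combining Skoda-type containments, the subadditivity above, and the identity $\vol(v) = \vol(\fa_\bullet(v)) = \lim_m \e(\cJ(\fa_\bullet(v)^m))/m^n$---that is, the fact that the volume is computed by the asymptotic multiplier ideals of the graded sequence $\fa_\bullet(v)$---then yields the desired colength estimate for $m = kN_0$.

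To upgrade to uniformity in $t$, note that under the Izumi sandwich, for each fixed $m$ the valuation ideals $\fa_m(v) \subset \cO_{\sgt,\cXgt}$ arising from some $v \in \Val_{\sgt,\cXgt}$ with $v(\fm_{\sgt}) = 1$ and $A_{\cXgt,\cDgt}(v) \leq A$ (as both $v$ and $t \in T$ vary) are parameterized by a subset of the relative Hilbert scheme of ideal quotients of $\cO_\cX / \fm_\sigma^m$, a scheme of finite type over $T$. By Appendix \ref{app:HS} the Hilbert--Samuel function of the universal ideal on this Hilbert scheme is constructible over $T$ (hence so is the multiplicity function), and by Proposition \ref{lctsemicont} the log canonical threshold is also constructible. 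Noetherian induction on $T$ then reduces the problem to finitely many locally closed strata on each of which all invariants entering $C$ are uniformly bounded, and a common multiple of the resulting finitely many $N_0$'s gives a single $N$ that works for every $t \in T$. The principal technical obstacle is careful bookkeeping: one must track how each ingredient of $C$---notably the Izumi constant $K(A, \cXgt, \cDgt)$, the Hilbert--Samuel invariants of $\fm_\sgt$, and the lct values entering the multiplier-ideal computation---depends on the singularity, and then invoke the corresponding constructibility statement. The uniform control of the Izumi constant is especially delicate and may require invoking (or developing along the way) a family version of Izumi in the spirit of Section \ref{sec:izumi}.
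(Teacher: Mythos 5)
Your overall architecture is the same as the paper's: a pointwise ELS/Blum-type approximation of $\vol(v)$ by normalized colengths of valuation ideals, made uniform in $t$ by parametrizing the relevant ideals in relative Hilbert schemes and invoking the finiteness/constructibility of Hilbert--Samuel functions (Appendix \ref{app:HS}) together with Noetherian induction on $T$. There are, however, two places where the proposal as written does not close.

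The substantive gap is the pointwise estimate on \emph{singular} fibers. The Skoda-plus-subadditivity mechanism you invoke, and the identity $\vol(v)=\lim_m \e(\cJ(\fa_\bullet(v)^m))/m^n$, only work verbatim when $\sgt$ is a smooth point of $\cXgt$ not on $\Supp(\cDgt)$; in general every containment must be corrected by the Jacobian ideal (this is exactly \cite[Theorem 7.2]{Blu16}, i.e.\ $(\Jac_X\cdot\cO_X(-rD))^\ell\,\fa_{(m+A)\ell}\subset\overline{\fa_m^\ell}$). After Teissier's Minkowski inequality the resulting bound is
\[
\frac{\hs(\fa_m(v))^{1/n}}{m}\;\leq\;\vol(v)^{1/n}+\frac{\lceil A\rceil\,\hs(\fm_{\sgt})^{1/n}}{m}+\frac{\hs\bigl(\Jac_{\cXgt}\cdot\cO(-r\cDgt)+\fm_{\sgt}^m\bigr)^{1/n}}{m},
\]
and the last term decays only like $m^{-1/n}$ (since $\hs(\Jac+\fm^m)=O(m^{n-1})$), not like $C/m$. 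More importantly, its decay must be made uniform in $t$, and this is not a consequence of the constructibility of any single numerical invariant you list: the paper devotes a separate argument to it (Proposition \ref{prop:jacbounded}, via Lech's inequality, generic flatness, and induction on $\dim T$). Your proposal never identifies this term, yet it is the dominant error and the main new difficulty in the singular/family setting; the ``bookkeeping'' you describe has no entry for it.

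Two smaller points. The Izumi estimate is not needed here: the only containments used are $\fm_{\sgt}^m\subset\fa_m(v)\subset\fm_{\sgt}$, which follow from $v(\fm_{\sgt})=1$ alone, so the ideals $\fa_{m'}(v)$ already live in a fixed finite union of Hilbert schemes $\Hilb_d(\cZ_{m'}/T)$; the family Izumi estimate of Section \ref{sec:izumi} enters only in Theorem \ref{thm:convncol}, and importing it here adds exactly the ``especially delicate'' dependence you worry about, gratuitously. Finally, the divisibility $N=m'\cdot M$ in the paper comes not from subadditivity of asymptotic multiplier ideals but from the two-step structure: first fix $m'$ making the multiplicity estimate good, then take $\ell$-th powers, use $\fa_{m'}(v)^\ell\subset\fa_{m'\ell}(v)$, and convert multiplicities to colengths via the finiteness of Hilbert--Samuel functions (Proposition \ref{prop:hsbounded}).
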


 We begin by approximating the volume of a valuation by the multiplicity of its valuation ideals. 
\begin{prop}\label{prop:ELS}
Let $x\in (X,D)$ be a klt singularity defined over an algebraically closed field $\bk$ and $r$ a positive integer such that $r(K_{X}+D)$ is Cartier.
Fix $v\in \Val_{x,X}$ satisfying $v(\fm_x)=1$ and $A_{X,D}(v)<+\infty$. 
\begin{itemize}
\item[(a)] If $x\in X_{\sing} \cup \Supp(D)$, then for all $m \in \bZ_{>0}$ we have
\begin{equation*}
\frac{
\hs(\fa_m(v))^{1/n}}{m} 
\leq 
\vol(v)^{1/n} + 
\frac{ \lceil{A_{X,D}(v)\rceil} \hs(\fm_x)^{1/n} }{m} + 
\frac{ \hs\left(  \cO_{X}(-rD)\cdot  \Jac_X+\fm_x^{m} \right)^{1/n}}{m}.
\end{equation*}

\item[(b)] If $x\notin  X_{\sing} \cup \Supp(D)$, then for all $m \in \bZ_{>0}$ we have
\begin{equation*}
\frac{
\hs(\fa_m(v))^{1/n}}{m} 
\leq 
\vol(v)^{1/n} + 
\frac{ \lceil{A_{X,D}(v)\rceil} \hs(\fm_x)^{1/n} }{m}.
\end{equation*}
\end{itemize}
\end{prop}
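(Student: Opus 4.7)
The proof follows the strategy of Ein--Lazarsfeld--Smith \cite{ELS03}, as adapted to the klt setting in \cite[Section 3.4]{Blu16}. Set $A := A_{X,D}(v)$ and $R := \cO_{x,X}$, and introduce the asymptotic multiplier ideal $\cJ_\lambda := \cJ((X,D); \fa_\bullet(v)^\lambda)$ of the graded sequence of valuation ideals. The log-resolution definition yields $v(\cJ_\lambda) > \lambda - A$, which gives the key containment $\cJ_{m + \lceil A \rceil} \subseteq \fa_m(v)$; in particular, $\hs(\fa_m(v)) \leq \hs(\cJ_{m + \lceil A\rceil})$ (smaller ideals have larger Hilbert--Samuel multiplicity).

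In the smooth case (b), subadditivity of multiplier ideals gives $\cJ_\lambda^k \supseteq \cJ_{k\lambda}$ for all $k$. Combined with the asymptotic identity $\lim_{\lambda \to\infty} \ell(R/\cJ_\lambda)/(\lambda^n/n!) = \vol(v)$ (which itself follows from the sandwich $\overline{\fa_\lambda(v)} \subseteq \cJ_\lambda \subseteq \fa_{\lceil \lambda - A \rceil}(v)$ and the definition of $\vol(v)$), one obtains $\hs(\cJ_\lambda) \leq \lambda^n \vol(v)$ for every $\lambda$. Applied at $\lambda = m + \lceil A \rceil$, this yields $\hs(\fa_m(v))^{1/n} \leq (m + \lceil A\rceil)\vol(v)^{1/n}$. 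Dividing by $m$ and using $\vol(v) \leq \hs(\fm_x)$ (a consequence of $\fm_x^m \subseteq \fa_m(v)$, which follows from $v(\fm_x)=1$) produces (b).

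The klt case (a) proceeds along the same outline, except that subadditivity of multiplier ideals now requires the Jacobian correction factor $\cO_X(-rD)\cdot \Jac_X$ (see \cite[Section 3.4]{Blu16} for the precise statement in this setting). Because this factor is not $\fm_x$-primary in general, its interaction with Hilbert--Samuel multiplicities requires thickening by $\fm_x^m$. A careful bookkeeping combining the corrected subadditivity inclusion with the Minkowski--Teissier inequality $\hs(\fa \cdot \fb)^{1/n} \leq \hs(\fa)^{1/n} + \hs(\fb)^{1/n}$ for $\fm_x$-primary ideals produces the additional error term $\hs(\cO_X(-rD)\cdot \Jac_X + \fm_x^m)^{1/n}/m$ in the final estimate.

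The main technical point is the bookkeeping in case (a): the Jacobian correction factor is not $\fm_x$-primary, so its contribution to Hilbert--Samuel multiplicities is a priori infinite, and the corrected subadditivity inclusion does not directly yield a finite length estimate. The thickening by $\fm_x^m$ is the device that converts this inclusion into a meaningful multiplicity bound, and the extra term in (a) is precisely what the thickening contributes. For (b) no such thickening is needed, and the argument is significantly cleaner.
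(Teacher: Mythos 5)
Your proposal is correct and follows essentially the same route as the paper: the paper black-boxes the multiplier-ideal input you re-derive into the single inclusion $(\Jac_X\cdot\cO_X(-rD))^{\ell}\,\fa_{(m+A)\ell}(v)\subset(\fa_m(v))^{\ell}$ of \cite[Theorem 7.2]{Blu16}, and the ``careful bookkeeping'' you allude to in case (a) is exactly the paper's chain — pass to $(\Jac_X\cdot\cO_X(-rD)+\fm_x^m)^{\ell}\,\fa_{(m+A)\ell}(v)\subset\overline{(\fa_m(v))^{\ell}}$ using $\fm_x^m\subset\fa_m(v)$, apply Teissier--Minkowski, divide by $m\ell$, and let $\ell\to\infty$. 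The remaining steps (the bound $\vol(v)\le\hs(\fm_x)$ and the triviality of the correction factor near a smooth point for (b)) match the paper as well.
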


\begin{proof}
Fix $v\in \Val_{x,X}$ satisfying $v(\fm_x)=1$ and $A_{X,D}(v) < +\infty$. 
To simplify notation, we set $\ab:= \ab(v)$ and $A:= \lceil A_{X,D}(v) \rceil$. 
By \cite[Theorem 7.2]{Blu16}, 
\begin{equation}\label{eq:ELS3}
(\Jac_X \cdot \cO_{X}(-rD) )^\ell \fa_{(m+A) \ell} \subset   \left(\fa_{m} \right)^\ell
\end{equation}
for all $m,\ell \in \bZ_{>0}$. 
Since $v(\fm_x)=1$, we see
$\fm_x^m \subset \fa_m$ for all $m\in \bZ_{>0}$. 
As in the proof of \cite[Proposition 3.7]{Blu16}, 
it follows from the previous inclusion combined with  \eqref{eq:ELS3} that
\begin{equation}
(\Jac_X \cdot \cO_X(-rD) +\fm_x^m)^\ell  \fa_{(m+A)\ell} \subset   \overline{{(\fa_{m})}^\ell}.
\end{equation}
for all $m \in \bZ_{>0}$. 
We now apply Teissier's Minkowski inequality \cite[Example 1.6.9]{LazPAG} to the previous inclusion and find that
\begin{equation}\label{eq:Min}
\ell \hs(  \fa_m  )^{1/n} 
\leq
  \ell \cdot  \hs(  \Jac_X \cdot \cO_{X}(-rD)+\fm_x^{m} )^{1/n} + 
 \hs( \fa_{(m+A)\ell})^{1/n}.
\end{equation}
Dividing both sides of \eqref{eq:Min} by $m \cdot \ell$ and taking the limit as $\ell \to \infty$ gives
\[
\frac{\hs(\fa_m)^{1/n}}{m} 
\leq 
\frac{\hs( \Jac_X \cdot \cO_X(-rD)+\fm_x^{m} )^{1/n} }{m} + 
 \left( \frac{m+A}{m} \right)\vol(v)^{1/n}.
\]
Since $\fm_x^m\subset \fa_m$ for all $m\in \bZ_{>0}$, $\vol(v) \leq \hs(\fm_x)$ and the desired inequality follows. In the case when $x \notin X_{\sing} \cup \Supp(D)$, the stronger inequality follows from a similar argument and the observation that $(\Jac_X \cdot \cO_{X}(-rD) )$ is trivial in a neighborhood of $x$.
\end{proof}

 Before proceeding, we recall the following defintion of the Jacobian ideal. If $X$ is a variety of dimension $n$, then the \emph{Jacobian ideal} of $X$, denoted $\Jac_X$, the $n$-fitting ideal of $\Omega_X$. More generally, if $\pi:\cX \to T$ is flat morphism of varieties and $n=\dim(\cX)-\dim(T)$, then 
 the \emph{Jacobian ideal} of $\pi$, denoted $\Jac_{\cX/T}$, is $n$-th fitting ideal of $\Omega_{\cX/T}$.

\begin{prop}\label{prop:jacbounded}
With the same assumptions  as in Theorem \ref{thm:volcol}, fix a positive integer $r$ such that $r(K_{\cX/T}+\cD)$ is Cartier. Then, for every $\epsilon>0$, there exists $M$ so that the following holds: If $t\in T$ satisfies  $\st \in V(\Jac_{ \cXt}) \cup \Supp(\cDt)$, then
\[
\frac{\hs\left(\Jac_{\cXt}\cdot \cO_{\cXt}(-r \cDt) + \fm_{\st}^m \right) }{m^n} \leq \epsilon .
\]
for all $m \geq M$.
\end{prop}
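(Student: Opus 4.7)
The plan is to combine a pointwise decay statement --- namely that for each fixed geometric point $\gt$ of $T$ the quantity $\hs(J_\gt+\fm_{\sgt}^m)/m^n$ tends to $0$ as $m\to\infty$ --- with a uniformity argument based on Noetherian induction on $T$ and the constructibility of Hilbert--Samuel multiplicities in families (Appendix \ref{app:HS}). Throughout I write $\mathcal{J}:=\Jac_{\cX/T}\cdot\cO_\cX(-r\cD)$ for the relative ideal sheaf on $\cX$; base change of K\"ahler differentials together with the hypothesis that $r(K_{\cX/T}+\cD)$ is Cartier (and $\Jac_{\cX/T}$ is a Fitting ideal, which commutes with base change) gives $\mathcal{J}|_{\cXgt}=\Jac_{\cXgt}\cdot\cO_{\cXgt}(-r\cDgt)$. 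I also write $\cI_\sigma$ for the ideal sheaf of the section $\sigma(T)\subset\cX$, so that $\cI_\sigma|_{\cXgt,\sgt}=\fm_{\sgt}$. The quantity to bound is then the Hilbert--Samuel multiplicity of the fiber of the globally defined ideal $\mathcal{J}+\cI_\sigma^m$ at $\sgt$.

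For the pointwise decay, fix $\gt$ and set $J_\gt:=(\mathcal{J}|_{\cXgt})_{\sgt}\subset R_\gt:=\cO_{\sgt,\cXgt}$. Since $\cXgt$ is normal, hence generically smooth in characteristic zero, and since $\cDgt$ does not contain $\cXgt$, the ideal $J_\gt$ is a non-zero proper ideal of the $n$-dimensional local ring $R_\gt$; in particular $\dim R_\gt/J_\gt\leq n-1$. A direct calculation --- for instance, by completing a general non-zero element of $J_\gt$ to a system of parameters of $R_\gt$ and comparing $(J_\gt+\fm_{\sgt}^m)^k$ to $\fm_{\sgt}^{km}$ in the associated graded ring --- yields $\hs(J_\gt+\fm_{\sgt}^m)=O(m^{n-1})$ as $m\to\infty$, whence $\hs(J_\gt+\fm_{\sgt}^m)/m^n\to 0$.

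To promote this to uniform decay across $t$, I would run a Noetherian induction on $T$. It suffices to produce a non-empty Zariski open $U\subset T$ and an integer $M_U$ such that the desired bound holds for every $t\in U$ satisfying the hypothesis $\st\in V(\Jac_{\cXt})\cup\Supp(\cDt)$ and every $m\geq M_U$, since the normalization of each irreducible component of $T\setminus U$ carries an induced $\bQ$-Gorenstein flat family of klt singularities to which the induction applies. To build such a $U$, fix a geometric point $\bar\eta$ over the generic point $\eta$ of $T$, invoke the previous paragraph to choose $M$ with $\hs(J_{\bar\eta}+\fm_{\sigma(\bar\eta)}^m)/m^n\leq \epsilon/2$ for all $m\geq M$, and apply the constructibility, and hence generic upper semicontinuity, of Hilbert--Samuel multiplicities in families from Appendix \ref{app:HS} to the coherent ideal $\mathcal{J}+\cI_\sigma^m$ to obtain an open neighborhood of $\eta$ on which the multiplicity is dominated by its generic value.

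The main technical obstacle is that the neighborhood so produced a priori depends on $m$, while we require a single $U$ working for all $m\geq M$ simultaneously. To overcome this, I would treat the entire graded family $\{\mathcal{J}+\cI_\sigma^m\}_{m\geq 1}$ at once, using the multiplicative inclusion $(\mathcal{J}+\cI_\sigma^m)(\mathcal{J}+\cI_\sigma^{m'})\subset \mathcal{J}+\cI_\sigma^{m+m'}$. Applying generic flatness either to the Rees-type algebra $\bigoplus_m(\mathcal{J}+\cI_\sigma^m)z^m$ or, equivalently, to a suitable model of the normalized blow-up of $\mathcal{J}$ on $\cX$ produces a single dense open $U\subset T$ on which the fiberwise Hilbert--Samuel polynomials of $(\mathcal{J}+\cI_\sigma^m)_{\st}$ (as polynomials in an auxiliary variable $k$) are independent of $t\in U$ for every $m$ simultaneously. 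Combined with the generic decay of the previous paragraph, this yields the uniform $M_U$ and closes the Noetherian induction.
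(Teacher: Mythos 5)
Your overall architecture (pointwise decay, promoted to a uniform bound on a dense open subset of $T$, then Noetherian induction) matches the paper's, and the pointwise estimate $\hs(J_{\gt}+\fm_{\sgt}^m)=O(m^{n-1})$ is correct. The gap is in the uniformity step: you have correctly identified the obstacle (the open set produced by constructibility for a fixed $m$ depends on $m$), but your proposed fix does not work. The graded $\cO_{\cX}$-algebra $\bigoplus_{m}(\mathcal{J}+\cI_{\sigma}^m)z^m$ is not finitely generated in general, so generic flatness cannot be applied to it. Already for $R=\bk[x,y]$, $\mathcal{J}=(x)$, $\cI_{\sigma}=(x,y)$ one has $\mathcal{J}+\cI_{\sigma}^m=(x,y^m)$, and $xz^m$ is never a combination of products of lower-degree elements of the algebra (the degree-$m$ piece of the subalgebra generated in degrees $<m$ lies in $(x^2,xy,y^m)z^m$), so a new generator is required in every degree. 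The alternative suggestion of a ``suitable model of the normalized blow-up of $\mathcal{J}$'' is too vague to evaluate; the ideals $\mathcal{J}+\cI_{\sigma}^m$ are not the valuation ideals of any fixed model, and simultaneous control over all $m$ is precisely what is at stake.

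The paper's resolution is more elementary and is the ingredient you are missing. Over a dense affine open $U$ one chooses a single section $g$ of $\Jac_{\cX/T}\cdot\cO_{\cX}(-r\cD)$ whose restriction $g_t$ to $\cO_{\st,\cXt}$ is nonzero for all $t\in U$, bounds
\[
\hs\left(\Jac_{\cXt}\cdot\cO_{\cXt}(-r\cDt)+\fm_{\st}^m\right)\leq \hs\left((g_t)+\fm_{\st}^m\right)\leq n!\cdot \ell\left(R_t/\widetilde{\fm}_t^{\,m}\right)\cdot\hs(\fm_{\st}),
\]
the last step by Lech's inequality, where $R_t=\cO_{\st,\cXt}/(g_t)$ and $\widetilde{\fm}_t=\fm_{\st}\cdot R_t$. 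This converts the two-parameter problem (ideals indexed by $m$, varying with $t$) into the Hilbert--Samuel \emph{function} of the single ideal $\widetilde{\fm}_t$ in the family of $(n-1)$-dimensional rings $R_t$, to which Proposition \ref{prop:hs} applies and gives, after shrinking $U$, constancy in $t$ of $\ell(R_t/\widetilde{\fm}_t^{\,m})$ for all $m$ simultaneously; since $\ell(R_t/\widetilde{\fm}_t^{\,m})=O(m^{n-1})$, a single $M$ works on all of $U$. To complete your argument you would need to replace the Rees-algebra step by this (or an equivalent) reduction to a single ideal in a flat family.
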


\begin{proof}
To simplify notation, set $Z= \{ t\in T \, \vert \,  \st  \in V(\Jac_{ \cXt}) \cup \Supp(\cDt) \}$. 
 We will prove the following claim: for each $\epsilon>0$, there exists a nonempty open set $U\subset T$ and a positive integer $M$ such that if $t\in U \cap Z$, then
 \[
\frac{\hs(\Jac_{\cXt}\cdot \cO_{\cXt}(-r \cDt) \ + \fm_{\st}^m) }{m^n} \leq \epsilon\]
 for all $m \geq M$. 
By inducting on the dimension of $T$, the result will follow.

  We proceed to prove the claim. It is enough to consider the case when $\cX$ and $T$ are affine, since we may replace $\pi$ with its restiction to a nonempty open subset of $T$ and $\cX$ with an open subset containing $\sigma(T)$. 
 Next, 
note that   $\Jac_{\cXt} =\Jac_{\cX/T} \cdot \cO_{\cX}$ for each $t\in T$,  since the formation of fitting ideals commute with base change \cite[Tag 0C3D]{SPA}. Hence, $Z = \sigma^{-1}(V(\Jac_{\cX/T} ) \cup \Supp(\cD) )$ and is closed in $T$. 
  Now, if $T\setminus Z \neq \emptyset$, then the above claim (trivially) holds with  $U= T\setminus Z$. Therefore, we consider the case when $Z=T$. 

Choose
 a nonempty affine open set $U\subset T$  and  $g\in\Jac_{\cX/T} \cdot \cO_{X}(-r\cD)( \pi^{-1}(U))$ such that  the restriction of $g$ to $\cO_{\st,\cXt}$, denoted $g_t$, is nonzero for all $ t \in U$. Set $R_t := \cO_{\st,\cXt}/(g_t)$  and $\widetilde{\fm}_t = \fm_{\st} \cdot  R_t$ for each $t\in U$. Now,
\[
\hs( \Jac_{\cXt} \cdot \cO_{\cXt}(-r\cDt) + \fm_{\sigma(t)}^m) \leq \hs((g_t)+ \fm_{\sigma(t)}^m)  \leq  n!  \cdot \ell (\cO_{\st,\cXt} /((g_t)+ {\fm_{\st}}^m)) \cdot \hs( \fm_{\st} ),\]
where the first inequality follows from the inclusion $ \Jac_{\cXt} \cdot \cO_{\cXt}(-r\cDt)+ \fm_{\sigma(t)}^m \subset 
(g_t)+ \fm_{\sigma(t)}^m$ and the second is precisely Lech's inequality \cite[Theorem 3]{Lec60}.
Thus, 
\begin{equation}\label{eq:jacbounded}
\frac{\hs( \Jac_{\cXt}\cdot \cO_{\cXt}(-r\cDt) + \fm_{\st}^m)}{m^n}  \leq 
n!\left( \frac{  \ell( R_t/  \widetilde{\fm}_{t}^m  )}{m^n} \right)  \cdot  \hs( \fm_{\st} )  .\end{equation}
for all $t\in U$.
By Proposition \ref{prop:hs}, we may shrink $U$  so that $U\ni t \mapsto \hs( \fm_{\st})$ and 
 $U\ni t \mapsto \ell( R_t/\widetilde{\fm}_{t}^m))$, for all $m\geq 1$, are constant.  Since $\dim R_t=n-1$, 
we have $  \ell( R_t/\widetilde{\fm}_{t}^m)= O(m^{n-1})$. Therefore, there exists an integer $M$ so that 
\[
n!\left( \frac{  \ell( R_t/ {\widetilde{\fm}_{t}}^m)}{m^n} \right)  \cdot  \hs(  \fm_{\st} ) \leq \epsilon
\]
for all $m \geq M$ and $t\in U$. This completes the claim.\end{proof}

The following proposition is a consequence of results in Appendix \ref{app:HS}.

\begin{prop}\label{prop:hsbounded}
Keep the assumptions and notation in Theorem \ref{thm:volcol}, and fix an integer $k \in \bZ_{>0}$. 
Then, for any $\epsilon>0$, 
there exists $M \in \bZ_{>0}$ so that the following holds: 
For any point $t\in T$ and ideal $\fa \subset \cO_{\sgt ,\cXgt}$ satisfying $ \fm_{\sgt}^k\subset \fa \subset \fm_{\sgt}$,
\[
\frac{
\ell ( \cO_{\sgt,\cXgt}/ \fa^{m})}{m^n/ n!} \leq \hs(\fa) + \epsilon
\]
for all $m \geq M$. 
\end{prop}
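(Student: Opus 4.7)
The plan is to parametrize all ideals $\fa$ satisfying the hypotheses by a scheme of finite type over $T$, then use the constructibility of Hilbert--Samuel functions in families (Appendix \ref{app:HS}) to reduce the uniform bound to a finite case check. First, by applying the constructibility result to the ideal $\fm_{\sigma}$ in the flat family $\pi:\cX\to T$ and performing noetherian induction on $T$, I may reduce to the case where the function $t\mapsto \ell(\cO_{\sgt}/\fm_{\sgt}^k)$ is constantly equal to some integer $D$ on $T$. Hence every ideal $\fa$ satisfying $\fm_{\sgt}^k\subset \fa\subset \fm_{\sgt}$ has colength in $\{1,\dots,D-1\}$, giving a uniform bound on colengths.

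For each such $d$, the relative Hilbert scheme $\Hilb_d(\cX/T)$ of length-$d$ quotients of $\cO_{\cX}$ cosupported along $\sigma(T)$ is proper over $T$, and its geometric points lying over $t$ correspond to ideals of $\cO_{\sgt,\cXgt}$ of colength $d$. I let $H\subset \bigsqcup_d \Hilb_d(\cX/T)$ be the locally closed subscheme cut out by the extra condition $\fm_{\sigma}^k\subset \mathcal{A}$ on the universal ideal; then $H$ is of finite type over $T$ and parametrizes exactly the ideals under consideration. Set $\cX_H := \cX \times_T H \to H$ with pulled back section $\sigma_H$ and universal ideal $\mathcal{A}\subset \cO_{\cX_H}$.

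Now I would apply the family version of Proposition \ref{prop:hs} (Appendix \ref{app:HS}) to $\cX_H \to H$ with the ideal $\mathcal{A}$: this yields a finite stratification $H=\bigsqcup_i S_i$ into locally closed subschemes such that on each $S_i$ the function $h\mapsto \ell(\cO_{\sigma_H(\bar h),\cX_{H,\bar h}}/\mathcal{A}_{\bar h}^m)$ is constant in $h\in S_i$ for every $m\geq 1$, and agrees with a single polynomial $P_i(m)= \hs_i\cdot m^n/n! + O(m^{n-1})$ for all $m\geq m_i$, with leading coefficient $\hs_i = \hs(\mathcal{A}_h)$ constant on $S_i$. For each $i$, I pick $M_i\geq m_i$ so that $P_i(m)/(m^n/n!)\leq \hs_i +\epsilon$ whenever $m\geq M_i$, and set $M := \max_i M_i$; this is finite since there are only finitely many strata, and it yields the asserted uniform bound.

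The main technical obstacle lies in the third paragraph: the appendix must produce a single stratification on which all the fiberwise colengths $\ell(\cO/\mathcal{A}_{\bar h}^m)$, $m\geq 1$, are simultaneously constant and governed by a common polynomial past a uniform threshold. This subsumes the constructibility of the Hilbert--Samuel multiplicity in families together with the compatibility of $\mathcal{A}_{\bar h}^m$ with restriction from the total space $\cX_H$ to the geometric fiber $\cX_{H,\bar h}$, so that the polynomial $P_i$ legitimately governs every $h\in S_i$ rather than merely a single closed point.
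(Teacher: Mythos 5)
Your proposal is correct and follows essentially the same route as the paper: parametrize the ideals by a relative Hilbert scheme, invoke Proposition \ref{prop:hs} to conclude that only finitely many Hilbert--Samuel functions occur, and choose $M$ uniformly from that finite list (the "technical obstacle" you flag in the last paragraph is exactly what Proposition \ref{prop:hs} together with Lemma \ref{lem:grflat} supplies). The only cosmetic difference is that the paper works with $\Hilb_m(\cZ_k/T)$ for the $k$-th thickening $\cZ_k$ of the section, which builds in the condition $\fm_{\sgt}^k\subset\fa$ and properness for free, whereas you carve a locally closed locus out of $\Hilb_d(\cX/T)$ and add a Noetherian-induction step to make $\ell(\cO/\fm^k)$ constant.
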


\begin{proof}
Set $d := \max\{ \ell(\cO_{\sgt,\cXgt }/\fm_{\sgt}^k ) \, \vert \, t\in T \}$, 
and consider the union of Hilbert schemes $\cH := \bigcup_{m=1}^{d} \Hilb_m({\cZ}_k/T)$,
where ${\cZ}_k= \Spec( \cO_{\cX}/ \cI_{\sigma(T)}^k)$. Let
$\tau$ denote the morphism $\cH \to T$. A point $h \in H$ 
corresponds to the ideal $\fb_h = \fb \cdot \cO_{\cX\times_T \kappa(h)}$, where $\fb$ is the universal ideal sheaf on $\cX \times_T \cH$. 
By applying Proposition \ref{prop:hs} to the irreducible components of $\cH$ endowed with reduced scheme structure, we see that the set of functions $\{ \HS_{\fb_{h}} \, \vert \, h \in H \}$  is finite. 

Next, fix $\epsilon>0$. By the previous paragraph, there exists $M\in \bZ_{>0}$ so that 
\begin{equation}\label{eq:hsbounded}
\frac{ \HS_{\fb_h}(m)
)}{m^n/ n!} \leq \hs(\fb_h) + \epsilon
\end{equation}
for all $m \geq M$. 
Now, consider a point $t\in T$  and an ideal $\fa \subset \cO_{\cXgt}$  satisfying 
$\fm_{\sgt}^k \subset \fa \subset \fm_{\sgt}$. 
Since 
\[
\ell( \cO_{\sgt,\cXgt}/ \fa)
\leq
 \ell( \cO_{\sgt,\cXgt}/ \fm_{\sigma(\overline{t})} ^{k} ) 
=
 \ell( \cO_{\st, \cXt }/ \fm_{\sigma(t)}^{k} ) \leq d,\]
 there is a map  $\rho : \Spec (\overline{\kappa(t)}) \to \cH$  such that $\fa =\fb_{\rho(0) } \cdot \cO_{\cX \times_T \overline{\kappa(t)}}$. Therefore, $\HS_{\fa}= \HS_{\fb_{\rho(0)}}$ and \eqref{eq:hsbounded} implies
 \[
\frac{ \ell ( \cO_{\sgt, \cXgt }/ \fa^{m})}{m^n/ n!} \leq \hs(\fa) + \epsilon
 \]
 for all $m\geq M$. 
\end{proof}

We will now deduce Theorem \ref{thm:volcol} from Propositions \ref{prop:ELS}, \ref{prop:jacbounded}, and \ref{prop:hsbounded}.

\begin{proof}[Proof of Theorem \ref{thm:volcol}]
To simplify notation, we set
\[
W_{t} =\{ v\in \Val_{\sgt,\cXgt} \,\vert \,   v(\fm_{\sgt})=1\, \text{ and } A_{\cXgt,\cDgt}(v) \leq A\}\]
for each $t\in T$. In order to prove the theorem, it suffices to prove the following claim: for every $\epsilon>0$, there exists an integer $N$ so that if $t\in T$, then 
\[
\left(
\frac{ \ell ( \cO_{\sgt,\cXgt}/ \fa_{ m}(v))}{(m)^n/ n!} \right)^{1/n} \leq \vol(v)^{1/n} + \epsilon 
\]
for all  $v\in W_t$ and  $m\in \bZ_{>0}$ divisible by $N$. Indeed, if $v\in W_t$, then $\vol(v) \leq \hs(\fm_{\sgt})$. Since the set $\{\hs(\fm_{\sgt}) \, \vert \, t\in T\}$ is bounded from above by Proposition \ref{prop:hs}, the claim implies the conclusion of the theorem.

We now fix $\epsilon>0$ and proceed to bound the latter two terms in Proposition \ref{prop:ELS}.1.
First, we apply Proposition  \ref{prop:hsbounded} to find a positive integer $M_1$ so that 
\[
 \frac{ A\cdot  \hs(\fm_{\sgt})^{1/n} }{M_1}  \leq \epsilon /4
\]
for all $t\in T$. Next, we apply Proposition \ref{prop:jacbounded} to find a positive integer $M_2$ so that the following holds: if $t\in T$ and $\sgt \in   V(\Jac_{\cXgt}) \cup \Supp(\cDgt)$, then
\[ 
 \frac{ \hs( \Jac_{\cXgt} \cdot \cO_{\cXgt}(-r\cDgt)+{\fm_{\sgt}}^{m'})^{1/n}}{m'} 
=
\frac{ \hs( \Jac_{\cXt} \cdot \cO_{\cXt}(-r\cDt)+{\fm_{\st}}^{m'})^{1/n}}{m'} 
< \epsilon/4.\]
for all $m' \geq M_2$. Now, set $m' := \max \{M_1,M_2\}$. Proposition \ref{prop:ELS}  implies that if $t\in T$, then 
\begin{equation}\label{eq:volcol1}
\frac{\hs(\fa_{m'}(v))^{1/n}}{m'} \leq \vol(v)^{1/n}+ \epsilon/2
\end{equation}
for all $v\in W_t$.

Next, note that if $t\in T$ and $v\in W_t$, then $ \fm_{\sgt}^{m'} \subset \fa_{m'}(v)$. Therefore, we may  apply Proposition \ref{prop:hsbounded}  to find an integer $M$ such that if $t\in T$ and $v\in W_t$, then
 \begin{equation}\label{eq:volcol2}
\left(
\frac{
\ell ( \cO_{\sgt,\cXgt }/ (\fa_{m'}(v))^\ell )}{( m' \cdot   \ell )^n/ n!}  \right)^{1/n} \leq \frac{\hs(\fa_{m'}(v))^{1/n} }{m'} + \epsilon/2
\end{equation}
for all $\ell \geq M$. Thus, if $t\in T$ and $v\in W_t$, then
\[
\left(
\frac{\ell ( \cO_{\sgt ,\cXgt }/ (\fa_{m'  \cdot \ell }(v)) )}{(m'  \cdot \ell )^n/ n!}  
\right)^{1/n}   
 \leq 
\frac{\hs(\fa_{m'}(v))^{1/n}}{m'} + \epsilon/2 \leq 
 \vol(v) + \epsilon  \]
 for all $\ell \geq M$,
 where the first inequality follows from \eqref{eq:volcol2} combined with the inclusion $\fa_{m'}(v)^\ell \subset \fa_{m'  \cdot  \ell}(v)$ and the second inequality  from \eqref{eq:volcol1}. Therefore, setting $N:=m' \cdot M$ completes the claim. 
\end{proof}

\section{Li's Izumi and properness estimates in families}\label{sec:izumi}

In this section, we generalize results of \cite{Li15a} to families of klt singularities. These results will be used to prove Theorem \ref{thm:convncol}. 

\begin{thm}[Izumi-type Estimate]\label{thm:izumifamily}
Let $\pi: (\cX,\cD) \to T$ together with a section $\sigma : T \to \cX$ be a $\bQ$-Gorenstein flat family of klt singularities over a variety $T$. There exists a constant $K_0>0$ so that the following holds: If $t\in T$ and $v\in \Val_{\sgt,\cXgt}$  satisfies $A_{\cXgt,\cDgt}(v)<+\infty$, then
\[ 
v(g) \leq  K_0 \cdot A_{\cXgt,\cDgt}(v)  \cdot \ord_{\sgt}(g)
\]
for all $g\in \cO_{\sgt,\cXgt}$. 
\end{thm}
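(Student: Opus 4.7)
The plan is to use Noetherian induction on $T$: assuming the statement for every proper closed subvariety of $T$, it suffices to produce a uniform constant on some dense open $U\subseteq T$. The base case where $T$ is a single closed point is precisely Li's Izumi estimate \cite[Theorem 3.1]{Li15a}. Taking the maximum of the finitely many constants produced by the induction then yields the desired $K_0$.

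For the inductive step, take a projective log resolution $\mu:\cY\to\cX$ of the pair $(\cX,\cD+\cI_{\sigma(T)})$. Using generic flatness, generic smoothness, and the $\bQ$-Gorenstein hypothesis on $K_{\cX/T}+\cD$ (so that flat base change applies to the relative canonical divisor), there exist a dense open $U\subset T$ and prime components $E_1,\dots,E_r$ of $\mu^{-1}(\sigma(T))$ dominating $T$ such that for every $t\in U$ the restriction $\mu_{\gt}:\cY_{\gt}\to\cXgt$ is a log resolution of $(\cXgt,\cDgt+\fm_{\sgt})$, the $E_{i,\gt}$ are prime divisors on $\cY_{\gt}$ with $\fm_{\sgt}\cdot\cO_{\cY_{\gt}}=\cO_{\cY_{\gt}}(-\sum a_i E_{i,\gt})$ and coefficients $a_i>0$ independent of $t$, and the log discrepancies $A_i:=A_{\cXgt,\cDgt}(\ord_{E_{i,\gt}})$ are constant in $t\in U$.

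For $t\in U$ and $v\in\Val_{\sgt,\cXgt}$ with $A_{\cXgt,\cDgt}(v)<+\infty$, I follow Li's two-step strategy applied fiberwise to $\mu_{\gt}$. The \emph{classical Izumi} step produces $v(g)\le C_1\cdot v(\fm_{\sgt})\cdot\ord_{\sgt}(g)$ with $C_1$ depending only on the combinatorial data $(E_{i,\gt},a_i)$ of the log resolution (for instance via the Rees valuations of $\fm_{\sgt}$ read off from $\mu_{\gt}$), hence uniform on $U$. The \emph{properness} step produces $v(\fm_{\sgt})\le C_2\cdot A_{\cXgt,\cDgt}(v)$ with $C_2=\max_i a_i/A_i$: for quasi-monomial valuations along the SNC divisor $\sum E_{i,\gt}$ this follows immediately from the log discrepancy formula $A(v)=\sum\lambda_i A_i$ combined with $v(\fm_{\sgt})=\sum\lambda_i a_i$, and the inequality extends to arbitrary $v$ via the quasi-monomial retraction construction of \cite{JM12,BdFFU15} together with lower semicontinuity of $A_{\cXgt,\cDgt}$. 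Multiplying the two bounds yields the uniform Izumi constant $K_0|_U\le C_1C_2$.

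The main obstacle is the properness estimate for valuations that are not quasi-monomial along $\mu_{\gt}$: the passage from the quasi-monomial case to arbitrary $v$ relies on the retraction construction and continuity properties of \cite{JM12,BdFFU15}, which must be applied carefully in the family setting to ensure that the bounds on distinct geometric fibers $\cXgt$ all derive from the \emph{same} numerical data coming from a single simultaneous log resolution. The classical Izumi portion is more routine once the resolution has been spread out. Noetherian induction on $T\setminus U$ then concludes the argument.
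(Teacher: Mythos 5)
Your overall architecture (Noetherian induction on $T$, spreading out a single log resolution, constancy of the relevant numerical data on a dense open set) matches the paper's proof. But the way you split Li's argument into two steps contains a genuine gap: your ``classical Izumi step,'' namely $v(g)\le C_1\cdot v(\fm_{\sgt})\cdot\ord_{\sgt}(g)$ with $C_1$ uniform over \emph{all} valuations $v\in\Val_{\sgt,\cXgt}$, is false. Already for $X=\bA^2$, $x=0$, take $v$ the monomial valuation with weights $(1,s)$ and $g=y$: then $v(\fm_x)=1$, $\ord_x(g)=1$, but $v(g)=s$ is unbounded. Classical Izumi comparisons with a uniform constant hold only within a \emph{fixed finite} set of divisorial valuations (e.g.\ the Rees valuations of $\fm_x$, or the $\ord_{E_i}$ on the resolution); for an arbitrary $v$ the constant necessarily degenerates, and the whole point of Li's estimate is that this degeneration is controlled by the factor $A_{\cXgt,\cDgt}(v)$. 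Your step (ii) cannot repair this, because $v(\fm_{\sgt})\le C_2\,A(v)$ (which is indeed true --- it is immediate from $\lct(\fm_{\sgt})\le A(v)/v(\fm_{\sgt})$ and a uniform lower bound on $\lct(\fm_{\sgt})$, no quasi-monomial retraction needed) only multiplies a bound that was already invalid. In the example, the product of your two claimed bounds happens to give a true final inequality, but the first factor in the product is simply not available.

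The correct decomposition, which is the one the paper uses, inserts the factor $A(v)$ at a different place. One first proves a genuinely classical Izumi statement that involves only the fixed divisorial valuations attached to the resolution $\rho_{\gt}:\cY_{\gt}\to\cXgt$: for every closed point $y\in\rho_{\gt}^{-1}(\sgt)$ one has $\ord_y(\rho_{\gt}^*g)\le C_0\,\ord_{\sgt}(g)$, where $C_0$ is computed from the coefficients $a_i$ of $\fm_{\sgt}\cdot\cO_{\cY_{\gt}}$ and the intersection numbers $(E_i\cdot E_j\cdot L^{n-2})$, hence is constant on a dense open subset of $T$ (Proposition \ref{prop:classicalizumi}; this uses $\widehat{\ord}_{\sgt}$, the Skoda-type comparison $\ord\le\widehat{\ord}\le(n+1)\ord$, and a connectedness/chain argument through the exceptional divisors --- which is also where one needs the $E_{i,\gt}$ to be geometrically irreducible, handled in the paper by an \'etale base change via \cite[Tags 0551, 0559]{SPA} and glossed over in your sketch). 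Separately, for an \emph{arbitrary} $v$ with center containing $y$, one has $v(g)\le C_1\,A_{\cXgt,\cDgt}(v)\,\ord_y(\rho_{\gt}^*g)$ with $C_1=1/\epsilon$, where the discrepancies of the resolution are $>-1+\epsilon$ (Proposition \ref{prop:partofizumi}); this is where $A(v)$ enters, and it requires no properness estimate and no case analysis on quasi-monomial versus general valuations. Multiplying the two gives $K_0=C_0C_1$ on the open set. Note also that the properness estimate (Theorem \ref{thm:properness}) is logically \emph{downstream} of the Izumi estimate in the paper, not an ingredient of it, so the ``main obstacle'' you identify is not where the difficulty lies.
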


\begin{thm}[Properness Estimate]\label{thm:properness}
Let $\pi: (\cX,\cD) \to T$ together with a section $\sigma : T \to \cX$ be a $\bQ$-Gorenstein flat family of klt singularities over a variety $T$. There exists a constant $K_1>0$ so that the following holds: 
If $t\in T$ and $v\in \Val_{\sgt,\cXgt}$ satisfies $A_{\cXgt,\cDgt}(v)<+\infty$, then 
\[
  \frac{K_1 \cdot A_{\cXgt,\cDgt}(v)}{v(\fm_{\sgt})} \leq A_{\cXgt,\cDgt}(v)^n  \cdot \vol(v).
\]
\end{thm}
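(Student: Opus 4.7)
\noindent\emph{Proof plan.} Both sides of the inequality are invariant under rescaling $v\mapsto \lambda v$ for $\lambda>0$, so we may normalize $v$ to satisfy $v(\fm_{\sgt})=1$; writing $A(v)$ for $A_{\cXgt,\cDgt}(v)$, the claim reduces to showing $A(v)^n\vol(v)\geq K_1\cdot A(v)$ for a uniform constant $K_1>0$. My plan is to adapt Li's fiberwise argument \cite[Theorem 3.2]{Li15a} to the family setting, tracking all constants so that they may be chosen uniformly in $t$.

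The main inputs would be: (i) the uniform Izumi estimate just established (Theorem \ref{thm:izumifamily}), which gives $K_0>0$ with $\fa_m(v)\subset \fm_{\sgt}^{\lceil m/(K_0 A(v))\rceil}$; combined with $\fm_{\sgt}^m\subset \fa_m(v)$ (from $v(\fm_{\sgt})=1$), this sandwiches the valuation ideals between two powers of $\fm_{\sgt}$; (ii) constructibility of the Hilbert--Samuel multiplicity $t\mapsto \e(\fm_{\sgt})$ in families (via the results of Appendix \ref{app:HS}), furnishing a uniform positive lower bound on $\e(\fm_{\sgt})$ over a constructible stratification of $T$; and (iii) the variational characterization $\hvol(\sgt,\cXgt,\cDgt)=\inf_\fa \lct(\fa)^n \e(\fa)$ from Theorem \ref{eqhvol}, together with the constructibility/semicontinuity of $\lct$ in families (Proposition \ref{lctsemicont}).

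Passing to the limit in the sandwich in (i) yields the crude bound $\vol(v)\geq \e(\fm_{\sgt})/(K_0 A(v))^n$, and hence $A(v)^n\vol(v)\geq \e(\fm_{\sgt})/K_0^n$. Combined with (ii), this uniform constant lower bound already suffices to prove the properness estimate whenever $A(v)$ is bounded above. For large $A(v)$ one instead needs genuine linear growth of $\hvol(v)$ in $A(v)$; following Li I would exploit the graded sequence $\fa_\bullet(v)$, using $\lct(\fa_m(v))\leq A(v)/m$ together with $\lct(\fa_m(v))^n \e(\fa_m(v))\geq \hvol(\sgt,\cXgt,\cDgt)$ to produce lower bounds on $\e(\fa_m(v))$, and then invoke a Newton--Okounkov-body / convex-geometric argument in the spirit of Lemma \ref{colengthmult} to upgrade these to the desired linear lower bound on $A(v)^n\vol(v)$. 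Uniformity over $T$ would then follow by Noetherian induction, using (ii), (iii), and the uniformity of $K_0$.

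The main obstacle I anticipate is precisely the extraction of linear (as opposed to merely constant) growth of $\hvol(v)$ in $A(v)$: the sandwich coming from Izumi alone is too coarse, as the bound it gives is independent of $A(v)$. In Li's single-singularity proof this is the delicate step, and translating it to families requires verifying that the relevant convex-geometric data varies constructibly across fibers—which should follow from (ii) and (iii) above but is not automatic.
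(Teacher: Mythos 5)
Your overall strategy --- make the Izumi constant uniform over $T$ and then run Li's fiberwise properness argument --- is exactly the route the paper takes: its proof of Theorem \ref{thm:properness} consists of nothing more than combining Theorem \ref{thm:izumifamily} with \cite[Theorem 4.1]{Li15a}, whose constant depends only on the dimension and the Izumi constant, so no further constructibility input or Noetherian induction is needed at this stage. Your normalization, the sandwich $\fm_{\sgt}^m\subseteq\fa_m(v)\subseteq\fm_{\sgt}^{\lceil m/(K_0A)\rceil}$, and the resulting treatment of the bounded-$A(v)$ regime are all correct.

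The gap is in the large-$A(v)$ regime, which you rightly identify as the crux but do not resolve, and the specific tools you propose would not resolve it. Write $A=A_{\cXgt,\cDgt}(v)$ and normalize $v(\fm_{\sgt})=1$. Combining $\lct(\fa_m(v))\le A/m$ with $\lct(\fa_m(v))^n\e(\fa_m(v))\ge\hvol(\sgt,\cXgt,\cDgt)$ yields only $\e(\fa_m(v))\ge\hvol\cdot m^n/A^n$, hence $A^n\vol(v)\ge\hvol(\sgt,\cXgt,\cDgt)$ --- the same constant bound you already had, not the linear one. Likewise, any Okounkov-body argument that uses only the two-sided inclusion of $\fa_m(v)$ between powers of $\fm_{\sgt}$ confines the complement of the associated convex body to a region of size $\sim 1/A$ in all $n$ directions and again produces $\vol(v)\gtrsim A^{-n}$. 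The missing input is precisely \cite[Theorem 4.1]{Li15a}, which the paper simply cites; the key point there is to exploit the normalization through a specific element: choose $h\in\fm_{\sgt}$ with $v(h)=1$, so that, $v$ being a valuation, one has the decomposition
\[
\ell(R/\fa_m(v))=\sum_{j=1}^{m}\ell\bigl(R/(hR+\fa_j(v))\bigr),
\]
and the Izumi estimate applied on the $(n-1)$-dimensional ring $R/hR$ bounds each summand below by $\gtrsim (j/(K_0A))^{n-1}$; summing over $j\le m$ gives $\ell(R/\fa_m(v))\gtrsim m^n/A^{n-1}$, hence $\vol(v)\gtrsim A^{-(n-1)}$, which is the required linear growth of $A^n\vol(v)$ in $A$. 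Since the constant produced this way depends only on $n$ and $K_0$, uniformity over $T$ is automatic once Theorem \ref{thm:izumifamily} is in place; the appeals to constructibility of $\e(\fm_{\st})$ and of $\lct$ in families, and the Noetherian induction, are not needed for this particular theorem.
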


The proofs of these theorems rely primarily on the result and techniques found in \cite{Li15a}. The main new ingredient can be found in Proposition \ref{prop:classicalizumi}, which is proved using arguments of \cite{BFJ14} and \cite[Appendix II]{Li15a}.

\subsection{Order functions}

Let $X$ be a normal variety defined over an algebraically closed field $\bk$ and $x \in X$ a closed point. 
For $g\in \cO_{x,X}$ the \emph{order of vanishing} of $g$ at $x$ is defined as
\[
\ord_x(g) := \max \{ j \geq 0 \, \vert \, g\in \fm_x^j \} .\]
If $X$ is smooth at $x$, then $\ord_x$ is a valuation of the function field of $X$. In the singular case, $\ord_x$ may fail to be a valuation. For example, the inequality 
\[
\ord_{x}(g^{n+n'}) \geq \ord_x(g^{n}) + \ord_x(g^{n'})\] may be strict. Following \cite{BFJ14}, we consider an alternative function $\widehat{\ord}_x$, which is defined by
\[
\widehat{\ord}_x(g) := \lim_{n \to \infty} \frac{1}{n} \ord_x(g^n) = \sup_{n} \frac{1}{n} \ord_x(g^n)
.\] 

Let $\nu : X^+ \to X$ denote the normalized blowup of $\fm_x$
and write
\[
\fm_{x} \cdot \cO_{X^+} = \cO_{X}\left( - \sum_{i=1}^{r} a_i E_i \right),
\]
where the $E_i$ are prime divisors on $X$ and each $a_i \in \bZ_{>0}$. The following statement, which was proved in \cite[Theorem 4.3]{BFJ14}, gives an interpretation of  $\widehat{\ord}_x$ in terms of the exceptional divisors of $\nu$. 

\begin{prop}\label{prop:ordhat}
For any function $g\in \cO_{x,X}$ and $m \in \bZ_{>0}$, 
\begin{enumerate}
\item $
\widehat{\ord}_x(g)  = \min_{i=1,\ldots,r} \frac{ \ord_{E_i}(g)}{a_i}$ and 
\item 
$\widehat{\ord}_x(g) \geq m$ if and only if $g \in \overline{ \fm_x^m}$.
\end{enumerate}
\end{prop}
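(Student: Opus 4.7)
The plan is to derive both parts from a single integral-closure identity: for each $m \in \bZ_{>0}$,
\[
g \in \overline{\fm_x^m} \iff \ord_{E_i}(g) \geq m\, a_i \text{ for all } i = 1, \ldots, r.
\]
I would establish this via the universal property of the normalized blowup: since $\fm_x \cdot \cO_{X^+}$ is the invertible sheaf $\cO_{X^+}(-E)$ with $E := \sum_i a_i E_i$, one has the standard identification $\overline{\fm_x^m} = \nu_\ast \cO_{X^+}(-mE)$ as ideal sheaves on $X$ (this is the usual presentation of the integral closure through any normal modification on which the ideal becomes invertible). Thus $g \in \overline{\fm_x^m}$ is equivalent to $\nu^\ast g$ vanishing to order at least $m a_i$ along each $E_i$, giving the displayed equivalence.

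Setting $w(g) := \min_{i} \ord_{E_i}(g)/a_i$, part (2) is immediate once part (1) is known, since $\widehat{\ord}_x(g) \geq m$ iff $w(g) \geq m$ iff $\ord_{E_i}(g) \geq m a_i$ for all $i$ iff $g \in \overline{\fm_x^m}$. For part (1) I would prove $\widehat{\ord}_x(g) = w(g)$ by two separate inequalities. The upper bound $\widehat{\ord}_x(g) \leq w(g)$ is essentially tautological: from $g^n \in \fm_x^{\ord_x(g^n)}$ one obtains $n\, \ord_{E_i}(g) = \ord_{E_i}(\nu^\ast g^n) \geq \ord_x(g^n)\cdot a_i$ for every $i$, hence $\ord_x(g^n)/n \leq w(g)$ for every $n$.

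The reverse inequality is the subtle step: by the displayed equivalence, $g^n \in \overline{\fm_x^{\lfloor nw(g)\rfloor}}$, but we need genuine membership in some $\fm_x^{j}$ with $j$ close to $nw(g)$. For this I would invoke the finite generation of the integral closure $\overline S$ of the Rees algebra $S := \bigoplus_{j\geq 0}\fm_x^j t^j$ inside $\cO_{x,X}[t]$ as an $S$-module, which holds because $\cO_{x,X}$ is essentially of finite type over a field and hence Nagata. Picking a finite set of homogeneous generators of $\overline S$ in degrees at most some $c$ yields $\overline{\fm_x^j} \subset \fm_x^{j-c}$ for all $j \geq c$; applying this with $j = \lfloor nw(g) \rfloor$ gives $\ord_x(g^n) \geq \lfloor nw(g)\rfloor - c$, and dividing by $n$ and letting $n \to \infty$ produces $\widehat{\ord}_x(g) \geq w(g)$. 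The main obstacle is precisely this passage from integral closure to ordinary ideals: without the Nagata/finiteness input one only obtains that $\widehat{\ord}_x(g)$ governs the filtration by \emph{integral closures} of powers of $\fm_x$, and a bounded-defect comparison is essential to conclude equality with $w(g)$.
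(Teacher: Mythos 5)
Your proof is correct. Note that the paper does not actually prove this proposition: it is quoted from Boucksom--Favre--Jonsson, \emph{A refinement of Izumi's theorem}, Theorem 4.3, so there is no in-text argument to compare against. What you have written is a complete, self-contained proof along the standard lines of that reference: the identification $\overline{\fm_x^m}=\nu_*\cO_{X^+}(-mE)$ (equivalently, the valuative criterion for integral closure via the Rees valuations $\ord_{E_i}$), the tautological inequality $\widehat{\ord}_x(g)\le\min_i\ord_{E_i}(g)/a_i$ from $\fm_x^j\cdot\cO_{X^+}=\cO_{X^+}(-jE)$, and the reverse inequality via module-finiteness of the integral closure of the Rees algebra (valid here since $\cO_{x,X}$ is excellent, being essentially of finite type over a field), which gives the bounded defect $\overline{\fm_x^j}\subset\fm_x^{j-c}$. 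You correctly isolate this last finiteness input as the crux; without it one only compares $\widehat{\ord}_x$ with the filtration by integral closures. The only cosmetic point is that the $E_i$ live on $X^+$, not on $X$ as the paper's setup misstates.
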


Building upon results in \cite[Section 4.1]{BFJ14}, we show a comparison between $\ord_x$ and $\widehat{\ord}_x$. 

\begin{prop}\label{prop:ordcomp}
If there exists a $\bQ$-divisor $D$ such that $(X,D)$ is klt pair, then 
\[
\ord_x(g) \leq \widehat{\ord}_x(g)   \leq (n+1) \ord_x(g) 
\]
for all $g\in \cO_{x,X}$ and $n= \dim(X)$. 
\end{prop}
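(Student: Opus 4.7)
The lower bound $\ord_x(g) \le \widehat{\ord}_x(g)$ is immediate from the definition: the $n=1$ term of the supremum $\sup_{n\ge 1} n^{-1}\ord_x(g^n)$ is exactly $\ord_x(g)$.

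For the upper bound, my plan is to rephrase the claim as an ideal-theoretic containment and then exploit the klt hypothesis to control it. By Proposition \ref{prop:ordhat}(2), the inequality $\widehat{\ord}_x(g) \le (n+1)\ord_x(g)$ holding for every $g \in \cO_{x,X}$ is equivalent to the family of Brian\c{c}on-Skoda-type containments
\[
\overline{\fm_x^{m}} \;\subseteq\; \fm_x^{\lceil m/(n+1)\rceil} \qquad (m\ge 1).
\]
To establish these I would follow the strategy of \cite[\S 4.1]{BFJ14}, which for an arbitrary analytically irreducible normal local ring produces a comparison of the form $\widehat{\ord}_x \le C\,\ord_x$ with an \emph{implicit} constant $C$, and then sharpen that constant to $n+1$ via the klt hypothesis on $(X,D)$.

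Concretely, I would combine Proposition \ref{prop:ordhat}(1), which expresses $\widehat{\ord}_x(g) = \min_i \ord_{E_i}(g)/a_i$ in terms of the Rees valuations of the normalized blowup $\nu\colon X^+ \to X$ of $\fm_x$, with the klt condition. The klt condition ensures $A_{X,D}(\ord_{E_i}) > 0$ for every $i$, and together with the standard bound $\lct_x(\fm_x) \le n$ valid for any normal $n$-dimensional pair, should produce a Rees index $i_0$ satisfying $A_{X,D}(\ord_{E_{i_0}}) \le (n+1)\,a_{i_0}$. Feeding this into the tautological Skoda-type inequality $v(g) \le A_{X,D}(v)\,\lct_x((g))^{-1}$ applied to $v=\ord_{E_{i_0}}$, and bounding $\lct_x((g))^{-1}$ from above via the inclusion $(g) \subseteq \fm_x^{\ord_x(g)}$, should yield $\ord_{E_{i_0}}(g) \le (n+1)\,a_{i_0}\,\ord_x(g)$. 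Passing to the minimum over $i$ via Proposition \ref{prop:ordhat}(1) then gives $\widehat{\ord}_x(g) \le (n+1)\ord_x(g)$.

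The main obstacle I expect is extracting the sharp log-discrepancy bound $A_{X,D}(\ord_{E_{i_0}}) \le (n+1)\,a_{i_0}$ for a suitable Rees valuation of $\fm_x$: this is precisely where the klt hypothesis becomes essential and where the clean constant $n+1$ (as opposed to the implicit constant of \cite{BFJ14}) is forced. Pinning this down cleanly will likely require either a direct adjunction-based log-discrepancy computation on the normalized blowup $X^+$ or the multiplier-ideal estimates developed in \cite[Appendix II]{Li15a}; once it is in hand, the rest of the argument is a straightforward combination of Proposition \ref{prop:ordhat} with the definition of $\lct$.
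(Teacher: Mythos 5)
The lower bound and the reformulation of the upper bound via Proposition \ref{prop:ordhat}(2) as a containment $\overline{\fm_x^{m}}\subseteq\fm_x^{\lceil m/(n+1)\rceil}$ are fine, but your route to that containment breaks at the step where you bound $\lct_x((g))^{-1}$ from above by $\ord_x(g)$. First, the inclusion $(g)\subseteq\fm_x^{\ord_x(g)}$ only yields $\lct((g))\leq\lct(\fm_x)/\ord_x(g)$, i.e.\ a \emph{lower} bound on $\lct((g))^{-1}$, not an upper one. Second, the inequality you actually need, $\lct((g))^{-1}\leq\ord_x(g)$, is false on singular klt varieties: take $X$ the cone over the rational normal curve of degree $m$ (i.e.\ $\bA^2/\mu_m$ with the Veronese embedding) and $g$ the coordinate corresponding to $s^m$. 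Then $\ord_x(g)=1$, while $\mathrm{div}(g)=mL$ for a ruling $L$, so $\lct((g))=1/m$ and $\lct((g))^{-1}=m$ is unbounded. The ratio $\lct((g))^{-1}/\ord_x(g)$ is precisely an Izumi-type constant of the singularity and is not controlled by $n+1$; so the tautological inequality $v(g)\leq A_{X,D}(v)\lct((g))^{-1}$ cannot deliver the clean constant. (A secondary issue: the divisor computing $\lct(\fm_x)\leq n$ need not be a Rees valuation of $\fm_x$, so extracting the index $i_0$ also requires an argument.)

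The paper's proof avoids valuations entirely and is a two-line multiplier-ideal computation: if $m:=\ord_x(g)>0$, then the klt hypothesis gives $\overline{\fm_x^{m+n}}\subseteq\cJ((X,D),\fm_x^{m+n})$, Skoda's theorem gives $\cJ((X,D),\fm_x^{m+n})=\fm_x^{m+1}\cdot\cJ((X,D),\fm_x^{n-1})\subseteq\fm_x^{m+1}$, and since $g\notin\fm_x^{m+1}$ one concludes $g\notin\overline{\fm_x^{m+n}}$, whence $\widehat{\ord}_x(g)<m+n\leq(n+1)m$ by Proposition \ref{prop:ordhat}(2). Note this is strictly stronger than the stated bound (additive error $n$ rather than multiplicative factor $n+1$). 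If you want to salvage your approach, the klt input must enter as it does here — through $\overline{\fa}\subseteq\cJ((X,D),\fa)$ plus Skoda — rather than through a pointwise comparison of $\lct((g))$ with $\ord_x(g)$.
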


\begin{proof}
The first inequality follows from the definition of $\widehat{\ord}_x(g)$ as a supremum. 
For the second inequality, assume $m := \ord_x(g) >0$ and note that $g\notin \fm_x^{m+1}$. Since
\[  
\overline{ \fm_x^{m+n}} \subset \cJ( (X,D), \fm_x^{m+n} ) 
 = \fm_x^{m+1}  \cdot \cJ( (X,D), \fm_x^{n-1}) \subset {\fm_x}^{m+1},
\]
where the first inclusion follows from the fact that $(X,D)$ is klt and the second from Skoda's Theorem  \cite[9.6.39]{LazPAG}, we see $g\notin \overline{\fm_x^{m+n}}$. Therefore, $\widehat{\ord}_x(g)< m+n$, and the claim is complete.
\end{proof}

\subsection{Izumi type estimates}

The propositions in the section concern the following setup, which will arise in the proof of Theorem \ref{thm:izumifamily}. 
 Let $x\in (X,D)$ be an affine klt singularity over an algebraically closed field $\bk$. Fix a projective compactification $X\subset \overline{X}$ and a resolution of singularities $\overline{\pi}: \overline{Y} \to \overline{X}$. Assume there exists a very ample line bundle $L$ on $\overline{Y}$ and the restriction of $\overline{\pi}$ to $X$, denoted $\pi:Y\to X$, is a log resolution of $(X,D, \fm_x)$.

\begin{prop}\label{prop:classicalizumi}
There exists a constant $C_0$ so that the following holds: For any closed point $y \in \pi^{-1}(x)$ and $g\in \cO_{x,X}$, we have
\[
\ord_{y}(\pi^* g) \leq C_0 \cdot  \ord_x(g)
.\]
 Furthermore, if we write $\fm_x 
\cdot \cO_Y= \cO_Y(-\sum_{i=1}^r a_i E_i)$ where each $E_i$ is a prime divisor on $Y$ and $a_i \in \bZ_{>0}$, then there is a formula
for such a constant $C_0$ given in terms of the coefficients of 
$\sum a_i E_i$, the intersection numbers $(E_i \cdot E_j \cdot L^{n-2})$ for $1\leq i,j \leq r$, and the dimension of $X$. \end{prop}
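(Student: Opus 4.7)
The plan is to combine the formula $\widehat{\ord}_x(g) = \min_i \ord_{E_i}(g)/a_i$ from Proposition \ref{prop:ordhat}(1) with the Izumi-type bound $\widehat{\ord}_x(g) \leq (n+1)\ord_x(g)$ from Proposition \ref{prop:ordcomp}, and then to compare $\ord_y(\pi^* g)$ to the $\ord_{E_i}(g)$ by a classical intersection-theoretic argument on the projective compactification $\overline{Y}$ with the very ample class $L$.

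Concretely, the first step produces some index $i_0$ with $\ord_{E_{i_0}}(g) \leq (n+1) a_{i_0}\ord_x(g)$, so it would suffice to find a constant $C$, depending only on the $a_i$'s and the triple intersection numbers $(E_i \cdot E_j \cdot L^{n-2})$, such that
\[
\ord_y(\pi^* g) \leq C \cdot \max_i \ord_{E_i}(g)
\]
for every $y \in \pi^{-1}(x)$ and every $g \in \cO_{x,X}$. To produce such a $C$, I would follow the strategy of \cite[Appendix II]{Li15a} and \cite[Theorem 4.3]{BFJ14}: since $\pi^* g$ is regular on $Y$ and rational on $\overline{Y}$, the divisor $\mathrm{div}(\pi^* g)$ on $\overline{Y}$ is principal, and pairing with $E_j \cdot L^{n-2}$ for each $j$ yields a system of linear relations among the coefficients $\ord_{E_i}(g)$ and the orders of $\pi^* g$ along non-exceptional prime divisors, with coefficient matrix $\bigl((E_i \cdot E_j \cdot L^{n-2})\bigr)_{i,j}$. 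Inverting this system compares any $\ord_{E_i}(g)$ to any other up to a boundary contribution, which is absorbed via the containment $\pi^* g \in \cO_Y(-\ord_x(g) E)$ coming from $g \in \fm_x^{\ord_x(g)}$.

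The conclusion then follows from a local computation at the smooth point $y$: since $\pi$ is a log resolution of $(X,D,\fm_x)$ the $E_i$'s passing through $y$ form an SNC divisor, so in local coordinates one has the exact factorization
\[
\ord_y(\pi^* g) = \sum_{E_i \ni y} \ord_{E_i}(g) + \sum_{F \ni y} \ord_F(\pi^* g) \cdot \mathrm{mult}_y(F),
\]
where $F$ ranges over strict transforms through $y$ of prime divisors of $X$ appearing in $\mathrm{div}(g)$. The first sum is bounded by the comparison step above, while each $\ord_F(\pi^* g) = \ord_{\pi(F)}(g) \leq \ord_x(g)$ since $\pi(F)\ni x$, and the local multiplicities $\mathrm{mult}_y(F)$ are controlled by a Bezout-type bound on $\overline{Y}$ using intersection with $L^{n-1}$. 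The main obstacle is precisely the intersection-theoretic bookkeeping in the Izumi comparison: handling the non-exceptional and boundary terms uniformly in $y$, and extracting an explicit formula for the final constant $C_0$ purely in terms of the $a_i$, the intersection numbers $(E_i \cdot E_j \cdot L^{n-2})$, and $\dim X$.
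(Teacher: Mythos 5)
Your first half---combining Proposition \ref{prop:ordhat}(1) with Proposition \ref{prop:ordcomp}, and running a \cite{BFJ14}-style intersection-theoretic argument with the classes $E_j\cdot L^{n-2}$ to compare the coefficients $\ord_{E_i}(g)$ to one another---is exactly the paper's strategy. (Minor slip: bounding $\ord_y(\pi^*g)$ by $C\max_i\ord_{E_i}(g)$ does not suffice when you only control $\ord_{E_{i_0}}(g)$ for the minimizing index $i_0$; you need the comparison against that index, which your chain argument does supply, so this is repairable.) The genuine gap is in the last step, where you relate $\ord_y(\pi^*g)$ to the exceptional coefficients. Your factorization of $\ord_y(\pi^*g)$ is a correct identity on the smooth $Y$, but the proposed bound on the non-exceptional term fails on two counts. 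First, the claim $\ord_F(\pi^*g)=\ord_{\pi(F)}(g)\leq\ord_x(g)$ is false when $X$ is singular at $x$: if $\pi(F)$ is Weil but not Cartier at $x$ the order along the divisor can exceed the order at the point (e.g.\ on the cone $\{xy=z^2\}$ with $D'=\{x=z=0\}$ and $g=x$ one has $\ord_{D'}(g)=2>1=\ord_0(g)$). Second, and more seriously, the components $F$ depend on $g$, so the Bezout bound $\mathrm{mult}_y(F)\leq (F\cdot L^{n-1})$ is not uniform: as $g$ varies, $\mathrm{div}(g)$ acquires components of arbitrarily large degree, and no constant bounds $\mathrm{mult}_y(F)$ independently of $g$. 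What is actually needed is $\sum_F\ord_F(\pi^*g)\,\mathrm{mult}_y(F)\leq C\,\ord_x(g)$, which is itself an Izumi-type statement and does not follow from degree bounds.

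The paper sidesteps this entirely with a cleaner device: blow up $\overline{Y}$ at $y$, so that $b_0:=\ord_y(\pi^*g)$ becomes the coefficient of the new exceptional divisor $F_0$ in $\mathrm{div}(\mu^*g)$, and run the chain argument on $B_y\overline{Y}$ with the ample class $M=\rho^*L-\tfrac{1}{2}F_0$ and with $F_0$ included among the divisors being compared. The non-exceptional part $\widetilde{G}$ then enters only through $\widetilde{G}\cdot F_i\cdot M^{n-2}\geq 0$, which has the favorable sign and is simply discarded; connectedness of $F_0\cup F_1\cup\cdots\cup F_r$ (Zariski's Main Theorem) propagates the comparison from the $F_i$ down to $F_0$. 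To salvage your route you would have to replace the Bezout step either by this sign argument or by the fact that multiplicities of strict transforms do not increase under blowup, but the blowup-at-$y$ trick is the intended fix and also yields the explicit formula for $C_0$ required by the second assertion.
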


The proposition is a refined version of  \cite[Theorem 3.2]{Li15a}. Its proof  relies on ideas in \cite{BFJ14} and \cite[Appendix II]{Li15a}. 

\begin{proof}

Fix a closed point $y\in \pi^{-1}(x)$ and an element $g\in \cO_{x,X}$. Let $\rho: B_{y} \overline{Y} \to \overline{Y}$ denote the blowup of $\overline{Y}$ at $y$ with exceptional divisor $F_0$. We write $\mu := \pi \circ \rho$ and $F_i$ for the strict transform of $E_i$. 
Consider the divisor $G$ given by the closure of  $\{ \mu^* g=0 \}$ and  write 
\[
G= \sum_{i=0}^{r} b_i F_i + \widetilde{G},\] 
where no $F_i$ lies in the support of $\tilde{G}$. 
Note that $b_0 = \ord_{y}( \pi^*g)$ and 
$b_i: = \ord_{E_i}(g)$ for $1\leq i \leq r$. Since $\overline{Y} \to X$ factors through the normalized blowup of $X$ along $\fm_x$, Proposition \ref{prop:ordhat} implies \[
\min_{1 \leq i \leq r}  \frac{b_i}{a_i} =\min_{1 \leq i \leq r} \frac{ \ord_{E_i}(g)}{a_i}
 \leq \widehat{\ord}_x(g).\] To simplify notation, we set $a := \max\{a_i\}$. 

Our goal will be to find a constant $C$ such that  $b_0 \leq C \cdot b_i$ for all $i \in \{1,\ldots, r\}$. After finding such a $C$, we will have that  
 \[
  \ord_{y}( \pi^*g)  =b_0 \leq  (C/a) \widehat{\ord}_x(g) \leq (C/a) (n+1)\ord_x (g) , \]
  where the last inequality follows from Proposition \ref{prop:ordcomp}. Thus, the desired inequality
 will hold with $C_0 =  (C/a)(n+1)$.

We now proceed to find such a constant $C$. 
 Set $M = \rho^*L - (1/2) F_0$ and note that $M$ is ample \cite[Example 5.1.6]{LazPAG}.  
For each $i \in \{1,\ldots,r\}$, we consider
\[
\sum_{j=0}^r b_j (F_i \cdot F_j \cdot M^{n-2} ) = G \cdot F_i \cdot M^{n-2} - \widetilde{G}\cdot F_i \cdot M^{n-2}
\leq 
G \cdot F_i \cdot M^{n-2} = 0, \]
where the last equality follows from the fact that $G$ is a principal divisor in a neighborhood of $\pi^{-1}(x)$.  
Now, we set \[
c_{ij} := (F_i \cdot F_j \cdot M^{n-2})
. \]
and see 
\[
\sum_{j\neq i } b_j c_{ij} \leq -b_i c_{ii} \leq b_i |c_{ii}|.\]
Note that if $i \neq j$, then $c_{ij} \neq 0$ if and only if $F_i \cap F_j \neq \emptyset$. 
When that is the case, 
\begin{equation}\label{eq:classizumi1}
b_j \leq \frac{|c_{ii} | }{c_{ij}} b_{i}.
\end{equation}
Computing the $c_{ij}$ in terms of intersection numbers on $\overline{Y}$, we find that for $i,j \in \{1,\ldots, r\}$
\[
c_{ij} = \begin{cases}
(E_i \cdot E_j \cdot L^{n-2}) -(1/2)^{n-2} & \text{ if } y  \in E_i \cap E_j \\
(E_i \cdot E_j \cdot L^{n-2})  & \text{ otherwise }
\end{cases} 
.\]
Additionally,
 \[
 c_{0i} = \begin{cases} 
  (1/2)^{n-2} & \text{ if  $y \in E_i$}\\
   0 &\text{ otherwise}
  \end{cases}. 
  \]
  Now, for each $i,j\in \{1,\ldots, r\}$ such that $i\neq j$ and $E_i\cap E_j \neq \emptyset$, we set
  \[
  C_{ij} = \frac{  |( E_i \cdot E_i \cdot L^{n-2})| + (1/2)^{n-2} }
  {(E_i \cdot E_j \cdot L^{n-2} ) - (1/2)^{n-2}}.\] Note that $|c_{ii}|/c_{ij} \leq C_{ij}$. 
  For each $i$, we set
  \[
  C_{i0} =  \frac{  |( E_i \cdot E_i \cdot L^{n-2})| + (1/2)^{n-2} }{(1/2)^{n-2}}
  .\] Similarly, note that $\frac{|c_{ii}|}{c_{0i}} \leq C_{i0}$
 if $y \in E_i$.
 
 Now, set
   $C' = \max \{ 1, C_{ij}, C_{i0}  \}$.
   By our choice of $C'$, if $i,j\in \{0,1,\ldots, r\}$ are distinct and $F_i
   \cap F_j \neq \emptyset$, then  $b_j \leq C' \cdot b_i$. Now, Zariski's Main Theorem implies 
   $\cup F_i$ is connected. Therefore, we set $C = 1+ C'+C'^2+ \cdots C'^r$ and conclude $b_0 \leq C \cdot b_i$ for all $i \in \{1,\ldots,r \}$.
 \end{proof}

\begin{prop}\label{prop:partofizumi}
  There exists a constant $C_1$ such that the following holds:  If $v\in \Val_{x,X}$ satisfies $A_{X,D}(v) < +\infty$ and $y\in c_Y(v)$, then 
\[
v(g) \leq C_1 \cdot A_{X,D}(v) \ord_{y} (\pi^* g)
\]
for all $g\in \cO_{x,X}$. Furthermore, if $K_Y - \pi^*(K_X+D)$ has coefficients $>-1+\epsilon$ with $0 < \epsilon<1$, then the condition holds when $C_1:=1/\epsilon$.
\end{prop}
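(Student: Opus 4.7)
The plan is to pull back the inequality to the smooth model $Y$ via the birational transformation rule for log discrepancy, then bound the resulting Izumi-type inequality on $Y$ using log canonical thresholds. Setting $B := \pi^*(K_X+D) - K_Y$, a $\bQ$-divisor on $Y$ supported on an SNC divisor, we have $A_{X,D}(v) = A_{Y,B}(v)$ for every valuation $v$ of $K(X)=K(Y)$; the hypothesis translates to the coefficients of $B$ being strictly less than $1-\epsilon$. Writing $h := \pi^*g \in \cO_{y,Y}$ and viewing $v$ as a valuation of $K(Y)$ with $y \in c_Y(v)$, the desired inequality becomes the local Izumi-type bound $v(h) \leq \tfrac{1}{\epsilon}\, A_{Y,B}(v)\, \ord_y(h)$.

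Next, I would invoke the valuation-theoretic description $\lct_y(Y,B;\fa) = \inf_w A_{Y,B}(w)/w(\fa)$, which gives $v(h)\cdot \lct_y(Y,B;(h)) \leq A_{Y,B}(v)$; thus it suffices to prove
\[
\lct_y(Y,B;(h)) \geq \frac{\epsilon}{\ord_y(h)}.
\]
This bound splits into two parts. The first is the classical Key Lemma on smooth varieties,
\[
\lct_y(Y;(h)) \geq \frac{1}{\ord_y(h)},
\]
saying that on a smooth ambient the pair $(Y, \tfrac{1}{m}(h))$ is log canonical at $y$ when $\ord_y(h)=m$; this is well-known (see \cite{LazPAG}) and admits a short proof by iterated blowups using that after the first blowup of $y$ the strict transform of $(h)$ has multiplicity at most $m$ along every point of the exceptional divisor. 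The second part is the comparison $\lct_{Y,B}((h)) \geq \epsilon \cdot \lct_Y((h))$: for any divisorial valuation $\ord_E$ with center containing $y$, choose local coordinates $z_1,\dots,z_n$ at $y$ that include the local equations of the SNC components of $B$; the smooth weight bound $A_Y(\ord_E) \geq \sum_i \ord_E(z_i)$ combined with the coefficient bound $b_i < 1-\epsilon$ gives $\ord_E(B) \leq (1-\epsilon) A_Y(\ord_E)$, hence $A_{Y,B}(\ord_E) \geq \epsilon\, A_Y(\ord_E)$. Taking the infimum over $E$ yields the comparison, and combining with the Key Lemma completes the proof.

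The main obstacle I expect is proving the Key Lemma cleanly; although classical, it requires a short but nontrivial iterated-blowup argument or a direct appeal to multiplier ideal theory on smooth varieties. A secondary technical point is that the inequality $v(\fa)\cdot \lct(\fa) \leq A(v)$ must be used for the (potentially non-divisorial) valuations $v$ appearing in the statement, which is handled by the general theory of log discrepancy for arbitrary valuations developed in \cite{JM12} and \cite{BdFFU15}.
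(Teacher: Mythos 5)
Your argument is correct and is essentially the one the paper outsources to the proof of \cite[Theorem 3.1]{Li15a}: pass to the log resolution, combine the smooth-point bound $\lct_y(Y;(h))\geq 1/\ord_y(h)$ with $v(h)\cdot\lct_y(Y,B;(h))\leq A_{Y,B}(v)$, and absorb the boundary $B=\pi^*(K_X+D)-K_Y$ via the coefficient bound $b_i<1-\epsilon$ together with log canonicity of the reduced SNC divisor (note the negative coefficients of $B$ cause no harm since $\ord_E(B_i)\geq 0$). The only delicate point---applying the valuation-theoretic characterization of $\lct$ to the possibly non-effective boundary $B$ and to non-divisorial $v$---is exactly the one you flag, and it is covered by \cite{JM12, BdFFU15}.
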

\begin{proof}
A proof of the statement can be found in the proof  \cite[Theorem 3.1]{Li15a} in the case when $D=0$. The more general statement follows from a similar argument. 
\end{proof}

\subsection{Proofs of Theorems \ref{thm:izumifamily} and \ref{thm:properness}}

\begin{proof}[Proof of Theorem \ref{thm:izumifamily}]

It is sufficient to prove the theorem in the case when both $\cX$ and $T$ are affine. 
We will show that there exists a nonempty open set $U\subset T$ and a constant $K_0 >0$ such that the conclusion of the theorem holds for all $t\in U$. By induction on the dimension of $T$, the proof will be complete.

Fix a (relative) projective compactification $\overline{\pi}: \overline{\cX} \to T$. Denote the ideal sheaf of $\sigma(T)$ in $\cX$ by $\cI_{\sigma(T)}$.
Fix  a  projective resolution of singularities $\overline{\rho} : \overline{\cY}  \to \overline{\cX}$ such that its restriction to $\cX$, denoted $\rho: \cY \to \cX$, is a log resolution of
$(\cX, \cD, \cI_{\sigma(T)})$. Set $\overline{\mu} = \overline{\pi} \circ \overline{\rho}$.  We write
\[
\cI_{\sigma(T)} \cdot \cO_{\cY} = \cO_{\cY} \left( - \sum_{i=1}^k b_i \cE_i \right) \text{ and } K_\cY - \rho^*(K_\cX + \cD)  = \sum_{i=1}^k a_i \cE_i 
\]
where each $\cE_i$ is a prime divisor on $\cY$. We order these prime divisors so that each $\cE_{i}$ dominates $T$ if and only if $1\leq i \leq r$ for some positive integer $r\leq k$.

 By generic smoothness, there exists
 a nonempty open set $U_1 \subset T$ such that $\cY_{\gt} \to \cX_{\gt}$ is a log resolution 
  of $(\cXgt,\cDgt, \fm_{\sgt})$ for all $t\in U_1$ and $\overline{\mu}^{-1}(U_1)\to U_1$ is smooth. Further shrinking $U_1$, we may assume $\cE_{i,\gt} \neq \emptyset$ if and only if $1 \leq i \leq r$.  Let us assume $i \leq r$ for the rest of the proof.
 Now, we have
 \[
 \fm_{\sgt} \cdot \cO_{\cY_{\sgt}} = \cO_\cY \left( -  \sum_{i=1}^r b_i \cE_{i,\gt} \right) 
  \text{ and } K_{\cY_{\gt}} - \rho_{\gt}^*(K_{\cXgt} +\cDgt )  = \sum_{i=1}^r a_i \cE_{i,\gt} \]
  for each $t\in U_1$. 
  Note that the divisors $\cE_{i}\vert_{\gt}$  may have multiple irreducible components. 
  
  Next, we apply \cite[Tag 0551]{SPA} to find and an \'etale morphism $T'\to U_1$, with $T'$ irreducible and so that all irreducible components of the generic fiber of ${\cE'}= \cE_{i} \times_{T} T' \to T'$ are geometrically irreducible.  Denote by $(\cX',\cD',\cY', \cE_i'):=(\cX,\cD,\cY, \cE_i)\times_T T'$, and $\eta'$ the generic points of $T'$. 
 Write
 \[
{\cE'}_{i,\eta'}={\cE'}_{i,1,\eta'} \cup \cdots \cup {\cE'}_{i,m_i,\eta'}
 \]
 for the decomposition of $\cE_{i,\eta'}'$ into irreducible components, and set $\cE_{i,j}$ equal to the closure of $\cE_{i,j,\eta'}'$ in $\cY'$. 
Applying  \cite[Tag 0559]{SPA}, we may find an open subset $U' \subset T'$ so that each divisor ${\cE}_{i,j,\gt}$ is geometrically irreducible for all $t\in U'$. Further shrinking $U'$, we may assume that the divisors $\cE_{i,j,\gt}$ for $1\leq i \leq r$ and $1\leq j \leq m_i$ are distinct. We choose $U\subset T$ to be a nonempty open subset contained in the image of $U'$ in $T$. 
 
 We seek to find a constant $C_0$ such that if
 $t\in U$, then 
 \begin{equation}\label{eq:izumiproof1}
 \ord_{\sgt}(g) \leq C_0 \cdot \ord_{y}(\rho_{\gt}^{*}(g)) 
 \end{equation}
 for all $g\in \cO_{\sgt,\cXgt }$ and $y \in \rho_{\gt}^{-1}(\sgt)$.
 Since $(\cX',\cD',\cY')|_{\overline{t'}}
 \cong (\cX,\cD,\cY)|_{\gt}$ if $t$ is the image of $t'$, it suffices to establish an inequality 
 of the form \eqref{eq:izumiproof1}
on the singularities $\sigma(\overline{t'})\in (\cX'_{\overline{t'}},\cD'_{\overline{t'}})$ for $t' \in U'$. 
 Let $\cL$ be a line bundle on $\overline{\cY}$ such that $\cL_{\gt}$
 is very ample for all $t\in T$, and write $\cL'$ for the pullback of $\cL$ to $\overline{\cY'}$.
 Now, fix $1\leq i_1,i_2\leq r$ such that $b_{i_1},b_{i_2}>0$. For fixed $1\leq j_1\leq m_{i_1}$
 and $1\leq j_2\leq m_{i_2}$, the function that sends $U' \ni t'$ 
 to  $ (\cE_{i_1,j_1, \overline{t'}} \cdot {\cE_{i_2,j_2,\overline{t'} }
 \cdot \cL'_{\overline{t'}}}^{n-2})$ is constant 
 \cite[Lemma VI.2.9]{Kol96}. From our choice of $U'$, we know
 that $\cE_{i,j,\overline{t'}}$ is irreducible for any $t' \in U'$. 
 Therefore, we may apply Proposition \ref{prop:classicalizumi} 
 to find such a constant $C_0$ such that the desired inequality holds for all $t'\in U'$.
  
Next, choose $0<\epsilon<1$ so that  $a_i<1-\epsilon$ for all $1\leq i \leq r$. Set $C_1:= 1/\epsilon$. By 
Proposition \ref{prop:partofizumi}, if $t\in U$ and $v\in \Val_{\sgt,\cXgt}$, then 
 \begin{equation}\label{eq:izumiproof2}
v(g) \leq C_1 \cdot  A_{\cXgt,\cDgt} \ord_{y}( \rho_{\gt}^*g) 
\end{equation}
for all $g\in \cO_{\sgt,\cXgt}$ and $y \in c_{\cY_{\gt}}(v)$. Combining \eqref{eq:izumiproof1} and \eqref{eq:izumiproof2}, we see that the desired inequality holds when $K_0 = C_0 \cdot C_1$. 
\end{proof}

\begin{proof}[Proof of Theorem \ref{thm:properness}]
The theorem follows immediately Theorem \ref{thm:izumifamily} and \cite[Theorem 4.1]{Li15a}, as in the proof of \cite[Theorem 4.3]{Li15a}. 
\end{proof}

\section{Proofs and applications}

\subsection{A convergence result for normalized colengths}\label{sec:hvolcol}

\begin{thm}\label{thm:convncol}
Let $\pi: (\cX,\cD) \to T$ together with a section $\sigma : T \to \cX$ be a $\bQ$-Gorenstein flat family of klt singularities. For every $\epsilon >0$, there exists a constant $c_1>0$ and integer $N$ so  that the following holds: if $t\in T$, then 
\[
\widehat{ \ell_{c,m}}(\sgt,\cXgt,\cDgt)  \leq \nvol(\sgt,\cXgt,\cDgt) +\epsilon \]
for all $m$ divisible by $N$ and $0< c\leq  c_1$. 
\end{thm}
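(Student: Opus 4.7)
\medskip

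\noindent\textbf{Proof strategy for Theorem \ref{thm:convncol}.}
The plan is to realize $\widehat{\ell_{c,m}}$ by choosing, on each fiber, an appropriate near-minimizing valuation and then taking its valuation ideal $\fa_m(v)$; the bounds from Sections \ref{sec:unifapp} and \ref{sec:izumi} will show that this choice works uniformly in $t$.

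First I would fix a universal a-priori bound on the normalized volume: by Theorem \ref{maxhvol} one has $\hvol(\sgt,\cXgt,\cDgt)\leq n^n$ for every $t\in T$, where $n=\dim \cX - \dim T$. Given $\epsilon>0$, for each $t\in T$ choose $v_t\in\Val_{\sgt,\cXgt}$, rescaled so that $v_t(\fm_{\sgt})=1$, with $\hvol(v_t)\leq \hvol(\sgt,\cXgt,\cDgt)+\epsilon/2$. In particular $\hvol(v_t)\leq n^n+\epsilon/2$. Applying Theorem \ref{thm:properness} with $v(\fm_{\sgt})=1$ yields a uniform bound
\[
A_{\cXgt,\cDgt}(v_t)\leq \frac{\hvol(v_t)}{K_1}\leq A:=\frac{n^n+\epsilon/2}{K_1}.
\]

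Next I would feed $A$ into Theorem \ref{thm:volcol} with parameter $\epsilon':=\epsilon/(2A^n)$ to obtain an integer $N_1$ such that, for every $t\in T$, every valuation $v\in\Val_{\sgt,\cXgt}$ with $v(\fm_{\sgt})=1$ and $A_{\cXgt,\cDgt}(v)\leq A$, and every $m$ divisible by $N_1$, the valuation ideal $\fa_m(v)$ satisfies
\[
\frac{\ell(\cO_{\sgt,\cXgt}/\fa_m(v))}{m^n/n!}\leq \vol(v)+\epsilon'.
\]
Combining this with the trivial bound $\lct(\fa_m(v))\leq A_{\cXgt,\cDgt}(v)/v(\fa_m(v))\leq A_{\cXgt,\cDgt}(v)/m$ gives
\[
n!\cdot \lct(\fa_m(v))^n\cdot \ell(\cO_{\sgt,\cXgt}/\fa_m(v)) \leq A_{\cXgt,\cDgt}(v)^n\vol(v)+\epsilon' A^n \leq \hvol(v_t)+\epsilon/2 \leq \hvol(\sgt,\cXgt,\cDgt)+\epsilon.
\]

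It remains to check that $\fa_m(v_t)$ is admissible in the definition of $\widehat{\ell_{c,m}}$, i.e.\ that $\fm_{\sgt}^m\subset \fa_m(v_t)\subset \fm_{\sgt}$ and that $\ell(\cO_{\sgt,\cXgt}/\fa_m(v_t))\geq c m^n$ for some uniform $c>0$. The two inclusions are immediate from $v_t(\fm_{\sgt})=1$ and the fact that $v_t$ is centered at $\sgt$. For the colength lower bound I would invoke the Izumi-type estimate (Theorem \ref{thm:izumifamily}): every $f\in\fa_m(v_t)$ satisfies
\[
m\leq v_t(f)\leq K_0\, A_{\cXgt,\cDgt}(v_t)\,\ord_{\sgt}(f)\leq K_0 A\,\ord_{\sgt}(f),
\]
so $\fa_m(v_t)\subset \fm_{\sgt}^{j}$ with $j=\lfloor m/(K_0 A)\rfloor$. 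Hence
\[
\ell(\cO_{\sgt,\cXgt}/\fa_m(v_t)) \ \geq\ \ell(\cO_{\sgt,\cXgt}/\fm_{\sgt}^{j}).
\]
By Proposition \ref{prop:hs} (from Appendix \ref{app:HS}) the set of Hilbert--Samuel functions $\{j\mapsto \ell(\cO_{\sgt,\cXgt}/\fm_{\sgt}^j) : t\in T\}$ is finite, so each such polynomial has leading coefficient $\geq 1/n!$ (the multiplicity of $\fm_{\sgt}$ is a positive integer). A uniform lower-order estimate then yields $\ell(\cO_{\sgt,\cXgt}/\fm_{\sgt}^{j})\geq c_2 j^n$ for $j$ beyond some $j_0$ independent of $t$. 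Taking $c_1:=c_2/(2 K_0 A)^n$ and $N$ a common multiple of $N_1$ and $K_0A\cdot j_0$ produces the required constants, with the combined chain of inequalities giving the desired bound for all $m$ divisible by $N$ and all $0<c\leq c_1$.

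The main obstacle is the simultaneous control of all the uniform constants across the family: the bound on $A$ relies on Theorem \ref{maxhvol} and Theorem \ref{thm:properness}; the volume approximation step requires Theorem \ref{thm:volcol}, which itself needs an input bound on $A$; and the colength lower bound requires both the Izumi-type estimate of Theorem \ref{thm:izumifamily} and the finiteness of Hilbert--Samuel functions in families from Appendix \ref{app:HS}. Once these ingredients are aligned with consistent choices of $\epsilon'$ and $A$, the estimate assembles in the straightforward manner sketched above.
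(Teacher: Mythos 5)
Your proposal follows essentially the same route as the paper: restrict attention to valuations with $v(\fm_{\sgt})=1$ and uniformly bounded log discrepancy via the properness estimate, feed that bound into Theorem \ref{thm:volcol}, use $m\cdot\lct(\fa_m(v))\leq A_{\cXgt,\cDgt}(v)$, and check that the valuation ideals $\fa_m(v)$ are admissible competitors in the definition of $\widehat{\ell_{c,m}}$. Two points deserve comment. First, your a-priori bound $\hvol(\sgt,\cXgt,\cDgt)\leq n^n$ is obtained by citing Theorem \ref{maxhvol}, which in this paper is \emph{deduced from} Theorem \ref{mainthm} and hence ultimately from the very statement you are proving; as written this is circular. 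The fix is cheap: either cite the external source \cite[Theorem A.4]{LiuX17} for the bound, or do what the paper does and bound $\hvol(\sgt,\cXgt,\cDgt)\leq\lct(\fm_{\sgt})^n\hs(\fm_{\sgt})$, which takes finitely many values over $T$ by Propositions \ref{lctsemicont} and \ref{prop:hs}. Second, your verification of the colength constraint $\ell(\cO_{\sgt,\cXgt}/\fa_m(v))\geq c\,m^n$ via the Izumi-type estimate (giving $\fa_m(v)\subset\fm_{\sgt}^{\lfloor m/(K_0A)\rfloor}$ and hence a lower bound by $\ell(\cO_{\sgt,\cXgt}/\fm_{\sgt}^{j})$) is the correct argument and is actually more careful than the paper's Claim 1, which at this step invokes the inclusion $\fm_{\sgt}^m\subset\fa_m(v)$ — an inclusion that yields an upper, not a lower, bound on the colength. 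Aside from trivial bookkeeping (e.g.\ $K_0A\cdot j_0$ need not be an integer when choosing $N$), your argument assembles correctly.
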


Before beginning the proof of the previous theorem, we record the following statement.

\begin{prop}\label{prop:boundA}
Let $\pi: (\cX,\cD) \to T$ together with a section $\sigma : T \to \cX$ be a $\bQ$-Gorenstein flat family of klt singularities. There exists a constant $A$ so that\[
\nvol(\sgt,\cXgt,\cDgt) = \inf \left\{ \nvol(v) \,\vert\, v\in \Val_{\sgt,\cXgt} \text{ with } v(\fm_{\sgt}) =1 \text{ and } A_{\cXgt,\cDgt}(v) \leq A \right\}\]
for all $t\in T$. 
\end{prop}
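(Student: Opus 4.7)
The $\geq$ direction is immediate. Since $\nvol(\lambda v)=\nvol(v)$ for any $\lambda>0$, the infimum defining $\nvol(\sgt,\cXgt,\cDgt)$ is unchanged after restricting to valuations with $v(\fm_{\sgt})=1$; any further restriction to $A_{\cXgt,\cDgt}(v)\leq A$ can only increase the infimum. So the content lies in the $\leq$ direction, and the plan is to choose $A$ so that any valuation approximately minimizing $\nvol$ (after the normalization $v(\fm_{\sgt})=1$) automatically satisfies $A_{\cXgt,\cDgt}(v)\leq A$, with $A$ independent of $t$. The two ingredients I will combine are a uniform upper bound on $\nvol(\sgt,\cXgt,\cDgt)$ and the family properness estimate of Theorem \ref{thm:properness}.

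Step 1 (uniform upper bound). Let $n := \dim(\cX)-\dim(T)$ be the constant relative dimension. By \cite[Theorem A.4]{LiuX17}, one has $\nvol(\sgt,\cXgt,\cDgt) \leq n^n$ for every $t\in T$. So there is a bound $M:=n^n$ on the normalized volumes of all fibers of the family.

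Step 2 (properness estimate in families). By Theorem \ref{thm:properness}, there is a constant $K_1>0$, depending only on $\pi$, such that for every $t\in T$ and every $v\in \Val_{\sgt,\cXgt}$ with $v(\fm_{\sgt})=1$ and $A_{\cXgt,\cDgt}(v)<+\infty$,
\[
K_1\cdot A_{\cXgt,\cDgt}(v) \;\leq\; A_{\cXgt,\cDgt}(v)^n\cdot \vol(v) \;=\; \nvol(v).
\]
Set $A:=(M+1)/K_1=(n^n+1)/K_1$.

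Step 3 (conclusion). Fix $t\in T$ and choose a sequence $(v_i)\subset \Val_{\sgt,\cXgt}$ with $\nvol(v_i)\to \nvol(\sgt,\cXgt,\cDgt)$; by scale invariance of $\nvol$ we may assume $v_i(\fm_{\sgt})=1$. For $i$ sufficiently large, $\nvol(v_i)\leq M+1$, and then Step 2 gives $A_{\cXgt,\cDgt}(v_i)\leq A$. Hence the $v_i$ eventually lie in the restricted set appearing on the right-hand side of the proposition, so the infimum there is at most $\nvol(\sgt,\cXgt,\cDgt)$, as desired. There is no real obstacle beyond the already-established Theorem \ref{thm:properness} and the cited inequality $\nvol\leq n^n$; the entire argument is a direct combination of these two uniform statements.
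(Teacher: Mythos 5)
Your argument is correct and follows essentially the same two-step route as the paper: first a uniform upper bound on $\nvol(\sgt,\cXgt,\cDgt)$ over $t\in T$, then Theorem \ref{thm:properness} to convert an approximate minimizer with $v(\fm_{\sgt})=1$ into a bound on $A_{\cXgt,\cDgt}(v)$. The only difference is the source of the uniform bound: you invoke the external inequality $\nvol\leq n^n$ from \cite[Theorem A.4]{LiuX17}, whereas the paper uses the elementary bound $\nvol(\sgt,\cXgt,\cDgt)\leq \lct(\fm_{\st})^n\hs(\fm_{\st})$ together with the constructibility statements (Propositions \ref{lctsemicont} and \ref{prop:hs}) to see that the right-hand side takes only finitely many values on $T$. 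The paper's choice is preferable here for two minor reasons: the cited result is stated for complex singularities while the geometric fibers live over arbitrary algebraically closed fields of characteristic $0$ (fixable by the Lefschetz principle, but worth saying), and the paper later re-derives the $n^n$ bound as Theorem \ref{maxhvol} from the main theorem, so keeping this step independent of it is cleaner, even though there is no actual circularity since \cite{LiuX17} is proved independently.
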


\begin{proof}
We first note that there exists a real number $B$ so that $\nvol(\sgt,\cXgt,\cDgt) \leq B$ for all $t\in T$. Indeed, $\nvol(\sgt,\cXgt,\cDgt) \leq \lct( \fm_{\sgt})^n \hs(\fm_{\sgt}) = \lct( \fm_{\st})^n \hs(\fm_{\st})$ and the function that sends $ t\in T$ to $\lct(\fm_{\st})^n \hs(\fm_{\st})$ takes finitely many values by Propositions \ref{lctsemicont} and \ref{prop:hs}. Thus, 
\[
\nvol(\sgt,\cXgt,\cDgt) = \inf \left\{ \nvol(v) \,\vert\, v\in \Val_{ \sgt, \cXgt} \text{ with } v(\fm_{\sgt}) =1 \text{ and } \nvol(v)\leq  B \right\}\]
for all $t\in T$. 

Next, fix a constant  $K_1 \in \bR_{>0}$ satisfying the conclusion of Theorem \ref{thm:properness}. If $v\in \Val_{\sgt,\cXgt}$ satisfies $v(\fm_{\sgt})=1$  and $\nvol(v) \leq B$, then $A_{\cXgt,\cDgt}(v) \leq B/K_2$. Therefore, the proposition holds with $A:= B/K_2$.  
\end{proof}

\begin{proof}[Proof of Theorem \ref{thm:convncol}]

Fix $\varepsilon>0$ and a constant $A \in \bR_{>0}$ satisfying the conclusion of the previous proposition. To simplify notation, set 
\[
W_t = \{ v\in \Val_{\sgt, \cXgt} \, \vert \, v(\fm_{\sgt})=1 \text{ and } A_{(\cXgt,\cDgt)}(v) \leq A \}\]
for each $t\in T$. We proceed by proving the following two claims.\\

\noindent {\bf Claim 1}: There exist constants $c_1\in \bR_{>0}$ and $M_1 \in \bZ_{>0}$ such that the following holds:  if $t\in T$, then
\[
 \widehat{ \ell_{c,m}}(\sgt,\cXgt,\cDgt)  \leq \inf_{v\in W_t} n! \cdot \lct(\fa_m(v))^n \ell ( \cO_{\sgt,\cXgt}/ \fa_m(v)) 
\]
for all $0<c<c_1$, and  $m \geq M_1$. 

Proposition \ref{prop:hs} implies
there exist constants $c_1>0$ and $M_1 \in \bZ_{>0}$ such that 
 \[
\ell ( \cO_{\sgt,\cXgt} / \fm_{\sgt}^m ) \geq m^n \cdot  c_1
\]
for all $t\in T$ and $m \geq M_1$. Now, consider $v\in W_t$ for some $t\in T$. Since $v(\fm_{\sgt})=1$, $\fm_{\sgt}^m \subset \fa_m(v)$ for all $m \in \bZ_{>0}$. Therefore, $\ell ( \cO_{\sgt, \cXgt} / \fa_m(v) ) \geq  \ell ( \cO_{\sgt,\cXgt} / \fm_{\sgt}^m) ) $. The claim now follows from definition of $\widehat{ \ell_{c,m}}(\sgt,\cXgt,\cDgt)$.  \\

\noindent {\bf Claim 2}: There exists $M_2\in \bZ_{>0}$ such that the following holds: if $t\in T$ and $v\in W_t$, then 
\[
n! \cdot  \lct(  \fa_m(v) )^n \cdot \ell( \cO_{\sgt,\cXgt} / \fa_m(v)) \leq \nvol(v) + \varepsilon
\]
for all integers $m$ divisible by $M_2$. 

By Theorem \ref{thm:volcol}, there exists $M_2\in \bZ_{>0}$ such the following holds: If $t\in T$ and $v \in W_t$, then 
\begin{equation}\label{eq:colin}
\frac{ \ell ( \cO_{\sgt,\cXgt }/ \fa_{ m}(v))}{m^n/ n!} 
\leq 
\vol(v) + \varepsilon/A^n 
\end{equation}
for all integers $m$ divisible by $M_2$. Note that 
\[
m \cdot   \lct(  \fa_m(v) ) \leq \lct( \fa_\bullet(v)) \leq A_{ \cXgt,\cDgt}(v).\]
Therefore, multiplying   \eqref{eq:colin} by $(m \cdot   \lct(  \fa_m(v) ))^n$ yields the desired result. \\

We return to the proof of the corollary. Fix constants $M_1$, $M_2$, and $c_1$ satisfying the conclusions of the Claims 1 and 2. Set $M = M_1 \cdot M_2$. Now, if $t\in T$, $m$ is a postive integer divisible by $M$, and $c$ satisfies $0<c<c_1$, then
\begin{align*}
 \widehat{ \ell_{c,m}}(\sgt,\cXgt,\cDgt) & \leq \inf_{v\in W_t} n! \cdot \lct(\fa_m(v))^n \ell ( \cO_{\sgt,\cXgt}/ \fa_m(v)) \\
 		&  \leq  \inf_{v\in W_t} \nvol(v) + \varepsilon \\
		& =  \nvol(\sgt,\cXgt,\cDgt) +\varepsilon ,
\end{align*}
where the first (in)equality follows from Claim 1, the second from Claim 2, and the third from our choice of $A$. 
\end{proof}

\subsection{Proofs}\label{sec_proofs}
The following theorem is a stronger result that implies Theorem \ref{mainthm}.

\begin{thm}\label{weaksc}
Let $\pi:(\cX,\cD)\to T$ together with a section $\sigma: T\to \cX$ be a $\bQ$-Gorenstein flat family of klt singularities over a field $\bk$ of characteristic $0$.
Then the function $t\mapsto \hvol(\sgt,\cXgt,\cDgt)$ on $T$ is lower semicontinuous
with respect to the Zariski topology. 
\end{thm}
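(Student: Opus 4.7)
The plan is to combine the uniform approximation of the normalized volume by normalized colengths (Theorems \ref{hvolcolength} and \ref{thm:convncol}) with the lower semicontinuity of each individual normalized colength function $t \mapsto \widehat{\ell_{c,k}}(\sgt,\cXgt,\cDgt)$. The lower semicontinuity of $\widehat{\ell_{c,k}}$ itself is to be deduced from the properness of suitable relative Hilbert schemes together with Proposition \ref{lctsemicont}(2); an $\varepsilon/3$-style argument will then transfer lower semicontinuity from the normalized colengths to the normalized volume.

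For each fixed $c > 0$ and $k \in \bN$, I first claim that $t \mapsto \widehat{\ell_{c,k}}(\sgt,\cXgt,\cDgt)$ is lower semicontinuous on $T$. By Proposition \ref{fieldext}(1) this quantity equals $n!$ times the infimum of $d \cdot \lct(\cX_{\kappa(h)},\cD_{\kappa(h)};\fb_h)^n$ as $h = [\fb]$ ranges over scheme-theoretic points of $H_{k,d} := \mathrm{Hilb}_d(\cZ_k/T)$ with $d \geq c k^n$, where $\cZ_k := \Spec(\cO_\cX/\cI_{\sigma(T)}^k)$. Since $\cZ_k$ is finite over $T$ (being a thickening of the section $\sigma$), each $H_{k,d}$ is proper over $T$; moreover only finitely many values of $d$ contribute, as any nonempty $H_{k,d}$ has $d \leq \max_t \ell(\cO_{\sigma(\overline{t}),\cX_{\overline{t}}}/\fm_{\sigma(\overline{t})}^k)$, which is finite by the constructibility of the Hilbert--Samuel function. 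On $\cX \times_T H_{k,d}$ the cosupport of the universal ideal $\fb$ is contained in $\sigma(T) \times_T H_{k,d}$, hence is proper over $H_{k,d}$; so Proposition \ref{lctsemicont}(2), applied after passing to the normalization of each irreducible component of $H_{k,d}$, gives lower semicontinuity of $h \mapsto \lct(\cX_{\kappa(h)},\cD_{\kappa(h)};\fb_h)$, and multiplying by the locally constant factor $d$ and raising to the $n$-th power preserves this. Since the pushforward of a lower semicontinuous function under a proper morphism is lower semicontinuous (the image of a closed sublevel set is closed) and only finitely many $d$ intervene, taking the infimum yields the desired lower semicontinuity of $\widehat{\ell_{c,k}}$ on $T$.

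For the final assembly, fix a scheme-theoretic point $t_0 \in T$ and $\varepsilon > 0$, and set $V_0 := \hvol(\sigma(\overline{t_0}),\cX_{\overline{t_0}},\cD_{\overline{t_0}})$. Theorem \ref{thm:convncol} applied with $\varepsilon/3$ supplies constants $c_1 > 0$ and $N \in \bN$ such that
\[
\widehat{\ell_{c,m}}(\sgt,\cXgt,\cDgt) \leq \hvol(\sgt,\cXgt,\cDgt) + \varepsilon/3
\]
for every $t \in T$, every $0 < c \leq c_1$, and every $m$ divisible by $N$. Theorem \ref{hvolcolength} applied at $(\sigma(\overline{t_0}),\cX_{\overline{t_0}},\cD_{\overline{t_0}})$ then produces some $c \leq c_1$ with $\widehat{\ell_{c,\infty}}(\sigma(\overline{t_0}),\cX_{\overline{t_0}},\cD_{\overline{t_0}}) = V_0$, and by the definition of $\liminf$ one can choose $k$ divisible by $N$ large enough that $\widehat{\ell_{c,k}}(\sigma(\overline{t_0}),\cX_{\overline{t_0}},\cD_{\overline{t_0}}) \geq V_0 - \varepsilon/3$. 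By the first step the set $U := \{t\in T : \widehat{\ell_{c,k}}(\sgt,\cXgt,\cDgt) > V_0 - 2\varepsilon/3\}$ is Zariski open in $T$ and contains $t_0$, and for every $t \in U$ the uniform upper bound gives
\[
\hvol(\sgt,\cXgt,\cDgt) \geq \widehat{\ell_{c,k}}(\sgt,\cXgt,\cDgt) - \varepsilon/3 > V_0 - \varepsilon,
\]
proving lower semicontinuity at $t_0$.

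I expect the main obstacle to be in the first step: carefully matching the Hilbert-scheme description of $\widehat{\ell_{c,k}}$ on the \emph{geometric} fibers $\cXgt$ (Proposition \ref{fieldext}) with semicontinuity of the log canonical threshold on the relative Hilbert scheme $H_{k,d} \to T$, whose scheme-theoretic points carry arbitrary residue fields, and verifying that pushforward along the proper morphism $H_{k,d} \to T$ produces genuine Zariski lower semicontinuity on $T$ as a scheme (not merely constructibility, or semicontinuity on closed points only). Once this is in place, the $\varepsilon/3$-argument combining Theorems \ref{hvolcolength} and \ref{thm:convncol} is essentially routine.
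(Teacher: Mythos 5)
Your proposal is correct and follows essentially the same route as the paper: the same Hilbert-scheme argument (thickenings of the section, normalization, Proposition \ref{lctsemicont}, properness) to get Zariski lower semicontinuity of $t\mapsto\widehat{\ell_{c,k}}(\sgt,\cXgt,\cDgt)$, followed by the same combination of Theorems \ref{thm:convncol} and \ref{hvolcolength} to transfer this to $\hvol$. The only differences are cosmetic ($\varepsilon/3$ versus the paper's $\varepsilon/2$ bookkeeping).
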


\begin{proof}
Let $\cZ_k\to T$ be the $k$-th thickening of the section $\sigma(T)$,
i.e. $\cZ_{k}=\Spec_T(\cO_{\cX}/\cI_{\sigma(T)}^k)$. 
Let $d_k:=\max_{t\in T}\ell(\cO_{\sigma(t),\cX_t}/\fm_{\sigma(t),\cX_t}^k)$.
For any $d\in\bN$,
denote $\cH_{k,d}:=\mathrm{Hilb}_{d}(\cZ_k/T)$.
Since $\cZ_k$ is proper over $T$, we know that
$\cH_{k,d}$ is also proper over $T$. Let $\cH_{k,d}^{\mathrm{n}}$ be the normalization of $\cH_{k,d} $. Denote by $\tau_{k,d}:\cH_{k,d}\to T$. After pulling back the 
universal ideal sheaf on $\cX\times_T\cH_{k,d}$ over $\cH_{k,d}$ to $\cH_{k,d}^{\mathrm{n}}$,
we obtain an ideal sheaf $\fb_{k,d}$ on $\cX\times_T\cH_{k,d}^{\mathrm{n}}$.
Denote by $\pi_{k,d}: (\cX\times_T\cH_{k,d}^{\mathrm{n}}, \cD\times_T \cH_{k,d}^{\mathrm{n}})\to \cH_{k,d}^{\mathrm{n}}$
the projection, then $\pi_{k,d}$ provides a $\bQ$-Gorenstein flat family of klt pairs.

Following the notation of Proposition \ref{fieldext}, assume $h$ is 
scheme-theoretic point of $\cH_{k,d}^{\mathrm{n}}$ lying over 
$[\fb]\in\cH_{k,d}$. Denote by $t=\tau_{k,d}([\fb])\in T$. By 
construction, the ideal sheaf $\fb_{k,d,h}$ on $\cX\times_{T}
\Spec(\kappa(h))$ is the pull back of  $\fb$ under the flat base 
change $\Spec(\kappa(h))\to\Spec(\kappa([\fb]))$. Hence 
$$\lct((\cX,\cD)\times_{T}\Spec(\kappa(h));\fb_{k,d,h})=
\lct((\cX,\cD)\times_{T}\Spec(\kappa([\fb]));\fb).$$
For simplicity, we abbreviate the above equation to $\lct(\fb_{k,d,h})=\lct(\fb)$.
Applying Proposition \ref{lctsemicont} to the family $\pi_{k,d}$ and the ideal $\fb_{k,d}$ implies that the function $\Phi^{\mathrm{n}}:\cH_{k,d}^{\mathrm{n}}\to\bR_{>0}$ defined as
$\Phi^{\mathrm{n}}(h):=\lct(\fb_{k,d,h})$ is constructible and lower semicontinuous with respect to the Zariski topology on $\cH_{k,d}^{\mathrm{n}}$. Since $\lct(\fb_{k,d,h})=\lct(\fb)$, $\Phi^{\mathrm{n}}$ descends to a function $\Phi$ on $\cH_{k,d}$ as $\Phi([\fb]):=\lct(\fb)$. Since $\cH_{k,d}$ is proper over $T$, we know that the function $\phi:T\to \bR_{>0}$ defined as
\[
\phi( t):=n!\cdot\min_{\substack{ck^n\leq d\leq d_k\\ [\fb]\in\tau_{k,d}^{-1}(t)}}\Phi([\fb])^n
\]
is constructible and lower semicontinuous with respect to the 
Zariski topology on $T$. Then Proposition \ref{fieldext} implies
$\phi(t)=\widehat{\ell_{c,k}}(\sigma(\gt), \cX_{\gt},D_{\gt})$. Thus 
we conclude that $t\mapsto\widehat{\ell_{c,k}}(\sigma(\gt), \cX_{\gt},D_{\gt})$ 
is constructible and lower semicontinuous with respect to the
Zariski topology on $T$.

Let us fix $\epsilon>0$ and a scheme-theoretic point
$o\in T$. By Theorem \ref{thm:convncol}, there exist $c_1>0$ and $N\in \bN$
such that
\begin{equation}\label{eq:mainproof1}
 \hvol(\sgt,\cXgt,\cDgt)\geq \widehat{\ell_{c,k}}(\sgt,\cXgt,\cDgt)-\frac{\epsilon}{2}
\end{equation}
for any $t\in T$, $k$ divisible by $N$ and $0<c\leq c_1$.
Since  $t\mapsto\widehat{\ell_{c,k}}(\sigma(\gt), \cX_{\gt},\cD_{\gt})$
is constructibly lower semicontinuous on $T$, there exists a Zariski open
neighborhood $U$ of $o$ such that
\begin{equation}\label{eq:mainproof2}
 \widehat{\ell_{c,k}}(\sgt,\cXgt,\cDgt)\geq \widehat{\ell_{c,k}}(\sigma(\overline{o}),\cX_{\overline{o}},\cD_{\overline{o}})
 \quad\textrm{for any }t\in U.
\end{equation}
By Theorem \ref{hvolcolength}, there exist $c_0>0$ and $N_0\in \bN$
such that 
\begin{equation}\label{eq:mainproof3}
\widehat{\ell_{c,k}}(\sigma(\overline{o}),\cX_{\overline{o}},\cD_{\overline{o}})
\geq \hvol(\sigma(\overline{o}),\cX_{\overline{o}},\cD_{\overline{o}})-\frac{\epsilon}{2}
\end{equation}
for any $0<c\leq c_0$ and any $k\geq N_0$. Let us 
choose $c=\min\{c_0,c_1\}$ and $k=N\cdot N_0$. Then combining \eqref{eq:mainproof1},
\eqref{eq:mainproof2} and \eqref{eq:mainproof3} yields that
\[
 \hvol(\sgt,\cXgt,\cDgt)\geq\hvol(\sigma(\overline{o}),\cX_{\overline{o}},\cD_{\overline{o}})-\epsilon
 \quad\textrm{for any }t\in U.
\]
The proof is finished.
\end{proof}

The following theorem is a stronger result that implies
Theorem \ref{openk}.

\begin{thm}\label{openkgen}
Let $\varphi:(\cY,\cE)\to T$ be a $\bQ$-Gorenstein flat 
family of log Fano pairs over a field $\bk$ of
characteristic $0$. Assume that some geometric fiber 
$(\cY_{\overline{o}},\cE_{\overline{o}})$ is log K-semistable 
for a point $o\in T$. Then
\begin{enumerate}
\item There exists an intersection
$U$ of countably many Zariski open neighborhoods of $o$,
such that $(\cY_{\gt},\cE_{\gt})$ is log
K-semistable for any  point $t\in T$. If, in addition,
$\bk=\bar{\bk}$ is uncountable, then $(\cY_{t},\cE_{t})$ 
is log K-semistable for a very general closed point $t\in T$.
\item The geometrically log K-semistable locus $$T^{\textrm{K-ss}}:=\{t\in T\colon (\cY_{\gt},\cE_{\gt})\textrm{ is log K-semistable}\}$$ is stable under generalization.
\end{enumerate}
\end{thm}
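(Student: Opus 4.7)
\emph{Plan.} I reduce to the lower semicontinuity of $\hvol$ (Theorem \ref{weaksc}) via the affine cone construction of Theorem \ref{ksscone}. The only substantive ingredient is Theorem \ref{weaksc}; once that is in hand, the rest is routine topology plus the cone dictionary for log Fano pairs.

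\emph{Step 1: Cone family.} Fix $r\in\bN$ such that $\cL:=\cO_{\cY}(-r(K_{\cY/T}+\cE))$ is $\varphi$-very ample; after replacing $r$ by a multiple we may assume $\varphi_*\cL^{\otimes m}$ is locally free and higher direct images vanish for every $m\geq 0$. Form the relative affine cone
\[
\pi:\cX := \mathrm{Spec}_T\bigoplus_{m\geq 0}\varphi_*\cL^{\otimes m}\longrightarrow T,
\]
equipped with its cone-vertex section $\sigma:T\to\cX$ and the $\bQ$-divisor $\cD$ on $\cX$ corresponding to $\cE$. The standard cone dictionary for log Fano pairs shows that $\pi:(\cX,\cD)\to T$ together with $\sigma$ is a $\bQ$-Gorenstein flat family of klt singularities whose fiber over $t$ is exactly the affine cone of $(\cY_{\gt},\cE_{\gt})$ with respect to $\cL_{\gt}$.

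\emph{Step 2: Proof of (1).} By Theorem \ref{ksscone}, for every $t\in T$
\[
\hvol(\sigma(\gt),\cX_{\gt},\cD_{\gt}) \;\leq\; V\;:=\; r^{-1}\bigl(-K_{\cY_{\gt}}-\cE_{\gt}\bigr)^{n-1},
\]
with equality if and only if $(\cY_{\gt},\cE_{\gt})$ is log K-semistable, and the intersection number $V$ is locally constant in $t$ because it is computed by intersection numbers in a flat family of polarized varieties. By hypothesis equality holds at $\overline{o}$, so $\hvol(\sigma(\overline{o}),\cX_{\overline{o}},\cD_{\overline{o}})=V$. Applying Theorem \ref{weaksc} to $\pi:(\cX,\cD)\to T$ with its section $\sigma$, the function $t\mapsto\hvol(\sigma(\gt),\cX_{\gt},\cD_{\gt})$ is Zariski lower semicontinuous on $T$; hence for each integer $m\geq 1$ the set
\[
U_m\;:=\;\bigl\{t\in T : \hvol(\sigma(\gt),\cX_{\gt},\cD_{\gt})>V-1/m\bigr\}
\]
is a Zariski open neighborhood of $o$. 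On $U:=\bigcap_{m\geq 1}U_m$ we obtain the reverse inequality $\hvol(\sigma(\gt),\cX_{\gt},\cD_{\gt})\geq V$, which combined with the upper bound forces equality and hence log K-semistability of $(\cY_{\gt},\cE_{\gt})$ for every $t\in U$. When $\bk=\bar{\bk}$ is uncountable and (after restricting to the connected component of $T$ containing $o$) $T$ is irreducible, each $U_m$ is Zariski dense, and $T\setminus U=\bigcup_m (T\setminus U_m)$ is a countable union of proper closed subsets, so $U$ contains a very general set of closed points.

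\emph{Step 3: Proof of (2).} Every Zariski open neighborhood of $o$ contains every generalization $s$ of $o$: indeed, if $s\notin U_m$, then the closed set $T\setminus U_m$ would contain $\overline{\{s\}}\ni o$, contradicting $o\in U_m$. Consequently $U=\bigcap_m U_m$ contains every generalization of $o$, and the geometrically log K-semistable locus $T^{\textrm{K-ss}}$ is stable under generalization. The main (and only) obstacle is Step 2's appeal to Theorem \ref{weaksc}; the remaining steps are formal consequences of the cone construction and elementary point-set manipulations in the Zariski topology.
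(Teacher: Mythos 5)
Your proposal is correct and follows essentially the same route as the paper: form the relative affine cone with its vertex section, use Theorem \ref{ksscone} to translate log K-semistability of the fibers into the equality $\hvol = r^{-1}(-K_{\cY_{\gt}}-\cE_{\gt})^{n-1}$ (the right-hand side being constant in the family), and apply the lower semicontinuity of Theorem \ref{weaksc} to produce the countable intersection of open neighborhoods; part (2) then follows from the same point-set observation that generalizations lie in every open neighborhood. No gaps.
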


\begin{proof}
(1) For $r\in\bN$ satisfying $\cL=-r(K_{\cY/T}+\cE)$
is Cartier, we define the \emph{relative affine cone} $\cX$ of $(\cY,\cL)$ by
\[
 \cX:=\Spec_{T}\oplus_{m\geq 0}\varphi_*(\cL^{\otimes m}).
\]
Assume $r$ is sufficiently large, then it is easy to see that $\varphi_*(\cL^{\otimes m})$
is locally free on $T$ for all $m\in\bN$. Thus we have 
$\cX_t\cong \Spec\oplus_{m\geq 0}H^0(\cY_t,\cL_t^{\otimes m}):=C(\cY_t,\cL_t)$.
Let $\cD$ be the $\bQ$-divisor on $\cX$ corresponding to $\cE$. 
By \cite[Section 3.1]{Kol13}, the projection $\pi:(\cX,\cD)\to T$ together
with the section of cone vertices $\sigma:T\to\cX$ is
a $\bQ$-Gorenstein flat family of klt singularities.

Since $(\cY_{\overline{o}},\cE_{\overline{o}})$ is K-semistable, Theorem \ref{ksscone} implies
$$\hvol(\sigma(\overline{o}),\cX_{\overline{o}}, \cD_{\overline{o}})=r^{-1}(-K_{\cY_{\overline{o}}}-\cE_{\overline{o}})^{n-1}.$$
Then by Theorem \ref{weaksc}, there exists an intersection $U$ of countably many Zariski open neighborhoods of $o$, such that $\hvol(\sigma(\gt),\cX_{\gt}, \cD_{\gt})\geq \hvol(\sigma(\overline{o}),\cX_{\overline{o}}, \cD_{\overline{o}})$ for any $t\in U$. Since the global volumes of log Fano pairs are constant in $\bQ$-Gorenstein flat families, we have
\[
\hvol(\sigma(\gt),\cX_{\gt}, \cD_{\gt})\geq \hvol(\sigma(\overline{o}),\cX_{\overline{o}}, \cD_{\overline{o}})=
r^{-1}(-K_{\cY_{\overline{o}}}-\cE_{\overline{o}})^{n-1}=r^{-1}(-K_{\cY_{\gt}}-\cE_{\gt})^{n-1}.
\]
Then Theorem \ref{ksscone} implies that $(\cY_{\gt},\cE_{\gt})$ is K-semistable for any $t\in U$.
\medskip

(2) Let $o\in T^{\textrm{K-ss}}$ be a scheme-theoretic point.
Then by (1) there exists countably many Zariski open neighborhoods $U_i$ of $o$ such that $\cap_i U_i\subset T^{\textrm{K-ss}}$.
If $t$ is a generalization of $o$, then $t$ belongs to all Zariski open neighborhoods of $o$, so $t\in T^{\textrm{K-ss}}$.
\end{proof}

\begin{proof}[Proof of Theorem \ref{openk}]
It is clear that (1) and (2) follows from Theorem \ref{openkgen}.
For (3), the constructibility of normalized volumes implies that
the set $U$ in the proof of Theorem \ref{openkgen} (1) can be chosen
as a Zariski open neighborhood of $o$. Then the same argument in the proof of Theorem \ref{openkgen} (1)
works.
\end{proof}

The following corollary is a stronger result that implies
Corollary \ref{specialdeg}.

\begin{cor}
 Let $\pi:(\cY,\cE)\to T$ be a $\bQ$-Gorenstein family of complex log Fano pairs. Assume that $\pi$ is isotrivial over a Zariski open subset $U\subset T$, and $(\cY_o,\cE_o)$ is log K-semistable for a closed point $o\in T\setminus U$. Then $(\cY_t,\cE_t)$ is log K-semistable for any $t\in U$.
\end{cor}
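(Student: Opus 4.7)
The plan is to reduce the statement to Theorem \ref{openkgen}(2), which guarantees that the geometrically log K-semistable locus $T^{\textrm{K-ss}}$ is stable under generalization. Since $T$ is irreducible and $U\subset T$ is nonempty Zariski open, the generic point $\eta$ of $T$ lies in $U$ and is a generalization of every closed point, in particular of $o$. From $o\in T^{\textrm{K-ss}}$ one then immediately deduces $\eta\in T^{\textrm{K-ss}}$; in other words, the geometric generic fiber $(\cY_{\bar{\eta}},\cE_{\bar{\eta}})$ is log K-semistable.

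Next I would exploit isotriviality over $U$ in its standard form: there exists a surjective \'etale morphism $f\colon U'\to U$ together with a complex log Fano pair $(Y_0,E_0)$ such that $(\cY,\cE)\times_U U' \cong (Y_0,E_0)\times_{\bC} U'$. Passing to the generic point of $U'$, whose residue field has algebraic closure $\overline{\bC(T)}$, identifies $(\cY_{\bar{\eta}},\cE_{\bar{\eta}})$ with $(Y_0,E_0)\times_\bC \Spec(\overline{\bC(T)})$. Using the corollary stated right after Proposition \ref{fieldext} on geometrically log K-semistable pairs, log K-semistability descends from the algebraically closed extension back to $(Y_0,E_0)$ itself, so $(Y_0,E_0)$ is log K-semistable over $\bC$.

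Finally, for any closed point $t\in U$ I pick a closed point $t'\in U'$ mapping to $t$. Since $f$ is \'etale and $\kappa(t)=\bC$, the residue field $\kappa(t')$ is also $\bC$, and the fiber of the pulled-back family over $t'$ is canonically identified with $(\cY_t,\cE_t)$. But this same fiber is $(Y_0,E_0)$ by construction, so $(\cY_t,\cE_t)\cong (Y_0,E_0)$ is log K-semistable.

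The principal subtlety, and the main obstacle, lies in the field-of-definition bookkeeping: one must verify that the trivializing pair $(Y_0,E_0)$ coming from the \'etale descent data is genuinely a pair over $\bC$ (not merely over some finite extension), and that the identification of geometric generic fibers respects the boundary $\bQ$-divisor $\cE$. An alternative route, avoiding (2) of Theorem \ref{openkgen}, is to apply Theorem \ref{openk}(1) directly to obtain log K-semistability at a \emph{very general} closed point $t_0\in T$ (possible since $\bC$ is uncountable); one can choose $t_0\in U$ because $U$ is a dense open subset, and then isotriviality over $U$ transports log K-semistability from $t_0$ to every closed $t\in U$ via the isomorphism $(\cY_t,\cE_t)\cong (\cY_{t_0},\cE_{t_0})$.
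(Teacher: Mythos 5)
Your ``alternative route'' at the end is precisely the paper's proof: by Theorem \ref{openkgen}(1), $(\cY_t,\cE_t)$ is log K-semistable for a very general closed point of $T$; since $U$ is a dense Zariski open subset, some such point lies in $U$; and isotriviality over $U$ then transports log K-semistability to every closed point of $U$. That two-line argument is complete and is what you should lead with.

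Your primary route is more elaborate and carries a hypothesis you have not justified: you read ``isotrivial over $U$'' as ``trivialized by a surjective \'etale cover $U'\to U$,'' whereas the weaker (and here sufficient) reading is simply that all closed fibers over $U$ are mutually isomorphic. Upgrading fiberwise isomorphy to \'etale-local triviality is not automatic for log Fano pairs (the automorphism groups need not be finite, so the usual descent argument does not apply off the shelf), and the subsequent field-of-definition bookkeeping for $(Y_0,E_0)$ that you flag yourself is exactly the kind of issue this stronger reading creates. The passage through Theorem \ref{openkgen}(2) and the geometric generic fiber is also unnecessary: once you know a single closed fiber over $U$ is log K-semistable, the fiberwise isomorphism finishes the proof without any descent of K-semistability along field extensions. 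So: the argument you relegate to a remark is the correct and economical one; the argument you present as primary is not wrong in spirit but rests on an interpretation of isotriviality stronger than what is needed or stated.
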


\begin{proof}
Since $(\cY_o,\cE_o)$ is log K-semistable, Theorem \ref{openkgen} implies that $(\cY_t,\cE_t)$ is log K-semistable for very general closed point $t\in T$. Hence there exists (hence any) $t\in U$ such that $(\cY_t,\cE_t)$ is log K-semistable.
\end{proof}

\begin{rem}
 If the ACC of normalized volumes (in bounded families)
 were true, then Conjecture \ref{mainconj} follows by applying Theorem
 \ref{mainthm}. Moreover, we suspect that a much stronger result 
 on discreteness of normalized volumes away from $0$ (see also \cite[Question 4.3]{LiuX17}) might be true, but we don't
 have much evidence yet.
\end{rem}

\subsection{Applications}\label{sec_appl}
In this section we present applications of Theorem \ref{mainthm}. The
following theorem generalizes the inequality part of \cite[Theorem A.4]{LiuX17}.
\begin{thm}\label{maxhvol}
Let $x\in (X,D)$ be a complex klt singularity of dimension $n$. Let
$a$ be the largest coefficient of components of $D$ containing $x$. Then
$\hvol(x,X,D)\leq (1-a)n^n$.
\end{thm}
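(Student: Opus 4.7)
The plan is to deduce the bound directly from the lower semicontinuity of $\hvol$ (Theorem~\ref{mainthm}), applied to the trivial family $X \times \tilde T \to \tilde T$ equipped with a moving section linking $x$ to a generic simple point of the heaviest component of $D$.

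\textbf{Step 1 (bound at a simple point).} First I exhibit, at a suitably generic $y$, an explicit valuation with $\hvol_{y,(X,D)}(v_0) = (1-a)n^n$. Assuming $a>0$, let $D_1$ be a component of $D$ with coefficient $a$ containing $x$ and pick $y$ in the open subset
\[
D_1^\circ := D_1 \cap X_{\mathrm{sm}} \cap D_{1,\mathrm{sm}} \setminus \bigcup_{D_j \neq D_1} \Supp D_j.
\]
Choose regular parameters $u_1,\dots,u_n \in \fm_{y,X}$ with $u_1$ cutting out $D_1$ locally at $y$, and consider the monomial valuation $v_0 \in \Val_{y,X}$ defined by $v_0(u_1)=1$ and $v_0(u_i)=1-a$ for $i \geq 2$ (all weights positive since $a<1$ by the klt hypothesis). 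A direct calculation gives
\[
A_{X,D}(v_0) = \bigl(1+(n-1)(1-a)\bigr) - a = n(1-a), \qquad \vol(v_0) = \frac{1}{(1-a)^{n-1}},
\]
so $\hvol_{y,(X,D)}(v_0) = (1-a)n^n$, hence $\hvol(y,X,D) \leq (1-a)n^n$. If no component of $D$ contains $x$ then $a=0$; one instead picks $y \in X_{\mathrm{sm}}\setminus \Supp D$ and uses the standard weights $(1,\dots,1)$ to obtain the same bound.

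\textbf{Step 2 (connecting curve and family).} Next I choose an irreducible curve $C \ni x$ whose general point is of this type. When $a>0$, since $D_1$ is irreducible of dimension $n-1$ and contains $x$, I may take $C \subset D_1$ through $x$ with generic point in $D_1^\circ$; the complement of $D_1^\circ$ in $D_1$ is a proper closed subset, because $X_{\mathrm{sing}}$ has codimension $\geq 2$ in $X$, each $D_j \cap D_1$ for $j \neq 1$ has codimension $\geq 1$ in $D_1$, and $D_{1,\mathrm{sing}}$ has codimension $\geq 1$ in $D_1$. When $a=0$, take $C \subset X$ through $x$ with generic point in $X_{\mathrm{sm}}\setminus \Supp D$. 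Let $\tilde T \to C$ be the normalisation (a smooth, hence normal, curve), let $j\colon \tilde T \to X$ be the induced morphism, and pick $t_0 \in \tilde T$ with $j(t_0) = x$. Form the trivial family $\pi \colon (\cX,\cD) := (X \times \tilde T, D \times \tilde T) \to \tilde T$ with section $\sigma(t) = (j(t),t)$; this is a $\bQ$-Gorenstein flat family of klt singularities because the family is trivial and $(X,D)$ is klt.

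\textbf{Step 3 (lower semicontinuity).} By construction $\hvol(\sigma(t_0),\cX_{t_0},\cD_{t_0}) = \hvol(x,X,D)$, while for every $t$ in the dense open $U \subset \tilde T$ where $j(t) \in D_1^\circ$ (or $j(t) \in X_{\mathrm{sm}}\setminus \Supp D$ when $a=0$) we have $\hvol(\sigma(t),\cX_t,\cD_t) = \hvol(j(t),X,D) \leq (1-a)n^n$ by Step 1. Theorem~\ref{mainthm} asserts that $t \mapsto \hvol(\sigma(t),\cX_t,\cD_t)$ is lower semicontinuous on $\tilde T$, so the Zariski closed locus $\{t : \hvol(\sigma(t),\cX_t,\cD_t) \leq (1-a)n^n\}$ contains the dense set $U$ and hence all of $\tilde T$; in particular it contains $t_0$, yielding $\hvol(x,X,D) \leq (1-a)n^n$. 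The heavy lifting is all in Theorem~\ref{mainthm}; the only additional care is the codimension count guaranteeing the existence of $C$, which is routine.
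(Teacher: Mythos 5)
Your proposal is correct and follows essentially the same route as the paper: reduce to a very general point of the heaviest component via Theorem~\ref{mainthm} applied to a trivial product family with a moving section, then evaluate an explicit monomial valuation there (your weights $(1,1-a,\dots,1-a)$ are just a rescaling of the paper's $((1-a)^{-1},1,\dots,1)$, and using the normalization of a curve through $x$ instead of the normalization of $D_1$ itself is an immaterial variation).
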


\begin{proof}
Suppose $D_i$ is the component of $D$ containing $x$ with
coefficient $D$. Let $D_i^{\mathrm{n}}$ be the normalization
of $D_i$. By applying Theorem \ref{mainthm} to $\mathrm{pr}_2:
(X\times D_i^{\mathrm{n}}, D\times D_i^{\mathrm{n}})\to 
D_i^{\mathrm{n}}$ together with the natural diagonal section
$\sigma:D_i^{\mathrm{n}}\to X\times D_i^{\mathrm{n}}$, we have that
$\hvol(x,X,D)\leq\hvol(y, X, D)$ for a very general closed point 
$y\in D_i$. We may pick $y$ to be a smooth point in 
both $X$ and $D$, then $\hvol(x,X,D)\leq \hvol(0,\bA^n,a \bA^{n-1})$ 
where $\bA^{n-1}$ is a coordinate hyperplane of $\bA^n$. Let us take
local coordinates $(z_1,\cdots,z_n)$ of $\bA^n$ such that $\bA^{n-1}=V(z_1)$.
Then the monomial valuation $v_a$ on $\bA^n$ with weights $((1-a)^{-1},1,\cdots,1)$
satisfies $A_{\bA^n}(v)=\frac{1}{1-a}+(n-1)$, $\ord_{v_a}(\bA^{n-1})=\frac{1}{1-a}$
and $\vol(v_a)=(1-a)$. Hence
\[
 \hvol(x,X,D)\leq \hvol_{0,(\bA^n,a\bA^{n-1})}(v_a)=(A_{\bA^n}(v)-a\ord_{v_a}(\bA^{n-1}))^n
 \cdot\vol(v_a)=(1-a)n^n.
\]
The proof is finished.
\end{proof}

\begin{thm}\label{thm:kltvar}
Let $(X,D)$ be a klt pair over $\bC$. Then
\begin{enumerate}
\item The function $x\mapsto \hvol(x,X,D)$ on $X(\bC)$ is lower
semicontinuous with respect to the Zariski topology.
\item Let $Z$ be an irreducible subvariety of $X$. Then for a very general closed point $z\in Z$ we have
\[
\hvol(z,X,D)=\sup_{x\in Z}\hvol(x, X,D).
\]
In particular, there exists a countable intersection $U$ of non-empty Zariski open subsets of $Z$ such that $\hvol(\cdot, X,D)|_U$ is constant. 
\end{enumerate}
\end{thm}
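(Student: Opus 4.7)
The plan is to deduce both parts from Theorem \ref{mainthm} (in the form of Theorem \ref{weaksc}) by exhibiting a single canonical $\bQ$-Gorenstein flat family. Specifically, consider the second projection
\[
\pi := \mathrm{pr}_2 : (X \times X,\, \mathrm{pr}_1^{-1}(D)) \longrightarrow X,
\]
equipped with the diagonal section $\sigma := \Delta_X : X \to X \times X$, $x \mapsto (x,x)$. The base $X$ is normal because $(X,D)$ is klt, and $X \times X$ is an irreducible normal variety whose singular locus has codimension at least $2$ (so that $\mathrm{pr}_1^{-1}(D)$ avoids codimension $1$ singular points). The fiber over $t\in X$ is $(X \times \{t\},\, D \times \{t\}) \cong (X,D)$, which is klt, and $K_{X\times X/X}+\mathrm{pr}_1^{-1}(D)=\mathrm{pr}_1^{*}(K_X+D)$ is $\bQ$-Cartier. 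Thus $\pi$ together with $\sigma$ is a $\bQ$-Gorenstein flat family of klt singularities, and under the identification above, $\sigma(t)\in(X\times X)_t$ corresponds to the point $t\in X$. Consequently
\[
\hvol\bigl(\sigma(\bar t),(X\times X)_{\bar t},\mathrm{pr}_1^{-1}(D)_{\bar t}\bigr)=\hvol(t,X,D)
\]
for every closed $t\in X$, and Theorem \ref{weaksc} yields Zariski lower semicontinuity of $x\mapsto\hvol(x,X,D)$, proving (1).

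For (2), let $Z\subset X$ be an irreducible closed subvariety and set
\[
s:=\sup_{x\in Z}\hvol(x,X,D).
\]
By Theorem \ref{maxhvol} we have $s\leq n^{n}<+\infty$ where $n=\dim X$. Since $\hvol(\cdot,X,D)$ is lower semicontinuous on $X$ by (1), its restriction to $Z$ is lower semicontinuous in the subspace Zariski topology: for each $c\in\bR$,
\[
V_c:=\{z\in Z\colon \hvol(z,X,D)>c\}=Z\cap\{x\in X\colon \hvol(x,X,D)>c\}
\]
is Zariski open in $Z$. For any $c<s$, the set $V_c$ is non-empty by definition of the supremum, hence is a non-empty Zariski open subset of the irreducible variety $Z$. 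Choosing a sequence of rationals $c_k\nearrow s$, define $U:=\bigcap_{k\in\bN}V_{c_k}$. Then $U$ is a countable intersection of non-empty Zariski open subsets of $Z$, and for every $z\in U$ we have $\hvol(z,X,D)\geq c_k$ for all $k$, hence $\hvol(z,X,D)\geq s$. Combined with the trivial upper bound $\hvol(z,X,D)\leq s$, this gives equality on $U$.

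There is no substantial obstacle here: the whole argument is a formal consequence of Theorem \ref{mainthm} once the trivial family $X\times X \to X$ with diagonal section is recognized as the right object to apply it to, together with the standard fact that a lower semicontinuous function on an irreducible variety attains its supremum on a countable intersection of dense opens. The only point requiring any care is verifying that $\mathrm{pr}_2$ together with $\Delta_X$ really is a $\bQ$-Gorenstein flat family of klt singularities in the sense of Section \ref{prelim}, which is immediate from the product structure.
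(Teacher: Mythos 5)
Your proof of part (1) is exactly the paper's argument: apply Theorem \ref{mainthm} to $\mathrm{pr}_2:(X\times X, D\times X)\to X$ with the diagonal section, after checking that this is a $\bQ$-Gorenstein flat family of klt singularities (which it is, for the reasons you give). For part (2) you take a genuinely different, and in fact more economical, route. The paper re-applies Theorem \ref{mainthm} to a second family, namely $\mathrm{pr}_2:(X\times Z^{\mathrm{n}}, D\times Z^{\mathrm{n}})\to Z^{\mathrm{n}}$ over the normalization $Z^{\mathrm{n}}$ of $Z$ (the normalization is needed because the base of a family in the sense of Section \ref{prelim} must be normal), and then reads off the conclusion on $Z^{\mathrm{n}}$. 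You instead deduce (2) formally from (1): the restriction of a Zariski lower semicontinuous function to $Z$ is lower semicontinuous, the superlevel sets $V_c$ for $c<s$ are non-empty and hence dense open in the irreducible variety $Z$, and a very general point of $Z$ lies in $\bigcap_k V_{c_k}$ for $c_k\nearrow s$, where the function equals $s$. This is correct (the finiteness of $s$ via Theorem \ref{maxhvol} is not circular, and over $\bC$ a countable intersection of dense opens is non-empty), and it buys you a slightly cleaner argument that avoids introducing the normalization and a second family; the paper's version, on the other hand, makes (2) independent of (1) and directly produces the relevant family over $Z^{\mathrm{n}}$, which is in the spirit of how the rest of the applications are derived.
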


\begin{proof}
Part (1) follows quickly by applying Theorem \ref{mainthm}
to $\mathrm{pr}_2:(X\times X, D\times X)\to X$
together with the diagonal section $\sigma:X\to X\times X$. 
For part (2), denote by $Z^{\mathrm{n}}$ the normalization of $Z$.
Then the proof follows quickly by applying Theorem \ref{mainthm} to $\mathrm{pr}_2:(X\times Z^{\mathrm{n}},D\times Z^{\mathrm{n}})\to Z^{\mathrm{n}}$ together with the natural diagonal section $\sigma:Z^{\mathrm{n}}\to X\times Z^{\mathrm{n}}$.
\end{proof}

Next we study the case when $X$ is a Gromov-Hausdorff limit
of K\"ahler-Einstein Fano manifolds. Note that the function
$x\mapsto\hvol(x,X)=n^n\cdot\Theta(x,X)$ is lower semicontinuous
with respect to the Euclidean topology on $X$ by \cite{SS17, LX17}.
The following corollary improves this result and follows easily from part (1) of Theorem \ref{thm:kltvar}.

\begin{cor}\label{cor:sctheta}
Let $X$ be a Gromov-Hausdorff limit of K\"ahler-Einstein Fano manifolds. Then the function $x\mapsto\hvol(x,X)=n^n\cdot\Theta(x,X)$ on $X(\bC)$ is lower semicontinuous with respect to the Zariski topology.
\end{cor}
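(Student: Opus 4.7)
\smallskip

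The plan is to reduce Corollary \ref{cor:sctheta} directly to Theorem \ref{thm:kltvar}(1) combined with the identity $\hvol(x,X) = n^n \cdot \Theta(x,X)$ of Li--Xu \cite{LX17}.

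First I would recall that any Gromov--Hausdorff limit $X$ of K\"ahler--Einstein Fano manifolds is, by the work of Donaldson--Sun and subsequent refinements (used in \cite{LX17, SS17}), a normal projective complex variety with klt singularities; in particular $(X,0)$ is a klt pair over $\bC$ in the sense of Section \ref{prelim}. Consequently Theorem \ref{thm:kltvar}(1) applies to $(X,0)$ and yields that the function
\[
X(\bC) \ni x \longmapsto \hvol(x,X)
\]
is lower semicontinuous with respect to the Zariski topology on $X$.

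Next I would invoke the identity $\hvol(x,X) = n^n \cdot \Theta(x,X)$ established by Li and Xu in \cite[Theorem 1.2]{LX17} (which builds on \cite{HS16, SS17}), valid at every closed point $x$ of such a Gromov--Hausdorff limit $X$. Substituting this into the lower semicontinuity statement just obtained immediately gives that $x \mapsto n^n \cdot \Theta(x,X)$, and hence $x\mapsto \Theta(x,X)$, is lower semicontinuous in the Zariski topology. This is strictly stronger than the Euclidean lower semicontinuity noted in \cite{SS17}, since every Zariski open set is Euclidean open.

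There is really no obstacle here beyond citing the correct input: the work is entirely done in Theorem \ref{mainthm}, which feeds into Theorem \ref{thm:kltvar}(1); the only thing to check is that Gromov--Hausdorff limits of K\"ahler--Einstein Fano manifolds are covered by our algebraic framework (klt and defined over $\bC$) and that the Li--Xu identity is applicable pointwise on $X(\bC)$. Both are standard facts from the literature, so the corollary is essentially a formal consequence.
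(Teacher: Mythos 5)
Your proposal is correct and matches the paper's argument: the corollary is deduced from Theorem \ref{thm:kltvar}(1) applied to the klt pair $(X,0)$ (using that Gromov--Hausdorff limits of K\"ahler--Einstein Fano manifolds are normal projective klt varieties), together with the Li--Xu identity $\hvol(x,X)=n^n\cdot\Theta(x,X)$. No issues.
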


The following theorem partially generalizes \cite[Lemma 3.3 and Proposition 3.10]{SS17}.
\begin{thm}\label{ghlimit}
Let $X$ be a Gromov-Hausdorff limit of K\"ahler-Einstein Fano manifolds. Let $x\in X$ be any closed point. Then for any finite quasi-\'etale morphism of singularities $\pi:(y\in Y)\to(x\in X)$, we have 
$\deg(\pi)\leq \Theta(x,X)^{-1}$. In particular, we have
\begin{enumerate}
    \item $|\hat{\pi}_1^{\mathrm{loc}}(X,x)|\leq
    \Theta(x,X)^{-1}$.
    \item For any $\bQ$-Cartier Weil divisor $L$ on $X$, we have
    $\mathrm{ind}(x,L)\leq\Theta(x,X)^{-1}$ where $\mathrm{ind}(x,L)$ denotes the
    Cartier index of $L$ at $x$.
\end{enumerate}
\end{thm}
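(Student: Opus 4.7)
The plan is to reduce everything to the single inequality
\[
\deg(\pi)\cdot\hvol(x,X)\leq n^n,
\]
which, combined with the Li--Xu identity $\hvol(x,X)=n^n\cdot\Theta(x,X)$ valid on Gromov--Hausdorff limits of K\"ahler--Einstein Fano manifolds, immediately gives $\deg(\pi)\leq\Theta(x,X)^{-1}$. Given this main inequality, part~(1) follows by letting $(y\in Y)\to(x\in X)$ be the universal quasi-\'etale cover at $x$, whose degree equals $|\hat{\pi}_1^{\mathrm{loc}}(X,x)|$ (finite by \cite{Xu14}). Part~(2) follows by taking $Y:=\Spec_X\bigoplus_{i=0}^{d-1}\cO_X(iL)$ with $d=\mathrm{ind}(x,L)$, i.e., the cyclic index-one cover of $L$, which is quasi-\'etale of degree $d$ over $X$ in a neighborhood of $x$; since $K_Y=\pi^*K_X$, the singularity at $y$ is again klt.

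To prove the main inequality, first note that $(y\in Y)$ is a klt singularity (klt pulls back under quasi-\'etale covers with $K_Y=\pi^*K_X$), so Theorem~\ref{maxhvol} gives $\hvol(y,Y)\leq n^n$. The core step is the finite-degree comparison
\[
\hvol(y,Y)\;\geq\;\deg(\pi)\cdot\hvol(x,X).
\]
One direction of this comparison is elementary and goes the wrong way: for any $\fm_x$-primary ideal $\fa\subset\cO_{x,X}$, flatness of $\pi$ yields $\e(\fa\cO_{y,Y})=\deg(\pi)\cdot\e(\fa)$, while $K_Y=\pi^*K_X$ yields $\lct_Y(\fa\cO_{y,Y})=\lct_X(\fa)$, so Theorem~\ref{eqhvol} gives $\hvol(y,Y)\leq\deg(\pi)\cdot\hvol(x,X)$. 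For the direction we actually need, the GH-limit hypothesis is essential: the singular K\"ahler--Einstein metric near $x$ lifts via $\pi$ to a Ricci-flat K\"ahler metric in an analytic neighborhood of $y$, whose Bishop--Gromov volume density at $y$ is $\deg(\pi)\cdot\Theta(x,X)$, by counting preimages of small metric balls. Since $Y$ with this lifted metric is again a GH limit of natural covers of the approximating K\"ahler--Einstein Fano manifolds, the Li--Xu identification applies on $Y$ as well and gives $\hvol(y,Y)=n^n\cdot\Theta(y,Y)=n^n\cdot\deg(\pi)\cdot\Theta(x,X)=\deg(\pi)\cdot\hvol(x,X)$. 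Combining this with $\hvol(y,Y)\leq n^n$ produces the desired main inequality.

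The principal obstacle is this core step, i.e., establishing $\hvol(y,Y)\geq\deg(\pi)\cdot\hvol(x,X)$. In algebraic generality this is the open finite degree formula \cite[Conjecture~4.1]{LiuX17}, so some analytic input seems unavoidable and the argument is not purely algebraic. The cleanest route uses the scaling law $\Theta(y,Y)=\deg(\pi)\cdot\Theta(x,X)$ for the volume density under quasi-\'etale covers together with Bishop--Gromov $\Theta(y,Y)\leq 1$ on any Ricci-limit space; alternatively one may cite \cite{LX17} directly. The remainder is formal: Theorem~\ref{maxhvol} bounds $\hvol(y,Y)$ by $n^n$, the Li--Xu identity transfers this to an upper bound on $\deg(\pi)\cdot\hvol(x,X)$, and the universal-cover and index-one-cover constructions deliver corollaries~(1) and~(2).
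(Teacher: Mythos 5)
Your proposal is correct and follows essentially the same route as the paper: the key input is the finite degree formula $\hvol(y,Y)=\deg(\pi)\cdot\hvol(x,X)$, which the paper simply cites as \cite[Theorem 1.7]{LX17} — your attempted analytic sketch of it (in particular the assertion that the local cover $Y$ is itself a Gromov--Hausdorff limit of covers of the approximating manifolds) is not rigorous and is really the content of that theorem, but your stated fallback of citing \cite{LX17} directly is exactly what the paper does — combined with $\hvol(y,Y)\leq n^n$ from Theorem \ref{maxhvol} and the identity $\hvol(x,X)=n^n\cdot\Theta(x,X)$ from \cite[Corollary 3.7]{LX17}. Your derivations of (1) and (2) via the universal quasi-\'etale cover and the cyclic index-one cover are the standard constructions that the paper leaves implicit.
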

\begin{proof}
 By \cite[Theorem 1.7]{LX17}, the finite degree formula holds for $\pi$, i.e.
 $\hvol(y,Y)=\deg(\pi)\cdot\hvol(x,X)$. Since $\hvol(y,Y)\leq n^n$ by 
 \cite[Theorem A.4]{LiuX17} or Theorem \ref{maxhvol} and $\hvol(x,X)
 =n^n\cdot\Theta(x,X)$ by \cite[Corollary 3.7]{LX17}, we have
 $\deg(\pi)\leq n^n/\hvol(x,X)=\Theta(x,X)^{-1}$.
\end{proof}

\begin{rem}\label{r_localpi1}
If the finite degree formula \cite[Conjecture 4.1]{LiuX17} were
true for any klt singularity, then clearly $\deg(\pi)\leq n^n/\hvol(x,X)$ holds for any 
finite quasi-\'etale morphism $\pi: (y,Y)\to (x,X)$ between $n$-dimensional klt singularities. 
In particular, we would get an effective upper bound 
$|\hat{\pi}_1^{\mathrm{loc}}(X,x)|\leq n^n/\hvol(x,X)$ where
$\hat{\pi}_1^{\mathrm{loc}}(X,x)$ is known to be finite
by \cite{Xu14, Bgo17} (see \cite[Theorem 1.5]{LiuX17} for a
partial result in dimension $3$).
\end{rem}

\begin{thm}
Let $V$ be a K-semistable complex $\bQ$-Fano variety of dimension $(n-1)$. Let $q$ be the largest integer such that there exists a Weil divisor $L$ satisfying $-K_V\sim_{\bQ}qL$. Then 
\[
q\cdot(-K_V)^{n-1}\leq n^n.
\]
\end{thm}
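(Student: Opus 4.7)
The plan is to realize $q\cdot(-K_V)^{n-1}$ as the normalized volume of an explicit klt singularity, and then apply the universal bound $\hvol\leq n^n$ from Theorem \ref{maxhvol}. Let $L$ be a Weil divisor with $-K_V\sim_{\bQ}qL$, and form the affine cone $x\in X:=C(V,L)=\Spec\bigoplus_{m\geq 0}H^0(V,\lfloor mL\rfloor)$. Since $V$ is klt and $-K_V\sim_{\bQ}qL$, the vertex $x\in X$ is an $n$-dimensional klt singularity.

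The canonical $\bC^\times$-induced valuation $v_0$ at the vertex (coming from the $\bZ$-grading of the section ring) satisfies $A_X(v_0)=q$ and $\vol(v_0)=L^{n-1}=(-K_V)^{n-1}/q^{n-1}$, so
\[
\hvol_{x,X}(v_0)=A_X(v_0)^n\cdot\vol(v_0)=q^n\cdot\frac{(-K_V)^{n-1}}{q^{n-1}}=q\cdot(-K_V)^{n-1}.
\]
I claim K-semistability of $V$ forces $\hvol(x,X)=\hvol_{x,X}(v_0)$. When $L$ is Cartier, this is the equality case of Theorem \ref{ksscone} applied with $r=1/q$. When $L$ is only $\bQ$-Cartier, let $k_0$ be a positive integer such that $-k_0K_V$ is Cartier, and consider the auxiliary cone $x_c\in X_c:=C(V,-k_0K_V)$. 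Theorem \ref{ksscone} applies directly there with $r=k_0$ and yields $\hvol(x_c,X_c)=k_0^{-1}(-K_V)^{n-1}$. The natural map $X\to X_c$ is a cyclic quasi-\'etale cover of degree $k_0q$ induced by the finite subgroup $\mu_{k_0q}\subset\bC^\times$, under which the canonical valuations on $X$ and $X_c$ are related by explicit scaling of log discrepancies and volumes, producing $\hvol(x,X)=k_0q\cdot\hvol(x_c,X_c)=q\cdot(-K_V)^{n-1}$.

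Finally, since $x\in X$ is an $n$-dimensional klt singularity, Theorem \ref{maxhvol} gives $\hvol(x,X)\leq n^n$, hence $q\cdot(-K_V)^{n-1}\leq n^n$. The main technical hurdle is the scaling identity in the non-Cartier case: this is not an instance of the (conjectural) finite degree formula \cite[Conjecture 4.1]{LiuX17} for general quasi-\'etale covers, but rather follows from a direct computation on cones, since both $X$ and $X_c$ carry explicit canonical valuations and the $\mu_{k_0q}$-action permutes the relevant data in a transparent way, reducing the identity to an elementary intersection-theoretic verification.
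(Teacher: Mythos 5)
Your construction (the orbifold cone $X=C(V,L)$ with vertex $x$), your computation $A_X(v_0)=q$, $\vol(v_0)=(L^{n-1})=(-K_V)^{n-1}/q^{n-1}$, and your final appeal to $\hvol(x,X)\leq n^n$ all match the paper. The gap is in the one step that carries the real content: showing that the canonical valuation actually \emph{minimizes} $\hvol_{x,X}$, i.e.\ that $\hvol(x,X)\geq q\cdot(-K_V)^{n-1}$ (the upper bound $\hvol(x,X)\leq q\cdot(-K_V)^{n-1}$ is immediate from your computation but useless for the conclusion). First, your ``Cartier case'' invocation of Theorem \ref{ksscone} with $r=1/q$ is outside that theorem's hypotheses, which require $r\in\bN$; for $q>1$ the polarization $L\sim_{\bQ}-\tfrac{1}{q}K_V$ is not of the stated form. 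Second, and more seriously, your reduction of the general case to the Cartier case via the degree-$k_0q$ cyclic quasi-\'etale quotient $X\to X_c=C(V,-k_0K_V)$ requires exactly the inequality $\hvol(x,X)\geq k_0q\cdot\hvol(x_c,X_c)$, which is the \emph{hard} direction of the finite degree formula \cite[Conjecture 4.1]{LiuX17} (the easy direction, obtained by pulling back the minimizer from $X_c$, gives only ``$\leq$''). Establishing ``$\geq$'' requires knowing that the infimum over all of $\Val_{x,X}$ can be computed by $\mu_{k_0q}$-invariant (indeed $\bC^\times$-equivariant) valuations; this is precisely the content of the equivariant volume-minimization theorems of \cite{Li15b,LX16} and is not an ``elementary intersection-theoretic verification.'' Asserting it without proof leaves the argument circular in spirit: the statement you defer is essentially equivalent to the theorem you are trying to avoid citing.

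The paper closes this gap differently: it passes to the partial resolution $\tilde{X}=\Spec_V\oplus_{m\geq0}\cO_V(\lfloor mL\rfloor)\to X$ with exceptional divisor $V_0\cong V$, observes via \cite{Kol04} that $V_0$ is a Koll\'ar component over $x\in X$ which is K-semistable by hypothesis, and then invokes \cite[Theorem A]{LX16}, which says precisely that the divisorial valuation attached to a K-semistable Koll\'ar component minimizes the normalized volume. That theorem applies directly to the orbifold cone with $L$ only $\bQ$-Cartier, so no auxiliary cover and no finite degree formula are needed. To repair your argument you should either cite the Koll\'ar-component criterion as the paper does, or cite the general Fano cone version of \cite[Proposition 4.6]{LX16} (which allows arbitrary $\bQ$-Cartier polarizations), rather than the restricted statement of Theorem \ref{ksscone}.
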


\begin{proof}
 Consider the orbifold cone $X:=C(V,L)=\Spec(\oplus_{m\geq 0}H^0(V,
 \cO_V(\lfloor m L\rfloor))$ with the cone vertex $x\in X$.
 Let $\tilde{X}:=\Spec_V\oplus_{m\geq 0}\cO_V(\lfloor mL\rfloor)$ 
 be the partial resolution of $X$ with exceptional divisor $V_0$.
 Then by \cite[40-42]{Kol04}, $x\in X$ is a klt singularity, and $(V_0,0)\cong (V,0)$
 is a K-semistable Koll\'ar component over $x\in X$. Hence \cite[Theorem A]{LX16} implies
 that $\ord_{V_0}$ minimizes $\hvol_{x,X}$. By \cite[40-42]{Kol04}
 we have $A_X(\ord_{V_0})=q$, $\vol(\ord_{V_0})=(L^{n-1})$. Hence
 $$\hvol(x,X)=A_X(\ord_{V_0})^n\vol(\ord_{V_0})=q^n(L^{n-1})=q(-K_V)^{n-1},$$
 and the proof is finished since $\hvol(x,X)\leq n^n$ by \cite[Theorem A.4]{LiuX17}
 or Theorem \ref{maxhvol}.
 \end{proof}

\appendix

\section{Asymptotic lattice points counting in convex bodies}
\label{app:lattice}

In this appendix, we will prove the following proposition.

\begin{prop}\label{convexgeo}
For any positive number $\epsilon$, there exists
$k_0=k_0(\epsilon, n)$ such that for any closed convex body
$\Delta\subset [0,1]^n$ and any integer $k\geq k_0$, we have
\begin{equation}\label{ineq_convexgeo}
\left|\frac{\#(k\Delta\cap\bZ^n)}{k^n}-\vol(\Delta)\right|\leq\epsilon.
\end{equation}
\end{prop}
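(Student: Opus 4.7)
The plan is to prove a quantitative estimate of the form
$$\bigl|\#(k\Delta\cap\bZ^n) - k^n\vol(\Delta)\bigr| \leq C_n\, k^{n-1}$$
with a constant $C_n$ depending only on $n$, from which Proposition \ref{convexgeo} follows at once by taking $k_0 := \lceil C_n/\epsilon\rceil$. The uniformity in $\Delta$ comes from the fact that every convex body $\Delta \subseteq [0,1]^n$ has quermassintegrals dominated by those of the unit cube.

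I would use the standard cube-covering trick: to each $p \in k\Delta\cap\bZ^n$ attach the half-open unit cube $C_p := p + [0,1)^n$, so that
$$\#(k\Delta\cap\bZ^n) = \vol\left(\bigcup_{p\in k\Delta\cap\bZ^n} C_p\right).$$
A direct check shows that the symmetric difference between $k\Delta$ and this disjoint union is contained in the $\sqrt{n}$-neighborhood $N_{\sqrt{n}}(\partial(k\Delta))$ of $\partial(k\Delta)$: indeed, $\mathrm{diam}([0,1]^n)=\sqrt{n}$, and if $x\in k\Delta$ has componentwise floor $\lfloor x\rfloor\notin k\Delta$ then the segment from $x$ to $\lfloor x\rfloor$ must cross $\partial(k\Delta)$. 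Hence
$$\bigl|\#(k\Delta\cap\bZ^n) - k^n\vol(\Delta)\bigr| \leq \vol\bigl(N_{\sqrt{n}}(\partial(k\Delta))\bigr).$$

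Since $k\Delta$ is convex, Steiner's formula writes $\vol(k\Delta+sB)$, where $B$ denotes the closed unit Euclidean ball, as a polynomial in $s$ whose coefficients are the quermassintegrals $W_i(k\Delta) = k^{n-i}W_i(\Delta)$. Monotonicity of quermassintegrals under inclusion of convex bodies, together with $\Delta\subseteq[0,1]^n$, yields $W_i(\Delta)\leq W_i([0,1]^n)$, a number depending only on $n$. Combining this with $\frac{d}{ds}\vol(k\Delta+sB) = S(k\Delta+sB)$ (where $S$ denotes surface area) and the monotonicity of $S(k\Delta+sB)$ in $s$, one obtains a bound of the form
$$\vol\bigl(N_{\sqrt{n}}(\partial(k\Delta))\bigr) \leq 2\sqrt{n}\cdot S(k\Delta+\sqrt{n}B) \leq C_n\, k^{n-1}.$$
The degenerate case in which the inner parallel body $k\Delta\ominus\sqrt{n}B$ is empty is handled separately: in this case the inradius of $k\Delta$ is less than $\sqrt{n}$, and a classical inradius-versus-width inequality implies that $\vol(k\Delta)$ is itself $O_n(k^{n-1})$, so the desired bound holds trivially.

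Dividing through by $k^n$ gives $|k^{-n}\#(k\Delta\cap\bZ^n) - \vol(\Delta)| \leq C_n/k$, which is at most $\epsilon$ whenever $k\geq \lceil C_n/\epsilon\rceil$. The main technical point, where the hypothesis $\Delta\subseteq[0,1]^n$ is essential, is to secure constants in Steiner's formula that depend only on $n$; once that is in hand, the rest of the argument is a routine exercise in convex geometry.
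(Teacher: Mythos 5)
Your argument is correct, and it takes a genuinely different route from the paper. The paper proceeds by induction on $n$: it slices $\Delta$ by the hyperplanes $\{x_n=t\}$, applies the inductive hypothesis uniformly to the slices $\Delta_t\subset[0,1]^{n-1}$, and then uses the Brunn--Minkowski theorem to see that $t\mapsto\vol(\Delta_t)$ is unimodal, so that the Riemann sum over $t\in\frac1k\bZ$ approximates $\int\vol(\Delta_t)\,dt$ to within $O(1/k)$ (this last step is the paper's Proposition \ref{integration} for monotone functions). Your proof instead establishes the stronger quantitative bound $\bigl|\#(k\Delta\cap\bZ^n)-k^n\vol(\Delta)\bigr|\le C_n k^{n-1}$ in one pass: the half-open cube decomposition correctly reduces the error to the volume of the $\sqrt n$-neighborhood of $\partial(k\Delta)$, and the Steiner formula together with monotonicity of quermassintegrals under the inclusion $\Delta\subseteq[0,1]^n$ (plus the coarea bound $\vol(N_s(\partial K))\le 2s\,S(K+sB)$ and your separate treatment of small inradius) controls that neighborhood by $O_n(k^{n-1})$. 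What each approach buys: yours yields an explicit convergence rate $C_n/k$ and avoids induction, at the cost of invoking more convex-geometry machinery (mixed volumes, monotonicity of quermassintegrals, Steinhagen-type inradius--width comparison); the paper's argument is weaker (no rate) but essentially self-contained, needing only Brunn--Minkowski and an elementary Riemann-sum estimate. Both are complete proofs of the proposition as stated.
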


\begin{proof}
We do induction on dimensions. If $n=1$, then $k\Delta$ is a closed
interval of length $k\vol(\Delta)$, hence we know
\[
 k\vol(\Delta)-1\leq\#(k\Delta\cap\bZ)\leq k\vol(\Delta)+1.
\]
So \eqref{ineq_convexgeo} holds for $k_0=\lceil 1/\epsilon\rceil$.

Next, assume that the proposition is true for dimension $n-1$.
Denote by $(x_1,\cdots,x_n)$ the
coordinates of $\bR^n$. Let $\Delta_t:=\Delta\cap\{x_n=t\}$ be
the sectional convex body in $[0,1]^{n-1}$. Let $[t_{-},t_{+}]$ be
the image of $\Delta$ under the projection onto the last coordinate.
Then we know that $\vol(\Delta)=\int_{t_{-}}^{t_+}\vol(\Delta_t)dt$.
By induction hypothesis, there exists $k_1\in \bN$ such that $$
\vol(\Delta_t)-\frac{\epsilon}{3}\leq \frac{\#(k\Delta_t\cap\bZ^{n-1})}{k^{n-1}}
\leq \vol(\Delta_t)+\frac{\epsilon}{3} \quad \textrm{for any }k\geq k_1.
$$
It is clear that
$$\#(k\Delta\cap\bZ^n)=\sum_{t\in[t_-,t_+]\cap\frac{1}{k}\bZ}\#(k\Delta_t\cap\bZ^{n-1}),$$
so for any $k\geq k_1$ we have
\begin{equation}\label{ineq_cg1}
 \left|\#(k\Delta\cap\bZ^n) -k^{n-1}\cdot \sum_{t\in[t_-,t_+]\cap\frac{1}{k}\bZ}\vol(\Delta_t)\right|
 \leq \frac{\epsilon}{3} k^{n-1}\cdot\#([t_-,t_+]\cap\frac{1}{k}\bZ)\leq \frac{2\epsilon}{3} k^n.
\end{equation}

Next, we know that the function $t\mapsto\vol(\Delta_t)^{1/{(n-1)}}$ is concave on $[t_-,t_+]$ by the Brunn-Minkowski theorem.
In particular, we can find $t_0\in [t_-,t_+]$ such that $g(t):=\vol(\Delta_t)$ reaches 
its maximum at $t=t_0$. Hence $g$ is increasing on $[t_-,t_0]$
and decreasing on $[t_0,t_+]$.
Then applying Proposition \ref{integration} to $g|_{[t_-,t_0]}$
and $g|_{[t_0,t_+]}$ respectively yields
\begin{align*}
 \left|\int_{t_-}^{t_0}\vol(\Delta_t)dt-\frac{1}{k}\sum_{t\in[t_-,t_0]\cap\frac{1}{k}\bZ}\vol(\Delta_t)\right|
\leq \frac{2}{k},\\
\left|\int_{t_0}^{t_+}\vol(\Delta_t)dt-\frac{1}{k}\sum_{t\in[t_0,t_+]\cap\frac{1}{k}\bZ}\vol(\Delta_t)\right|
\leq \frac{2}{k}.
\end{align*}
Since $0\leq \vol(\Delta_{t_0})\leq 1$, we have
\begin{equation}\label{ineq_cg2}
 \left|\int_{t_-}^{t_+}\vol(\Delta_t)dt-\frac{1}{k}\sum_{t\in[t_+,t_-]\cap\frac{1}{k}\bZ}\vol(\Delta_t)\right|
 \leq \frac{5}{k}.
\end{equation}
Therefore, by setting $k_0=\max(k_1,\lceil 15/\epsilon\rceil)$, the 
inequality \eqref{ineq_convexgeo}
follows easily by combining \eqref{ineq_cg1} and \eqref{ineq_cg2}.
\end{proof}

\begin{prop}\label{integration}
 For any monotonic function $g:[a,b]\to [0,1]$ and any $k\in\bN$, we have
 \[
  \left|\int_{a}^b g(s) ds-\frac{1}{k}\sum_{t\in[a,b]\cap\frac{1}{k}\bZ}g(t)\right|\leq\frac{2}{k}.
 \]
\end{prop}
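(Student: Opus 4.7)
The statement is a standard Riemann-sum estimate exploiting monotonicity; there is no genuine obstacle, just a careful accounting. Without loss of generality, I would assume $g$ is non-decreasing (if $g$ is non-increasing, apply the argument to $\tilde g(s):=g(a+b-s)$, which has the same integral and the same value of the sum after reindexing $t \mapsto a+b-t$).

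First, dispose of the degenerate case in which $[a,b]\cap \tfrac1k\bZ = \varnothing$. Any interval of length $\geq 1/k$ contains a point of $\tfrac1k\bZ$, so in this case $b-a<1/k$. Then the sum is empty, and $\bigl|\int_a^b g\bigr|\leq b-a<1/k<2/k$, which is fine.

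Otherwise, write $[a,b]\cap\tfrac1k\bZ=\{t_0<t_1<\cdots<t_N\}$ with $t_{i+1}-t_i=1/k$, and let $S:=\tfrac1k\sum_{i=0}^{N} g(t_i)$ denote the Riemann sum appearing in the statement. The choice of $t_0$ and $t_N$ forces $0\leq t_0-a<1/k$ and $0\leq b-t_N<1/k$. By monotonicity of $g$, on each subinterval $[t_i,t_{i+1}]$ one has $g(t_i)/k \leq \int_{t_i}^{t_{i+1}} g \leq g(t_{i+1})/k$, and summing over $i=0,\dots,N-1$ yields
\[
S-\frac{g(t_N)}{k}\;\leq\;\int_{t_0}^{t_N} g(s)\,ds\;\leq\;S-\frac{g(t_0)}{k}.
\]

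For the boundary pieces, since $g$ is non-decreasing and $0\leq g\leq 1$,
\[
0\;\leq\;\int_a^{t_0} g(s)\,ds\;\leq\;g(t_0)(t_0-a)\;\leq\;\frac{g(t_0)}{k},
\qquad
0\;\leq\;\int_{t_N}^{b} g(s)\,ds\;\leq\;(b-t_N)\;\leq\;\frac{1}{k}.
\]
Adding the three pieces, the upper bound becomes $\int_a^b g \leq S + 1/k$, and combining the lower bounds gives $\int_a^b g \geq S - g(t_N)/k \geq S-1/k$. Hence $\bigl|\int_a^b g - S\bigr|\leq 1/k\leq 2/k$, which is the desired estimate (and actually a factor of two better than what is asserted, which is all we need).
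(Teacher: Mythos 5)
Your proof is correct and follows essentially the same route as the paper's: compare the sum with the integral over consecutive lattice points of spacing $1/k$ using monotonicity, then bound the two boundary pieces separately (you in fact obtain the sharper constant $1/k$, and you also handle the degenerate case of an empty lattice intersection, which the paper leaves implicit). One small quibble: the reflection $\tilde g(s):=g(a+b-s)$ does not literally preserve the sum over $[a,b]\cap\frac{1}{k}\bZ$ unless $k(a+b)\in\bZ$, since $a+b-t$ need not lie in $\frac{1}{k}\bZ$; this is harmless because your main argument only uses that consecutive sample points are $1/k$ apart and that the end gaps are less than $1/k$, so it applies verbatim to the translated lattice (or one simply runs the symmetric argument directly for non-increasing $g$).
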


\begin{proof}
 We may assume that $g$ is an increasing function.
 Denote $a_k:=\frac{\lceil ka\rceil}{k}$ and $b_k:=\frac{\lfloor kb\rfloor}{k}$,
 so $[a,b]\cap\frac{1}{k}\bZ=[a_k,b_k]\cap\frac{1}{k}\bZ$.
 Since $\int_{t-1/k}^{t} g(s)ds\leq g(t)/k$ whenever $t\in [a_k+1/k,b_k]$,
 we have 
 \[
  \int_{a_k}^{b_k}g(s)ds \leq \frac{1}{k}\sum_{t\in[a_k+1/k,b_k]\cap\frac{1}{k}\bZ}g(t)
  \leq \frac{1}{k}\sum_{t\in[a,b]\cap\frac{1}{k}\bZ}g(t),
 \]
Similarly, $\int_{t}^{t+1/k} g(s)ds\geq g(t)/k$ for any $t\in [a_k, 
b_k-1/k]$, we have
 \[
  \int_{a_k}^{b_k}g(s)ds\geq\frac{1}{k}\sum_{t\in[a_k,b_k-1/k]
  \cap\frac{1}{k}\bZ}g(t)\geq\frac{1}{k}\sum_{t\in[a,b]
  \cap\frac{1}{k}\bZ}g(t)-\frac{1}{k}.
 \]
 It is clear that $a_k\in[a, a+1/k]$ and $b_k\in[b-1/k, b]$,
 so we have
 \[
  \int_{a_k}^{b_k}g(s)ds\geq \int_a^bg(s)ds-\frac{2}{k},\qquad
  \int_{a_k}^{b_k}g(s)ds\leq \int_a^bg(s)ds.
 \]
 As a result, we have
 \[
 \frac{1}{k}\sum_{t\in[a,b]  \cap\frac{1}{k}\bZ}g(t)-\frac{1}
 {k}\leq \int_{a}^b g(s)ds\leq \frac{1}{k}\sum_{t\in[a,b]
  \cap\frac{1}{k}\bZ}g(t)+\frac{2}{k}
 \]
\end{proof}

\section{Families of Ideals and the Hilbert--Samuel Function}
\label{app:HS}

The following proposition concerns the behavior of the Hilbert--Samuel function along a family of ideals. 
The statement is not new. The proof we give follows arguments found found in \cite{FM00}. 

\begin{defn}
If $(R,\fm)$ is a local ring and $I$ is an $\fm$-primary ideal, 
then the Hilbert--Samuel function of $I$, denoted $\HS_I: \bN \to \bN$, is  given by $\HS_I(m):= \ell_R( R/ I^m)$.
Note that $\hs(I)= \lim_{n \to \infty} \HS_I(m)/ m^n$, where $n = \dim(R)$. 
 \end{defn}

\begin{prop}\label{prop:hs}
Let $\pi: \cX \to T$ be a morphism of finite type $\bk$-schemes. Assume $T$ is integral and $\pi$ has a section $\sigma: T\to \cX$. If  $\fa \subset \cO_{\cX}$ is an ideal and $\fa_t = \fa \cdot \cO_{\cX_{\st }}$ is $\fm_{\st}$-primary for all $t\in T$, then $T$ has a filtration\[
\emptyset = T_0  \subset T_{1} \subset \cdots T_1 \subset T_m = T \]
 such that for every $1\leq i \leq m$, $T_i$ is closed in $T$ and the function $T_i \setminus T_{i-1}\ni t  \mapsto \HS_{\fa_t}$ is constant. 
\end{prop}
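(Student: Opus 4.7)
The plan is to proceed by Noetherian induction: it suffices to find a nonempty open $U \subseteq T$ on which $t \mapsto \HS_{\fa_t}$ is constant, after which the argument applied to each irreducible component of $T \setminus U$ (with its reduced induced scheme structure) supplies the desired stratification. Working locally, I take $T = \Spec R$ and $\cX = \Spec A$ affine with $R$ an integral domain; the section $\sigma$ then corresponds to a surjection $A \twoheadrightarrow R$ whose kernel $I$ is a prime ideal of $A$. Because $T$ is reduced and each $\sigma(t) \in V(\fa)$, one has $\fa \subseteq I$. Moreover, the $\fm_{\sigma(t)}$-primary condition rules out any irreducible component of $V(\fa)$ strictly containing $\sigma(T)$: such a component would have positive-dimensional generic fiber through $\sigma(\eta)$, contradicting that $\sigma(\eta)$ is isolated in $V(\fa_\eta)$. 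The remaining components of $V(\fa)$ meet $\sigma(T)$ only in proper closed subsets; after shrinking $T$ to remove the (finite) union of their $\sigma$-preimages and then shrinking $\cX$ to an open neighborhood of $\sigma(T)$ disjoint from these components, one arranges $\sqrt{\fa} = I$, whence Noetherianity supplies an integer $N$ with $I^N \subseteq \fa$.

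With $I^N \subseteq \fa$ in hand, each $A/\fa^m$ is a quotient of $A/I^{mN}$, which is finitely generated as an $R$-module (since every $I^j/I^{j+1}$ is finitely generated over $R$). Consequently the associated graded algebra $\gr_\fa A = \bigoplus_{m \geq 0} \fa^m/\fa^{m+1}$ is a finitely generated graded $R$-algebra. The plan is then to invoke the graded version of Grothendieck's generic freeness lemma (following \cite{FM00}) to obtain a single element $f \in R \setminus \{0\}$ such that every graded piece $(\fa^m/\fa^{m+1})_f$ is a free $R_f$-module of some finite rank $r_m$. Then $U := D(f)$ will be the desired open subset.

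Finally, using the short exact sequences $0 \to \fa^m/\fa^{m+1} \to A/\fa^{m+1} \to A/\fa^m \to 0$ inductively on $m$, the freeness (hence $R_f$-flatness) of the graded pieces implies that each $A/\fa^m$ is $R_f$-flat, so that $(\fa^m/\fa^{m+1}) \otimes_R \kappa(t) \cong \fa_t^m/\fa_t^{m+1}$ for all $t \in U$. Since $\sigma$ is a section the residue field at $\sigma(t)$ equals $\kappa(t)$, and $\fa_t^m/\fa_t^{m+1}$ is supported at $\sigma(t)$, so $\ell_{\cO_{\sigma(t),\cX_t}}(\fa_t^m/\fa_t^{m+1}) = r_m$; summing, $\HS_{\fa_t}(m) = \sum_{i<m} r_i$ is independent of $t \in U$. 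The main obstacle is the uniformity in $m$ of the generic freeness statement: a direct application of ordinary generic freeness in each degree produces countably many dense opens whose intersection could be empty, so the graded version providing a single localization that works simultaneously for all degrees is essential.
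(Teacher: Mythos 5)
Your proposal is correct and follows essentially the same route as the paper: reduce by Noetherian induction to finding one dense open, apply generic flatness/freeness to the whole finitely generated associated graded algebra $\gr_\fa$ at once (precisely to avoid the countably-many-opens problem you flag), and then identify $(\fa^m/\fa^{m+1})\otimes\kappa(t)$ with $\fa_t^m/\fa_t^{m+1}$ by the same short-exact-sequence induction that the paper isolates as Lemma \ref{lem:grflat}. Your preliminary shrinking to arrange $\sqrt{\fa}=I$ (hence $I^N\subseteq\fa$ and finite generation of the graded pieces over $R$) is a point the paper leaves implicit, but it does not change the argument.
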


\begin{proof}
To prove the result, it is sufficient to show that there exists a nonempty open set $U\subset T$ such that $\HS_{\fa_t}$ is constant for all $t\in U$. 
We proceed to find such a set $U$. 

For each $t\in T$, we have $
\HS_{\fa_t }(m) 
= \sum_{i=0}^{m-1} \ell ( \fa_t^{i} / \fa_t^{i+1})$. 
Therefore, we consider the 
finitely generated $\cO_X$-algebra $\gr_\fa := \oplus_{i\geq 0 } \fa^i/ \fa^{i+1}$. By generic flatness, we may choose a nonempty open set $U\subset T$ such that both $\cO_X\vert_{\pi^{-1}(U)}$ and  $\gr_\fa\vert_{\pi^{-1}(U)}$ are flat over $U$. 

For each $i \in \bN$, the function $U \ni t \mapsto  \dim_{\kappa(t)}(\fa^i /\fa^{i+1}\vert_t)$ is  constant, since each $\fa^i /\fa^{i+1}$ is flat over $U$ and  $\fa^i /\fa^{i+1}\vert_t$ has zero dimensional support for each $t\in U$.  Since $\kappa(t) \simeq \cO_{\st,\cX}/ \fm_{\st}$,  $\dim_{\kappa(t)}(\fa^i/ \fa^{i+1}\vert_t ) = \ell(\fa^i/ \fa^{i+1} \vert_t )$ for all $t\in T$. Furthermore,  Lemma \ref{lem:grflat},  proved below, implies $ \fa^i/ \fa^{i+1} \vert_t  =  \fa_t^i / \fa_t^{i+1}$ for all $t\in U$. Therefore, $U \ni t \mapsto  \ell ( \fa_t^{i} / \fa_t^{i+1})$ is  constant, and the proof is complete. 
\end{proof}

Before stating the following lemma, we introduce some notation. Let $A$ be a ring, $I\subset A$ an ideal, and $M$ an $A$-module. We set
\[
\gr_I (M) := \bigoplus_{m\geq0} \frac{I^m M}{I^{m+1} M}.\]

\begin{lem}\label{lem:grflat}
Let $B\to A$ be a morphism of  rings, $I\subset A$ an ideal, and $M \in \Mod(A)$. 
If $\gr_I M$ and $M$ are both flat over $B$, then for any $N \in \Mod(B)$
\[
(\gr_I M) \otimes_B N \simeq \gr_I (M \otimes_B N).
\]
\end{lem}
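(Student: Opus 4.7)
The plan is to identify $(\gr_I M) \otimes_B N$ with $\gr_I(M \otimes_B N)$ degree by degree, using the hypothesized $B$-flatness to commute tensor product with the filtration by powers of $I$. The key technical point is to show that, for every $m \geq 0$, the $B$-module $I^m M$ is flat and the natural map
\[
I^m M \otimes_B N \longrightarrow M \otimes_B N
\]
is injective with image $I^m (M \otimes_B N)$. Everything else will follow formally.

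First, I would prove by induction on $m$ that $I^m M$ is $B$-flat. The base case $m=0$ is the hypothesis on $M$. For the inductive step, consider the short exact sequence
\[
0 \longrightarrow I^{m+1} M \longrightarrow I^m M \longrightarrow I^m M / I^{m+1} M \longrightarrow 0.
\]
The right-hand term is a direct summand of $\gr_I M$ (in degree $m$), hence $B$-flat, and $I^m M$ is $B$-flat by the inductive hypothesis; since flatness is preserved by kernels in a short exact sequence whose middle and right terms are flat, $I^{m+1} M$ is $B$-flat. By the same kind of argument applied to the filtration $M \supset IM \supset \cdots \supset I^m M$, whose successive quotients are summands of $\gr_I M$, one sees that $M / I^m M$ is also $B$-flat.

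With both $I^m M$ and $M/I^m M$ known to be $B$-flat, tensoring the exact sequence $0 \to I^m M \to M \to M/I^m M \to 0$ with $N$ shows that $I^m M \otimes_B N \hookrightarrow M \otimes_B N$ is injective. A direct computation on elementary tensors of the form $(b \cdot x) \otimes n$ with $b \in I^m$ and $x \in M$ shows that the image of this injection is exactly $I^m (M \otimes_B N)$, giving the identification
\[
I^m M \otimes_B N \;=\; I^m (M \otimes_B N) \quad \text{inside } M \otimes_B N.
\]

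Finally, I would tensor the sequence $0 \to I^{m+1} M \to I^m M \to I^m M/I^{m+1}M \to 0$ with $N$ over $B$; flatness of the right-hand term keeps the sequence exact, so the previous identification yields
\[
(I^m M / I^{m+1} M) \otimes_B N \;\simeq\; I^m (M \otimes_B N) / I^{m+1}(M \otimes_B N).
\]
Summing over $m \geq 0$ produces the desired isomorphism $(\gr_I M) \otimes_B N \simeq \gr_I(M \otimes_B N)$. The main obstacle is really just the bookkeeping in step one and in the identification of submodules; once $I^m M$ and $M/I^m M$ are established to be $B$-flat, the rest is essentially formal.
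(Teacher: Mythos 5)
Your proposal is correct and follows essentially the same route as the paper: both arguments induct on $m$ using the short exact sequences $0 \to I^{m+1}M \to I^m M \to I^m M/I^{m+1}M \to 0$, the flatness of the graded pieces as summands of $\gr_I M$, and the resulting Tor-vanishing to identify $(I^mM/I^{m+1}M)\otimes_B N$ with the corresponding graded piece of $M\otimes_B N$. The only cosmetic difference is that you obtain the injectivity of $I^mM\otimes_B N \to M\otimes_B N$ from the (also inductively established) flatness of $M/I^mM$, whereas the paper carries the injectivity of the natural surjection $\alpha_m:(I^mM)\otimes_B N\to I^m(M\otimes_B N)$ along in the induction itself; both are valid.
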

\begin{proof} We follow the argument given in \cite{FM00}. 
Consider the surjective map $\alpha_m : (I^{m} M ) \otimes_B N \to I^m ( M \otimes_B N)$.  We claim that, for each $m \in \bZ_{>0}$, $\alpha_m$ is injective and $I^m M$ is flat over $B$. 

In order to prove the claim, we induct on $m$. The claim holds when $m=0$, since $\alpha_0$ is clearly an isomorphism and $M$ is flat over $B$ by assumption. Next, consider the exact sequence
\[0\to 
I^{m+1} M \to I^m M \to I^m M/ I^{m+1} M \to 0\]
and assume the claim holds for a positive integer $m$. 
Since $I^m M$ and $ I^m M/ I^{m+1} M$ are flat over $B$, so is $I^{m+1}M$. By the flatness of $ I^m M/ I^{m+1} M$, we may tensor by $N$ to get an exact sequence 
\[
0\to 
I^{m+1} M  \otimes_B N \to I^m M \otimes_B N \to I^m M/ I^{m+1} M \otimes_B N \to 0
.\]
By the above exact sequence, the injectivity of $\alpha_m$ implies the injectivity of  $\alpha_{m+1}$. Now that the claim has been proven, the lemma follows from applying the claim to the previous short exact sequence. 
\end{proof}

\end{document}